\newcommand{\dc}{\#^\mathrm{dc}}
\newcommand{\e}{\varepsilon}
\newcommand{\Lcal}{\mathcal{L}}
\newcommand{\frk}{\mathfrak}
\newcommand{\ol}{\overline}
\newcommand{\Rb}{\mathbb{R}}
\newcommand{\RTr}{\Rb\mathrm{T}_r}
\newcommand{\Lc}{\mathcal{L}}
\providecommand{\dotdiv}{% Don't redefine it if available
  \mathbin{% We want a binary operation
    \vphantom{+}% The same height as a plus or minus
    \text{% Change size in sub/superscripts
      \mathsurround=0pt % To be on the safe side
      \ooalign{% Superimpose the two symbols
        \noalign{\kern-.35ex}% but the dot is raised a bit
        \hidewidth$\smash{\cdot}$\hidewidth\cr % Dot
        \noalign{\kern.35ex}% Backup for vertical alignment
        $-$\cr % Minus
      }%
    }%
  }%
}
\newcommand*{\approxx}{% 
  \mathrel{\vcenter{\offinterlineskip
  \hbox{$\sim$}\vskip-.35ex\hbox{$\sim$}\vskip-.35ex\hbox{$\sim$}}}}
\theoremstyle{plain}
\newtheorem{thm}{Theorem}[section]
\newtheorem{prop}[thm]{Proposition}
\newtheorem{lem}[thm]{Lemma}
\newtheorem{cor}[thm]{Corollary}
\newtheorem{fact}[thm]{Fact}
\newtheorem{quest}[thm]{Question}
\theoremstyle{definition}
\newtheorem{defn}[thm]{Definition}
\newtheorem{nota}[thm]{Notation}
\theoremstyle{plain}
\providecommand{\customgenericname}{}
\newcommand{\newcustomtheorem}[2]{%
  \newenvironment{#1}[1]
  {%
   \renewcommand\customgenericname{#2}%
   \renewcommand\theinnercustomgeneric{##1}%
   \innercustomgeneric
  }
  {\endinnercustomgeneric}
}
\DeclareRobustCommand{\cset}{\@ifstar\star@cset\normal@cset}
\newcommand{\star@cset}[1]{\left\llbracket#1\right\rrbracket}
\newcommand{\normal@cset}[2][]{\mathopen{#1\llbracket}#2\mathclose{#1\rrbracket}}
\begin{document}

%\relpenalty=10000
%\binoppenalty=10000

\title{Approximate Isomorphism of Metric Structures}
\author{James Hanson}
\address{Department of Mathematics\\
  University of Maryland\\
  College Park, MD 20742, USA}
\email{jhanson9@umd.edu}

% \email{jhanson9@wisc.edu}%{jehanson2@wisc.edu}
%\address%{Department of Mathematics, University of Wisconsin--Madison, 480 Lincoln Dr., Madison, WI 53706}
%\date{\today}

\keywords{continuous logic, approximate isomorphism, infinitary logic, Scott rank, Scott sentence}
\subjclass[2020]{03C66, 03C75, 03C15}

%\subjclass[2020]{03C66}

\begin{abstract}
  We give a formalism for approximate isomorphism in continuous logic simultaneously generalizing those of two papers by Ben Yaacov \cite{OnPert} and by Ben Yaacov, Doucha, Nies, and Tsankov \cite{MSA}, which are largely incompatible. With this we explicitly exhibit Scott sentences for the perturbation systems of the former paper, such as the Banach-Mazur distance and the Lipschitz distance between metric spaces. Our formalism is simultaneously characterized syntactically by a mild generalization of perturbation systems and semantically by certain elementary classes of two-sorted structures that witness approximate isomorphism. As an application, we show that the theory of any $\Rb$-tree or ultrametric space of finite radius is stable, improving a result of Carlisle and Henson~\cite{Carlisle2020}.
\end{abstract}

\maketitle
\vspace{-1em}
\section*{Introduction}

There are many different notions of `approximate isomorphism' in various branches of analysis. The two that are best known are perhaps the Banach-Mazur distance between Banach spaces and the Gromov-Hausdorff distance between metric spaces. These two---as well as their lesser known cousins the Kadets distance between Banach spaces and the Lipschitz distance between metric spaces---seem to have fruitful interaction with continuous logic,\footnote{Some additional, specialized examples, relevant to the model theory of $C^\ast$-algebras, are the completely bounded Banach-Mazur distance in the context of operator spaces and the unnamed distance that induces the weak topology on the class of operator spaces in question \cite{Goldbring2015}.} as explored in \cite{OnPert}, \cite{MSA}, and \cite{NanoThesis} and in \cite{Iovino95stablebanach}\footnote{We should warn the reader that \cite{Iovino95stablebanach}, on the here relevant topic of \emph{uniform structures on type spaces}, points the reader to an in-preparation monograph, \emph{Banach Space Model Theory, I: Basic Concepts and Tools}, that does not seem to exist in any publicly available form.} before the development of the modern formalism of continuous logic. 

In \cite{Iovino95stablebanach}, Iovino discusses the notion of \emph{uniform structures on the space of types} \cite[Def.~2.2]{Iovino95stablebanach} in the context of positive bounded formulas, a seminal precursor formalism to the modern form of continuous logic. In the special case of uniform structures generated by a metric, these amount to the same thing as our induced metrics (Defintion~\ref{defn:induced-metrics-on-type-space}). While it is a fairly routine matter to generalize these ideas to the current formalism of continuous logic, the only published modern account of these ideas is the special case developed in \cite{OnPert}. Aside from committing a fully modern development to print, the primary contributions of this paper are the purely syntactic formalization given in Section~\ref{sec:approx-iso}, the precise semantic, structural characterization shown in Section~\ref{sec:th-approx-iso}, and the infinitary generalizations explored in Section~\ref{sec:Scott-sent}, extending the work of \cite{MSA}. As such, this paper is a synthesis of ideas presented in \cite{OnPert}, \cite{MSA}, and~\cite{Iovino95stablebanach}. 
% What these four distances have in common is that the structures for which they are defined either are or can be encoded as metric structures and in that context they have strong compatibility with ultraproducts. 

In \cite{OnPert}, Ben Yaacov introduces perturbation systems---a broad notion of approximate isomorphism---in order to generalize an unpublished result of  Henson's, specifically a Ryll-Nardzewski type characterization of Banach space theories that are `approximately separably categorical' with regards to the Banach-Mazur distance. Ben Yaacov's formalism requires that approximate isomorphisms be witnessed by uniformly continuous bijections with uniformly continuous inverses. As such, while it comfortably covers the Banach-Mazur and Lipschitz distances, it cannot accommodate the Gromov-Hausdorff or Kadets distances. 

In \cite{MSA}, Ben Yaacov, Doucha, Nies, and Tsankov generalize Scott analysis to a family of continuous analogs of $\Lcal_{\omega_1 \omega}$. Among other results, they use this to exhibit Scott sentences that characterize separable metric structures not only up to isomorphism, but also up to Gromov-Hausdorff or Kadets distance 0. Their formalism does not seem to be able to capture the Banach-Mazur or Lipschitz distances, although the existence of Scott sentences capturing these was shown indirectly by their continuous Lopez-Escobar theorem and results in \cite{ComplexDistNew}. In this paper we will show that with a small modification the formalism of \cite{MSA}, we can give Scott sentences for Banach-Mazur and Lipschitz distances, as well as other `well behaved' notions of approximate isomorphism.

All four of the distances mentioned---Banach-Mazur, Kadets, Gromov-Hausdorff, and Lipschitz---can be expressed in terms of `correlations,' i.e.\ total surjective relations between the structures in question (bijections being a special case of correlations) and some notion of `distortion' that measures how good of an approximation of isomorphism the given correlation is. Our formalism will use this as a starting point, defining  distortion in terms of certain appropriate designated collections of formulas, called `distortion systems.' This is a more syntactic way of looking at something very similar to the objects studied in \cite{OnPert}, but without the requirement that the correlations in question be functions. %with the crucial difference being that there the correlations are required to be bijections, meaning that the formalism there cannot capture the Gromov-Hausdorff or Kadets distances.

After giving three characterizations of our family of notions of approximate isomorphism, we will give an explicit collection of formulas that captures the Banach-Mazur distance, giving a presentation of a result from \cite{Iovino95stablebanach} in a modern formalism. We will extend the result of \cite{MSA} by giving Scott sentences for perturbation systems. And finally, we will use our formalism to show that all rooted $\Rb$-trees of finite radius and all ultrametric spaces of finite radius are stable, improving a result of \cite{Carlisle2020}.
%, something that the formalism of \cite{MSA} seems to be unable to capture without some mild modification.

Despite the limitations of \cite{OnPert} and \cite{MSA} it seems that they each settled on fairly natural formalisms. As we will see there are two well-behaved classes of distortion systems. The first includes the Banach-Mazur and Lipschitz distances and is roughly the same thing as the class of perturbation systems. The second includes the Gromov-Hausdorff and Kadets distances and fits fairly well into the weak modulus formalism of \cite{MSA}. We end the paper with a section detailing a simple pathological example of what can happen outside of these two nice classes.

In a follow-up paper we will generalize the approximate Ryll-Nardzewski theorem of \cite{OnPert} to the context of certain well behaved distortion systems (including in particular the Gromov-Hausdorff and Kadets distances), as well as give a partial Morley's theorem for `inseparably approximately categorical' theories.

For the general formalism of continuous logic and the majority of the notation used here, see \cite{MTFMS}. As opposed to \cite{MTFMS}, however, we (implicitly) opt for the extended definition of formula they allude to in and before Proposition 9.3. Specifically, we allow arbitrary continuous functions from $\mathbb{R}^\omega$ to $\mathbb{R}$ as connectives. This means that formulas may use countably many constants and have countably many free variables, but also, most importantly, that the collection of formulas is closed under uniformly convergent limits up to logical equivalence.  Note that this does not increase the expressiveness of first-order continuous logic, despite the presence of infinitary connectives. In continuous generalizations of $\Lcal_{\omega_1\omega}$, the expressiveness is fundamentally increased in that the infinitary connectives introduced there ($\sup$ and $\inf$ of sequences of formulas) are not continuous.

 Here are the rest of the notational conventions used in this paper.

%\begin{nota}
%Let $(X,d)$ be a metric space. Let $x\in X$, $A \subseteq X$, and $\varepsilon > 0$.
%\begin{itemize}
%    \item[(i)] $B_{\leq \varepsilon}^d(x) = \{y \in X : d(x,y) \leq \varepsilon \}$

%\item[(ii)] $B_{< \varepsilon}^d(x) = \{ y \in X : d(x,y) < \varepsilon \}$

%\item[(iii)] $d(x,A) = \inf\{d(x,y):y \in A\}$

%\item[(iii)] $A^{d \leq \varepsilon} = \{y \in X : d(x,A) \leq \varepsilon\}$

%\item[(iv)] $A^{d < \varepsilon} = \{y \in X : d(x,A) < \varepsilon \} = \bigcup_{y\in A}B_{<\varepsilon}(y) $

%\item[(v)] $\dc (A,d)$, the metric density character of $A$ with regards to $d$, is the minimum cardinality of a $d$-dense subset of $A$.
%\end{itemize}
%We will drop the $d$ if the metric is clear from context.
%\end{nota}

\begin{nota}
\hfill
\begin{itemize}
\item For any metric space $(X,d)$ and any $A\subseteq X$, we will write $\dc (A,d)$ for the metric density character of $A$ with regards to $d$, i.e., the minimum cardinality of a $d$-dense subset of $A$. We will drop the $d$ if the metric is clear from context.

\item To avoid confusion with the established logical roles of $\wedge$ and $\vee$ we will avoid using this notation for $\min$ and $\max$ but in the interest of conciseness we will let $x \uparrow y \coloneqq \max(x,y)$ and $x \downarrow y \coloneqq \min(x,y)$. Note that  $\mathfrak{M} \models \varphi \uparrow \psi \leq 0$ if and only if $\mathfrak{M} \models \varphi \leq 0$ and $\mathfrak{M} \models \psi \leq 0$, and likewise $\mathfrak{M} \models \varphi \downarrow \psi \leq 0$ if and only if $\mathfrak{M} \models \varphi \leq 0$ or $\mathfrak{M} \models \psi \leq 0$.

We take $\uparrow$ and $\downarrow$ to have higher binding precedence than addition but lower binding precedence than multiplication, so for example $ab\uparrow c+d = ((ab)\uparrow c)+d$. We will never write expressions like $x\uparrow a \downarrow b$.

\item We will write $[x]_a^b$ for $(x\uparrow a)\downarrow b$. Note that $[x]_a^b = x$ for $x \in [a,b]$, $[x]_a^b = a$ if $x\leq a$, and $[x]_a^b = b$ if $x \geq b$.

\item If $\varphi$ is a formula and $r$ is a real number (or perhaps another formula), we will write expressions such as $\cset{\varphi < r}$ and $\cset{\varphi \geq r}$ to represent the sets of types (in some type space that will be clear from context) satisfying the given condition.
%where, for instance, $\cset{\eta \leq v}$ or $\cset{\eta < v}$ for a formula $\eta$ and real $v$ are the sets of types satisfying that condition in the obvious way)
\end{itemize}
\end{nota}

\section{Approximate Isomorphism}
\label{sec:approx-iso}

\begin{defn}
Fix a language $\Lcal$ with sorts $\mathcal{S}$, $\Lcal$-pre-structures $\frk{M}$ and $\frk{N}$, and tuples $\bar{m} \in \frk{M}$ and $\bar{n} \in \frk{N}$ of the same length with elements in the same sorts.
\begin{itemize}
    \item[(i)]  The \emph{sort-by-sort product of $\frk{M}$ and $\frk{N}$}, written $\frk{M}\times_\mathcal{S}\frk{N}$, is the collection $\bigsqcup_{s\in\mathcal{S}} s(\frk{M})\times s(\frk{N})$. If $\Lcal$ is single-sorted we will take $\times_\mathcal{S}$ to be the ordinary Cartesian product.
    \item[(ii)] A \emph{correlation between $\frk{M}$ and $\frk{N}$} is a set $R \subseteq \frk{M} \times_\mathcal{S} \frk{N}$ such that for each sort $s$,  $R\upharpoonright s \coloneqq R\upharpoonright s(\frk{M})\times s(\frk{N})$ is a total surjective relation. We will write $\mathrm{cor}(\frk{M},\bar{m};\frk{N},\bar{n})$ for the collection of correlations between $\frk{M}$ and $\frk{N}$ such that for each index $i$ less than the length of $\bar{m}$, $(m_i,n_i)\in R$ (for any binary relation we will abbreviate this condition as $(\bar{m},\bar{n})\in R$). If $\bar{m}$ and $\bar{n}$ are empty we will write $\mathrm{cor}(\frk{M},\frk{N})$. 
    \item[(iii)] An \emph{almost correlation between $\frk{M}$ and $\frk{N}$} is a correlation between dense sub-pre-structures of $\frk{M}$ and $\frk{N}$. We will write $\mathrm{acor}(\frk{M},\bar{n};\frk{N},\bar{m})$ for the collection of almost correlations $R$ between $\frk{M}$ and $\frk{N}$ such that $(\bar{m},\bar{n})\in R$.
    
\end{itemize}
\end{defn}
Note that there is no requirement that correlations or almost correlations are closed. This will turn out to be inessential, but it is convenient for constructing them.

Almost correlations are natural to consider for two reasons. If $(\frk{M},\frk{N},R)$ is a metric structure in which $R$ is a definable subset of $\frk{M}\times\frk{N}$, then there is a sentence that holds if and only if $R$ is an almost correlation. There is no sentence that holds precisely when $R$ is a correlation. The other reason is that many constructions, such as back-and-forth constructions, naturally build an enumeration of a dense sub-pre-structure rather than an enumeration of the entire structure in question. This means that when one tries to build a correlation with some kind of iterative construction, one will often only build an almost correlation.

%Many of the notions of approximate isomorphism that exist in the literature can be re-expressed in the terms of a designated special collection of formulas (recall that bijections are a special case of correlations).

\begin{defn}
Let $\Delta$ be a collection of (finitary) $\Lcal$-formulas and let $T$ be an $\Lcal$-theory. Let $\frk{M},\frk{N} \models T$ with $\bar{m}\in\frk{M}$ and $\bar{n}\in\frk{N}$.
\begin{enumerate}[label=(\roman*)]
    \item For any relation $R \subseteq \frk{M}\times_\mathcal{S} \frk{N}$, we define the \emph{distortion of $R$ with respect to $\Delta$} as follows:
$$\mathrm{dis}_\Delta(R) =\sup\{|\varphi^\frk{M}(\bar{m})-\varphi^\frk{N}(\bar{n})|:\varphi \in \Delta, (\bar{m},\bar{n})\in R\}$$
    \item We define the \emph{$\Delta$-distance between $(\frk{M},\bar{m})$ and $(\frk{N},\bar{n})$} as follows:
    $$\rho_\Delta(\frk{M},\bar{m};\frk{N},\bar{n})=\inf \{\mathrm{dis}_\Delta(R) :R\in \mathrm{cor}(\frk{M},\bar{m};\frk{N},\bar{n})\}$$
    If $\bar{m}$ and $\bar{n}$ are empty we will just write $\rho_\Delta(\frk{M},\frk{N})$.
    \item We define the \emph{almost $\Delta$-similarity between $(\frk{M},\bar{m})$ and $(\frk{N},\bar{n})$} as follows:
    $$a_\Delta(\frk{M},\bar{m};\frk{N},\bar{n})=\inf \{\mathrm{dis}_\Delta(R) :R\in \mathrm{acor}(\frk{M},\bar{m};\frk{N},\bar{n})\}$$
    If $\bar{m}$ and $\bar{n}$ are empty we will just write $a_\Delta(\frk{M},\frk{N})$.
    \item We say that $(\frk{M},\bar{m})$ and $(\frk{N},\bar{n})$ are \emph{$\Delta$-approximately isomorphic}, written $(\frk{M},\bar{m})\approxx_\Delta (\frk{N},\bar{n})$, if $\rho_\Delta(\frk{M},\bar{m};\frk{N},\bar{n})=0$. 
    \item We say that $(\frk{M},\bar{m})$ and $(\frk{N},\bar{n})$ are \emph{almost $\Delta$-approximately isomorphic} if $a_\Delta(\frk{M},\bar{m};\allowbreak\frk{N},\bar{n})=0$.
\end{enumerate}
\end{defn}

Note that $a_\Delta(\frk{M},\bar{m};\frk{N},\bar{n})=0$ does not necessarily imply that there are dense sub-pre-structures $\frk{M}_0 \subseteq \frk{M}$ and $\frk{N}_0 \subseteq \frk{N}$ such that $\frk{M}_0 \approxx_\Delta \frk{N}_0$.

Given that the composition of correlations is a correlation, it is very easy to verify that $\rho_\Delta$ is a pseudo-metric and that $\approxx_\Delta$ is an equivalence relation on the class of models of $T$.
$a_\Delta$ in general is somewhat pathological. It can fail the triangle inequality and therefore in particular be different from $\rho_\Delta$. It can even occur that $\rho_\Delta(\frk{M},\frk{N})=\infty$ while $a_\Delta(\frk{M},\frk{N})=0$. We will examine an example of this in Section \ref{sec:Irreg}. Since it is likely (although currently unknown) that almost approximate isomorphism is not transitive, we won't give it a symbol that suggests an equivalence relation. These difficulties only occur with unfamiliar notions of approximate isomorphism. Later on we will identify two common conditions that each ensure $\rho_\Delta = a_\Delta$.

Here are some familiar examples.

\begin{itemize}
    \item If $\Delta$ is the collection of all $\Lcal$-formulas, then $\rho_\Delta(\frk{M},\frk{N}) < \infty$ if and only if $\frk{M} \cong \frk{N}$.
    \item If $T$ is the empty theory in the empty signature and $\Delta = \{\frac{1}{2}d(x,y)\}$, then $\rho_\Delta=d_{\mathrm{GH}}$, the Gromov-Hausdorff distance.
    \item If $T$ is the theory of (the unit balls of) Banach spaces, and $\Delta$ is the collection of all formulas of the form $\left\lVert \sum_i \lambda_i x_i\right\lVert$, with $\sum_i \left|\lambda_i\right| \leq 1$, then $\rho_\Delta=d_K$, the Kadets distance between $\frk{M}$ and $\frk{N}$.
    \item If $T$ is the empty theory in the empty signature and $\Delta$ is the collection of all formulas of the form $[\log d(x,y)]_{-r}^r$, then $\rho_\Delta=d_\mathrm{Lip}$, the Lipschitz distance.
    \item If $\Delta$ is the collection of formulas that are $1$-Lipschitz in any model of $T$, then $$\rho_\Delta(\frk{M},\frk{N})=\inf\{d_H^\frk{C}(f(\frk{M}),g(\frk{N}))|f:\frk{M}\preceq \frk{C},g:\frk{N}\preceq\frk{C}\},$$
    where $d_H$ is the Hausdorff metric. This is a sort of elementary embedding variant of the Gromov-Hausdorff and Kadets distances.
\end{itemize}

The Banach-Mazur distance can also be formalized in this way but the specification of $\Delta$ is somewhat more complicated, so we will leave it to later.

\begin{prop} Fix $\Delta$, a collection of formulas.
\begin{itemize}
    \item[(i)] For any relation $R\subseteq \frk{M}\times_\mathcal{S} \frk{N}$, $\mathrm{dis}_\Delta (R) =\mathrm{dis}_\Delta (\ol{R})$, where $\ol{R}$ is the metric closure of $R$ (in each sort).
    \item[(ii)] There is a subset $\Delta_0 \subseteq \Delta$ with $|\Delta_0| = |\Lcal|$, such that $\mathrm{dis}_\Delta = \mathrm{dis}_{\Delta_0}$.
\end{itemize}
\end{prop}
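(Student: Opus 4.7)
For part (i), the inequality $\mathrm{dis}_\Delta(R) \leq \mathrm{dis}_\Delta(\ol{R})$ is immediate from $R \subseteq \ol{R}$, so my plan is to focus on the reverse. I would invoke the built-in uniform continuity of each formula $\varphi \in \Delta$ in each of its sort-arguments, which is a standing feature of continuous logic: given $\varphi$, a tuple $(\bar{m}',\bar{n}') \in \ol{R}$, and $\varepsilon > 0$, pick $(\bar{m},\bar{n}) \in R$ whose coordinates are close enough (using the uniform continuity moduli of $\varphi$ on the relevant sorts) that both $|\varphi^{\frk{M}}(\bar{m}) - \varphi^{\frk{M}}(\bar{m}')|$ and $|\varphi^{\frk{N}}(\bar{n}) - \varphi^{\frk{N}}(\bar{n}')|$ are below $\varepsilon/2$. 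The triangle inequality then yields $|\varphi^{\frk{M}}(\bar{m}') - \varphi^{\frk{N}}(\bar{n}')| \leq \mathrm{dis}_\Delta(R) + \varepsilon$; taking the supremum over $\varphi \in \Delta$ and $(\bar{m}',\bar{n}') \in \ol{R}$ and sending $\varepsilon \to 0$ gives the claim.

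For part (ii), my plan is to stratify $\Delta$ by the sort-tuple of free variables: $\Delta = \bigcup_{\bar{s}} \Delta_{\bar{s}}$. Since formulas in $\Delta$ are finitary and $|\Lcal|^{<\omega} = |\Lcal|$, there are at most $|\Lcal|$ such strata. Within each $\Delta_{\bar{s}}$, the $T$-equivalence classes of formulas embed into $C(S_{\bar{s}}(T))$ under the sup norm, and the key claim I would need is that this Banach algebra has density character at most $|\Lcal|$: even under the extended formalism, the $|\Lcal|$-many atomic formulas in the $\bar{s}$-sorted variables generate, under finite continuous combinations and uniform limits of countable sequences, a dense Banach subalgebra of density $\aleph_0 \cdot |\Lcal| = |\Lcal|$. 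I would then select a sup-norm-dense subset $\Delta_{\bar{s},0} \subseteq \Delta_{\bar{s}}$ of cardinality at most $|\Lcal|$ (using the fact that a subset of a metric space of density $|\Lcal|$ itself has density at most $|\Lcal|$) and set $\Delta_0 \coloneqq \bigcup_{\bar{s}} \Delta_{\bar{s},0}$, padding from $\Delta$ if necessary to reach cardinality exactly $|\Lcal|$.

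To close the argument, note that $\mathrm{dis}_{\Delta_0}(R) \leq \mathrm{dis}_\Delta(R)$ is trivial. For the reverse, given $\varphi \in \Delta_{\bar{s}}$, a pair $(\bar{m},\bar{n}) \in R$, and $\varepsilon > 0$, pick $\psi \in \Delta_{\bar{s},0}$ within uniform distance $\varepsilon$ of $\varphi$ across $T$-models; then $|\varphi^{\frk{M}}(\bar{m}) - \varphi^{\frk{N}}(\bar{n})| \leq |\psi^{\frk{M}}(\bar{m}) - \psi^{\frk{N}}(\bar{n})| + 2\varepsilon \leq \mathrm{dis}_{\Delta_0}(R) + 2\varepsilon$, and taking the sup and letting $\varepsilon \to 0$ finishes. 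The only step requiring more than routine care is the density bound on $C(S_{\bar{s}}(T))$ in the extended formalism; once that is in hand, both parts reduce to standard uniform-continuity and uniform-approximation manipulations.
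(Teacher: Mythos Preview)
Your proposal is correct and follows essentially the same line as the paper: part (i) is exactly the uniform-continuity argument the paper invokes in one sentence, and part (ii) is the paper's ``choose a subset dense in $\Delta$ under uniform convergence, using that the space of $\Lcal$-formulas has density character $|\Lcal|$,'' with your sort-stratification and explicit $\mathrm{dis}_\Delta = \mathrm{dis}_{\Delta_0}$ verification merely unpacking what the paper leaves implicit.
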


\begin{proof}
\emph{(i):} This follows from the uniform continuity of each $\varphi \in \Delta$.

\emph{(ii):} Choose a subset which is dense in $\Delta$ with regards to uniform convergence. Since the density character of the space of all $\Lcal$-formulas under uniform convergence is $|\Lcal|$, it is always possible to find such a $\Delta_0$.
\end{proof}

 Given a collection $\Delta$, we can often enlarge it in a natural way without changing the value of $\rho_\Delta$.

\begin{defn}
If $\Delta$ is a collection of formulas, let $\ol{\Delta}$ be the closure of $\Delta$ under renaming variables, quantification, $1$-Lipschitz connectives, logical equivalence modulo $T$, and uniformly convergent limits.
\end{defn}

Recall that $1$-Lipschitz is meant in the sense of the max metric, i.e.\ a connective $F(\bar{x})$ is $1$-Lipschitz if $|x_i - y_i|\leq \e$ for all $i$ implies $|F(\bar{x})-F(\bar{y})|\leq \e$.

\begin{prop} \label{prop:enlarge}
For any $\frk{M},\frk{N}\models T$ and $\bar{m}\in\frk{M}$, $\bar{n}\in\frk{N}$, $\mathrm{dis}_\Delta(R) = \mathrm{dis}_{\ol{\Delta}}(R)$, so in particular $\rho_\Delta(\frk{M},\bar{m};\frk{N},\bar{n}) = \rho_{\ol{\Delta}}(\frk{M},\bar{m};\frk{N},\bar{n})$.
\end{prop}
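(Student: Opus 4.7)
The inequality $\mathrm{dis}_\Delta(R) \leq \mathrm{dis}_{\overline{\Delta}}(R)$ is immediate from $\Delta \subseteq \overline{\Delta}$, so the whole content is the reverse inequality. My plan is to let $d := \mathrm{dis}_\Delta(R)$ and then to show, by induction on the construction of $\overline{\Delta}$, that every $\psi \in \overline{\Delta}$ satisfies $|\psi^{\mathfrak{M}}(\bar{m}) - \psi^{\mathfrak{N}}(\bar{n})| \leq d$ whenever $(\bar{m},\bar{n}) \in R$ componentwise. Since $\overline{\Delta}$ is defined as the closure under five operations (renaming variables, quantification, $1$-Lipschitz connectives, $T$-equivalence, and uniform limits), I would handle each operation separately, verifying that if the hypothesis holds for all the immediate subformulas then it holds for the formula built from them.

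Renaming variables and $T$-equivalence are immediate: the interpretations in any model are unchanged. For a $1$-Lipschitz connective $F(\bar{x})$ applied to formulas $\varphi_i$ each satisfying the induction hypothesis, the vector $(\varphi_i^{\mathfrak{M}}(\bar{m}) - \varphi_i^{\mathfrak{N}}(\bar{n}))_i$ has $\sup$-norm at most $d$, so by the $1$-Lipschitz condition (in the max metric, as emphasized just before the proposition) $F$ applied to these values also differs by at most $d$. For a uniformly convergent limit $\psi = \lim_n \psi_n$ with each $\psi_n$ satisfying the bound, a standard $3\varepsilon$ argument yields the bound for $\psi$.

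The one step that genuinely uses the structure of correlations rather than arbitrary relations is quantification. Suppose $\varphi(\bar{x}, y)$ satisfies the induction hypothesis and consider $\psi(\bar{x}) := \sup_y \varphi(\bar{x}, y)$ (the $\inf$ case is symmetric). Fix $(\bar{m},\bar{n}) \in R$ componentwise. For any $m \in s(\mathfrak{M})$ in the relevant sort, totality of $R\upharpoonright s$ gives some $n \in s(\mathfrak{N})$ with $(m,n) \in R$, hence $\varphi^{\mathfrak{M}}(\bar{m}, m) \leq \varphi^{\mathfrak{N}}(\bar{n}, n) + d \leq \psi^{\mathfrak{N}}(\bar{n}) + d$; taking the supremum over $m$ gives $\psi^{\mathfrak{M}}(\bar{m}) \leq \psi^{\mathfrak{N}}(\bar{n}) + d$. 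The reverse inequality uses surjectivity of $R\upharpoonright s$ in exactly the same way. This is the step I would expect to be the main obstacle, since it is the only place where one needs both directions of the total-surjective condition on $R$ and it is what makes the argument fail for arbitrary subsets of $\mathfrak{M} \times_{\mathcal{S}} \mathfrak{N}$.

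Once all five closure operations are checked, the statement $\mathrm{dis}_{\overline{\Delta}}(R) \leq d$ follows, and the conclusion about $\rho_\Delta$ is immediate by taking an infimum over $R \in \mathrm{cor}(\mathfrak{M}, \bar{m}; \mathfrak{N}, \bar{n})$.
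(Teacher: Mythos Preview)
Your proposal is correct and takes essentially the same approach as the paper: both reduce to checking that each of the five closure operations preserves the bound $|\psi^{\frk{M}}(\bar m)-\psi^{\frk{N}}(\bar n)|\leq \mathrm{dis}_\Delta(R)$, with the quantifier step---using totality and surjectivity of $R$---being the only nontrivial one. The paper organizes the induction as an explicit transfinite chain $\{\Delta_i\}_{i<\omega_1}$ alternating the easy closures with a single quantifier step, whereas you phrase it as a direct induction on formula construction, but the content is identical.
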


\begin{proof}
It is clear that if $\Delta$ is any set of formulas and $\Sigma$ is $\Delta$ closed under renaming variables, $1$-Lipschitz connectives, logical equivalence modulo $T$, and uniformly convergent limits, then $\mathrm{dis}_\Sigma \leq \mathrm{dis}_\Delta$. Since $\Sigma\supseteq \Delta$, we also clearly have $\mathrm{dis}_\Delta \leq \mathrm{dis}_\Sigma$, so $\mathrm{dis}_\Sigma =\mathrm{dis}_\Delta$.

Furthermore if $\{\Sigma_i\}_{i<\lambda}$ is some increasing chain of sets of formulas such that $\mathrm{dis}_\Delta=\mathrm{dis}_{\Sigma_i}$ for every $i<\lambda$, then we also have that $\mathrm{dis}_\Delta = \mathrm{dis}_{\Sigma_\lambda}$, where $\Sigma_\lambda =\bigcup_{i<\lambda} \Sigma_i$.

So the only difficulty is showing that quantification is safe. Suppose that $\varphi(\bar{x},y) \in \Sigma$. We want to show that for any structures $\frk{M},\frk{N}\models T$, $\bar{m}\in\frk{M}$,  $\bar{n}\in\frk{N}$, and $R\in\mathrm{cor}(\frk{M},\bar{m};\frk{N},\bar{n})$,  $|\inf_y\varphi^\frk{M}(\bar{m},y)-\inf_y \varphi^\frk{N}(\bar{n},y)| \leq \mathrm{dis}_{\Sigma}(R)$.

For each $\e>0$, find an $a \in \frk{M}$ such that $\varphi^\frk{M}(\bar{m},a) < \inf_y \varphi^\frk{M}(\bar{m},y)+\e$. Since $R$ is a correlation we can find a $b\in \frk{N}$ such that $(a,b)\in R$, so we must have that 
\begin{align*}
\inf_y \varphi^\frk{N}(\bar{n},y) &\leq \varphi^\frk{N}(\bar{n},b) <  \varphi^\frk{M}(\bar{m},a) + \mathrm{dis}_{\Sigma}(R), \\
\inf_y \varphi^\frk{N}(\bar{n},y) &< \inf_y \varphi^\frk{M}(\bar{m},a) + \mathrm{dis}_{\Sigma}(R) + \e.
\end{align*}
By symmetry we have that $|\inf_y \varphi^\frk{M}(\bar{m},y) - \inf_y \varphi^\frk{N}(\bar{n},y)| \leq \mathrm{dis}_{\Sigma}(R)+ \e$,
and since we can do this for any $\e > 0$ we have that $|\inf_y\varphi^\frk{M}(\bar{m},y)-\inf_y \varphi^\frk{N}(\bar{n},y)| \leq \mathrm{dis}_{\Sigma}(R)$ as required. Since $x\mapsto -x$ is a $1$-Lipschitz connective, we have this for $\sup$ as well.

So by iteratively alternating between closing under renaming variables, $1$-Lipschitz connectives, logical equivalence modulo $T$, and uniformly convergent limits on the one hand and closure under connectives on the other, we can form a chain $\{\Delta_i\}_{i<\omega_1}$ whose union is $\ol{\Delta}$ and which has the property that for every $i<\omega_1$, $\mathrm{dis}_{\Delta_i}(R)=\mathrm{dis}_\Delta(R)$. Therefore we have that $\mathrm{dis}_\Delta(R) = \mathrm{dis}_{\ol{\Delta}}(R)$, as required.
\end{proof}

If we require one more thing from $\Delta$ we get something more, and we can justify the name `approximate isomorphism' in that it is, naturally, approximately isomorphism:

\begin{prop} Let $\Delta=\ol{\Delta}$ be logically complete. If $(\frk{M},\bar{m})$ and $(\frk{N},\bar{n})$ are almost $\Delta$-approximately isomorphic (so in particular if $(\frk{M},\bar{m})\approxx_\Delta (\frk{N},\bar{n})$), then for any non-principal ultrafilter $\mathcal{U}$ on $\omega$, $(\frk{M},\bar{m})^\mathcal{U} \cong (\frk{N},\bar{n})^\mathcal{U}$, and therefore  $(\frk{M},\bar{m})\equiv(\frk{N},\bar{n})$.

\end{prop}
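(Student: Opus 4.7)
The plan is to manufacture the isomorphism directly from an ultraproduct of low-distortion almost correlations, avoiding any appeal to back-and-forth or Keisler--Shelah. Using $a_\Delta(\frk{M},\bar m;\frk{N},\bar n)=0$, I would first pick $R_k\in\mathrm{acor}(\frk{M},\bar m;\frk{N},\bar n)$ with $\mathrm{dis}_\Delta(R_k)<2^{-k}$, where each $R_k$ lives on dense sub-pre-structures $\frk{M}_k\subseteq\frk{M}$ and $\frk{N}_k\subseteq\frk{N}$. I would then define $R_\mathcal{U}\subseteq\frk{M}^\mathcal{U}\times_\mathcal{S}\frk{N}^\mathcal{U}$ by declaring $(a,b)\in R_\mathcal{U}$ iff there exist representatives $a=[a_k]_\mathcal{U}$, $b=[b_k]_\mathcal{U}$ with $(a_k,b_k)\in R_k$ for $\mathcal{U}$-almost every $k$. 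The pair $(\bar m,\bar n)$ lies in $R_\mathcal{U}$ via constant sequences, and totality and surjectivity of $R_\mathcal{U}$ follow from the density of $\frk{M}_k$ and $\frk{N}_k$: given $a\in\frk{M}^\mathcal{U}$ with arbitrary representative $(a_k)$, approximate by $a_k'\in\frk{M}_k$ with $d(a_k,a_k')<2^{-k}$ (so that $\lim_\mathcal{U} d(a_k,a_k')=0$ and $[a_k']_\mathcal{U}=a$) and select $b_k\in\frk{N}_k$ with $(a_k',b_k)\in R_k$.

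Next I would verify that $R_\mathcal{U}$ has zero distortion against every $\Lcal$-formula. Given $\varphi\in\Delta$ and a tuple with each $(a_i,b_i)\in R_\mathcal{U}$, pick witnessing sequences $(a_{i,k},b_{i,k})\in R_k$ for each $i$; since the intersection of finitely many $\mathcal{U}$-large sets is $\mathcal{U}$-large, on a $\mathcal{U}$-large set of $k$ all the $(a_{i,k},b_{i,k})$ simultaneously lie in $R_k$, yielding $|\varphi^\frk{M}(\bar a_k)-\varphi^\frk{N}(\bar b_k)|\leq\mathrm{dis}_\Delta(R_k)<2^{-k}$. Łoś's theorem then gives $\varphi^{\frk{M}^\mathcal{U}}(\bar a)=\lim_\mathcal{U}\varphi^\frk{M}(\bar a_k)=\lim_\mathcal{U}\varphi^\frk{N}(\bar b_k)=\varphi^{\frk{N}^\mathcal{U}}(\bar b)$. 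By the logical completeness of $\Delta=\ol{\Delta}$, every $\Lcal$-formula is equivalent modulo $T$ to a member of $\Delta$, so this identity propagates to every formula.

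To turn $R_\mathcal{U}$ into an isomorphism, I would apply this preservation to the atomic formula $d(x,y)$: if $(a,b_1),(a,b_2)\in R_\mathcal{U}$, then $d^{\frk{N}^\mathcal{U}}(b_1,b_2)=d^{\frk{M}^\mathcal{U}}(a,a)=0$, so $b_1=b_2$; by symmetry $R_\mathcal{U}$ is the graph of a bijection $f\colon\frk{M}^\mathcal{U}\to\frk{N}^\mathcal{U}$. Since $f$ preserves every $\Lcal$-formula---in particular atomic predicates and the graph equations $d(F(\bar x),y)=0$ of function symbols---it is an isomorphism of $\Lcal$-structures sending $\bar m$ to $\bar n$. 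Elementary equivalence $(\frk{M},\bar m)\equiv(\frk{N},\bar n)$ then drops out from the usual elementary embedding of each structure into its own ultrapower.

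The main technical delicacy is that the $R_k$ are only \emph{almost} correlations: one has to verify carefully that the density-based replacement of representatives does not change the ultrapower class (crucially using $\lim_\mathcal{U} 2^{-k}=0$) and that witnesses can be aligned across several tuple coordinates so that Łoś's theorem applies to multi-variable formulas. Once those bookkeeping points are settled, everything else is a direct computation with the definitions.
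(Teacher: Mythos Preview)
Your approach is correct and is essentially the paper's: form the ultraproduct of the low-distortion almost correlations and read off the isomorphism. The only difference is packaging---the paper encodes each $(\frk{M},\frk{N},\overline{R_k})$ as a two-sorted structure with the closed $R_k$ as a definable set, takes the ultraproduct of those structures, and cites $\aleph_1$-saturation to see that the limit relation is total and surjective, whereas you verify the same facts by hand with the density-and-replacement argument.

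One correction: ``logical completeness of $\Delta$'' does \emph{not} mean every $\Lcal$-formula is $T$-equivalent to a member of $\Delta$ (for instance $d(x,y)$ is generally not in $\Delta$, since $\Delta$ is only closed under $1$-Lipschitz connectives; in the Gromov--Hausdorff case only $\tfrac{1}{2}d(x,y)$ is there). It means that $\Delta$-types determine full types. Your argument only needs the latter: once a pair of tuples agrees on every $\varphi\in\Delta$ they have the same type and hence agree on $d$ and on every atomic predicate, so the rest of your proof goes through unchanged.
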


\begin{proof}
 Let $\{R_i\}_{i<\omega}$ be a sequence of closed almost correlations between $\frk{M}$ and $\frk{N}$ such that $(\bar{m},\bar{n})\in R_i$ and $\mathrm{dis}_\Delta (R_i) \leq 2^{-i}$. Let $\mathcal{U}$ be a non-principal ultrafilter on $\omega$. For each $i$, let $(\frk{M},\frk{N},R_i)$ be a metric structure containing $\frk{M}$ and $\frk{N}$ in different sorts and having distance predicates for the sets $R_i\upharpoonright s \subseteq s(\frk{M})\times s(\frk{N})$ for each $s\in\mathcal{S}$. Consider the structure $(\frk{M}^\prime,\frk{N}^\prime,R^\prime) = \prod_{i<\omega} (\frk{M},\frk{N},R_i)/\mathcal{U}$. Clearly $\frk{M}^\prime$ and $\frk{N}^\prime$ are elementary extensions of $\frk{M}$ and $\frk{N}$, respectively. Furthermore by $\aleph_1$-saturation (in a countable reduct if the language is uncountable) we have that $R^\prime$ is a correlation, rather than just an almost correlation.
 
 Finally for each formula $\varphi \in \Delta$, we have that $$(\frk{M},\frk{N},R_i)\models \sup_{(\bar{x},\bar{y}) \in R_i} |\varphi^\frk{M}(\bar{x},\bar{m})-\varphi^\frk{N}(\bar{y},\bar{n})| \leq 2^{-i}.$$  This is expressible because $R_i$ is a definable set. Therefore we have that $$(\frk{M}^\prime,\frk{N}^\prime,R^\prime)\models \sup_{(\bar{x},\bar{y}) \in R_i} |\varphi^\frk{M}(\bar{x},\bar{m}^\prime)-\varphi^\frk{N}(\bar{y},\bar{n}^\prime)| \leq 0$$ for each formula $\varphi \in \Delta$. By the logical completeness of $\Delta$, this implies that $R^\prime$ is the graph of an isomorphism between $\frk{M}^\prime$ and $\frk{N}^\prime$ with $(\bar{m}^\prime,\bar{n}^\prime)\in R^\prime$.
\end{proof}

So we will give $\Delta$'s with these properties a name.

\begin{defn}
A set of formulas $\Delta$ is a \emph{distortion system for $T$} if it is logically complete and closed under renaming variables, quantification, $1$-Lipschitz connectives, logical equivalence modulo $T$, and uniformly convergent limits.
\end{defn}

In many of the motivating examples we aren't given a $\Delta$ that is already a distortion system and it's not immediately clear whether or not $\ol{\Delta}$ will be logically complete. There is an easy test, however.

\begin{defn}
A collection of $\Lcal$-formulas $\Delta$ is $\emph{atomically complete}$ if, after closing under renaming of variables, any quantifier free type $p$ is entirely determined by the values of $\varphi(p)$ for $\varphi \in \Delta$.
\end{defn}

\begin{prop} \label{prop:atom-comp}
If $\Delta$ is atomically complete, then $\ol{\Delta}$ is a distortion system.
\end{prop}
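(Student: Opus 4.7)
Plan: By construction, $\ol{\Delta}$ is closed under all the operations listed in the definition of a distortion system, so it suffices to verify logical completeness, i.e.\ that for any $\frk{M},\frk{N}\models T$ and tuples $\bar{a}\in\frk{M}$, $\bar{b}\in\frk{N}$ with $\varphi^\frk{M}(\bar{a})=\varphi^\frk{N}(\bar{b})$ for every $\varphi\in\ol{\Delta}$, the tuples realize the same complete $\Lcal$-type. The base case---that such $\bar{a}$ and $\bar{b}$ have the same quantifier-free type---is immediate from atomic completeness of $\Delta$ applied to $\Delta\subseteq\ol{\Delta}$.

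To upgrade to full type equality I would run a back-and-forth argument in $\aleph_1$-saturated (or, when $\Lcal$ is uncountable, sufficiently saturated) elementary extensions of $\frk{M}$ and $\frk{N}$. The crucial extension lemma to verify is: if $\bar{a}$ and $\bar{b}$ agree on all $\ol{\Delta}$-formulas and $c$ is any element extending $\bar{a}$, then there exists $c'$ in a saturated elementary extension of $\frk{N}$ such that $(\bar{a},c)$ and $(\bar{b},c')$ also agree on all of $\ol{\Delta}$. By saturation this reduces to finite satisfiability: for each finite family $\varphi_1,\ldots,\varphi_k\in\ol{\Delta}$ with $r_i\coloneqq\varphi_i^\frk{M}(\bar{a},c)$, the formula
\[
\inf_y \max_{1\leq i\leq k}\, |\varphi_i(\bar{x},y)-r_i|
\]
itself lies in $\ol{\Delta}$, since the outer connective $(z_1,\ldots,z_k)\mapsto\max_i|z_i-r_i|$ is $1$-Lipschitz in the max metric and $\ol{\Delta}$ is closed under quantification. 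Plugging in $y=c$ makes this formula evaluate to $0$ on $\bar{a}$, hence also on $\bar{b}$ by hypothesis, giving the required consistency. Saturation then produces the desired $c'$, and iterating in the standard back-and-forth manner builds an elementary bijection sending $\bar{a}$ to $\bar{b}$, so $\bar{a}\equiv\bar{b}$.

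The main obstacle, in my view, is less the back-and-forth machinery itself (which is routine) than bookkeeping the closure operations: explicitly verifying that every constituent of $\inf_y \max_i |\varphi_i(\bar{x},y)-r_i|$ lies in $\ol{\Delta}$, and ensuring the saturation cardinal is calibrated appropriately against $|\Lcal|$. Neither step is deep, but the whole content of the proposition lies in this interplay between atomic completeness, closure under quantification, and the restriction to $1$-Lipschitz connectives.
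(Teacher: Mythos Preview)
Your approach is correct and genuinely different from the paper's. The paper argues purely at the level of type spaces: for each atomic formula $\varphi$ and reals $r<s$ it uses compactness (of the quantifier-free type space, via atomic completeness) to produce a $\ol{\Delta}$-formula separating $\cset{\varphi\leq r}$ from $\cset{\varphi\geq s}$, and then pushes this through a prenex normal form to show every type is pinned down by its $\ol{\Delta}$-values. No saturated models are invoked. Your route is the semantic dual: pass to saturated extensions and show that ``agrees on $\ol{\Delta}$'' is a back-and-forth system, the key point being that the witnessing formula $\inf_y\max_i|\varphi_i(\bar{x},y)-r_i|$ lies in $\ol{\Delta}$. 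This is a clean way to see exactly which closure properties are doing the work (quantification plus $1$-Lipschitz connectives), whereas the paper's argument is more self-contained and constructive---it actually exhibits the approximating $\ol{\Delta}$-formulas rather than appealing to saturation.

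One minor point: you do not need to build an ``elementary bijection'' (which would require arranging the saturated models to have the same cardinality). Once the extension lemma is established, an induction on quantifier depth directly shows that $\ol{\Delta}$-equivalent tuples agree on every formula; the back-and-forth system by itself suffices without ever assembling a global map.
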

\begin{proof} Clearly we only need to show that $\ol{\Delta}$ is logically complete. Let $\varphi$ be an atomic formula and let $r<s$ be real numbers.

\emph{Claim:} There is a $\ol{\Delta}$-formula $\psi$ and real numbers $u < v$ such that $\varphi\leq r \vdash \psi < u$ and $\varphi \geq s \vdash \psi > v$.

\emph{Proof of claim:} Let $p$ be a quantifier free type with $\varphi(p)\leq r$. By compactness there must exist a finite list $\chi_1,\dots,\chi_k$ of $\Delta$-formulas and an $\e_p > 0$ such that $\cset{|\chi_1-\chi_1(p)|\leq \e_p}\cap\dots \cap \cset{|\chi_k-\chi_k(p)|\leq \e_p}$ is disjoint from $\cset{\varphi \geq s}$. In particular $\xi_p=|\chi_1 -\chi_1(p)| \uparrow\dots \uparrow |\chi_k-\chi_k(p)|$ is a $\ol{\Delta}$-formula. By compactness there is a finite list $p_1,\dots,p_\ell$ of quantifier free types such that $\cset{\xi_{p_1} < \e_{p_1}} \cup \dots \cup \cset{\xi_{p_\ell} < \e_{p_\ell}}$ covers $\cset{\varphi \leq r}$. Furthermore we still have that $\cset{\xi_{p_1} \leq \e_{p_1}} \cup \dots \cup\cset{\xi_{p_\ell} \leq \e_{p_\ell}}$ is disjoint from $\cset{\varphi \geq s}$. Let $\theta = (\xi_{p_1} - \e_{p_1}) \downarrow \dots \downarrow (\xi_{p_\ell} - \e_{p_\ell})$, and note that this a $\ol{\Delta}$-formula. We have that $\cset{\varphi \leq r} \subseteq \cset{\theta < 0}$, and by compactness there must be some $\delta > 0$ such that $\cset{\varphi \geq s} \subseteq \cset{\theta >\delta}$, as required. \hfill $\qed_{\text{claim}}$ %\emph{End of proof of claim.}

Let $\Sigma$ be the set of formulas $\varphi$ that are in prenex form with quantifier free part $\psi$ such that $\psi$ is a maximum of minimums of formulas of the form $\alpha - r$, $r - \alpha$, or $r$ where $\alpha$ is an atomic formula and $r$ a real number. $\Sigma$ is logically complete in any signature.%\footnote{I should probably prove this somewhere, but I know it's true.}

Fix a type $p \in S_n(T)$ and consider a formula $\varphi \in \Sigma$ such that $p\vdash \varphi \leq 0$. Fix $\e > 0$ and let the quantifier free part of $\varphi$ be $\psi=\max_{i<k}\min_{j<\ell(i)}\chi_{i,j}$. Define formulas $\chi^\prime_{i,j}$ like so: 
\begin{itemize}
    \item If $\chi_{i,j} = r$, then $\chi^\prime_{i,j} = r$ and $\delta_{i,j} = 1$.
    \item If $\chi_{i,j} = \alpha - r$, find a $\ol{\Delta}$-formula $\eta$ and real numbers $u<v$ such that $\cset{\alpha \leq r} \subseteq \cset{\eta < u}$ and $\cset{\alpha \geq r+\e}\subseteq \cset{\eta > v}$, and set $\chi_{i,j}^\prime = \eta - u$ and let $\delta_{i,j} = \frac{v-u}{2}$.
    \item If $\chi_{i,j} = r - \alpha$, find a $\ol{\Delta}$-formula $\eta$ and real numbers $u<v$ such that $\cset{\alpha \leq r - \e} \subseteq \cset{\eta < u}$ and $\cset{\alpha \geq r}\subseteq \cset{\eta > v}$, and set $\chi_{i,j}^\prime = v - \eta$ and let $\delta_{i,j} = \frac{v-u}{2}$.
\end{itemize}
Set $\delta = \min_{i,j}\delta_{i,j}$ and $\psi^\prime = \max_{i<k}\min_{j<\ell(i)}\chi_{i,j}^\prime$. Then let $\varphi^\prime$ be $\psi^\prime$ with the same quantifiers that $\varphi$ has. Now we have constructed a $\ol{\Delta}$-formula, $\varphi^\prime$ such that for any type $q$, if $q \vdash \varphi \leq 0$, then $q\vdash \varphi^\prime\leq 0$ and if $q\vdash \varphi^\prime \leq \delta$, then $q\vdash \varphi \leq \e$. Since we can do this for any $\varphi \in p$ and any $\e > 0$, we have that $p$ is entirely determined by $\{\varphi : p \vdash \varphi \in \ol{\Delta}\}$, as required.
\end{proof}

%We will prove as a corollary of later results that if $\Delta$ is atomically complete then $\ol{\Delta}$ is a distortion system.

Now is a convenient time to introduce the following notions:

\begin{defn} Let $\Delta$ be a distortion system for $T$.
\begin{itemize}
    \item[(i)] We say that $\Delta$ is \emph{regular} if there is an  $\e>0$ such that for any models $\mathfrak{M},\mathfrak{N}\models T$, any almost correlation $R \in \mathrm{acor}(\frk{M},\frk{N})$ with $\mathrm{dis}_\Delta(R) < \e$, and any $\delta> 0$, there exists a correlation $S\in\mathrm{cor}(\frk{M},\frk{N})$ such that $S \supseteq R$ and $\mathrm{dis}_\Delta(S) \leq \mathrm{dis}_\Delta(R) + \delta$.
    \item[(ii)] We say that $\Delta$ is \emph{functional} if there is an $\e > 0$ such that for any models $\mathfrak{M},\mathfrak{N}\models T$ and any closed $R \in \mathrm{acor}(\frk{M},\frk{N})$, if $\mathrm{dis}_\Delta(R) < \e$, then $R$ is the graph of a uniformly continuous bijection between $\frk{M}$ and $\frk{N}$ with uniformly continuous inverse. 
    \item[(iii)] We say that $\Delta$ is \emph{uniformly uniformly continuous} or \emph{u.u.c.}\ if for every $\e>0$ there exists a $\delta>0$ such that for any models $\mathfrak{M},\mathfrak{N}\models T$ and any almost correlation $R \in \mathrm{acor}(\frk{M},\frk{N})$, $\mathrm{dis}_\Delta(R^{< \delta}) \leq \mathrm{dis}_\Delta(R) + \e$, where $R^{< \delta} = \{(a,b) : (\exists(c,d) \in R) d^{\frk{M}}(a,c),d^{\frk{N}}(b,d) < \delta\}$. \qedhere
\end{itemize}
\end{defn}

Obviously functional and u.u.c.\ distortion systems are regular. It is easy to construct regular distortion systems that are neither by `gluing' together functional and u.u.c.\ distortion systems, such as a two-sorted theory in which both sorts are metric spaces and we simultaneously consider Gromov-Hausdorff distance on the first sort and Lipschitz distance on the second sort.

Functional distortion systems are essentially the same as Ben Yaacov's perturbations. The Gromov-Hausdorff and Kadets distances are u.u.c., and u.u.c.\ distortion systems are natural generalizations of the Gromov-Hausdoff and Kadets distances. Furthermore one can show that in some common cases the back-and-forth metrics of \cite{MSA} must be either isomorphism itself or be equivalent to $\rho_\Delta$ for $\Delta$, a u.u.c.\ distortion system. %too much shade?

\begin{prop} \label{prop:nice-regular} Let $\Delta$ be a distortion system.
\begin{itemize}
    \item[(i)] If $\Delta$ is regular, then for any $(\frk{M},\bar{m})$ and $(\frk{N},\bar{n})$, $$\rho_\Delta(\frk{M},\bar{m};\frk{N},\bar{n}) = a_\Delta(\frk{M},\bar{m};\frk{N},\bar{n}).$$ 
    \item[(ii)] $\Delta$ is functional if and only if there is an $\e>0$ such that for any $\delta > 0$ there is a formula $\varphi(x,y)\in \Delta$ such that for any $\frk{M}\models T$ and  $a,b\in \frk{M}$, $\varphi^\frk{M}(a,a)=0$ and if $\varphi^\frk{M}(a,b) < \e$, then $d^\frk{M}(a,b)<\delta$.
    \item[(iii)] $\Delta$ is u.u.c.\ if and only if it is uniformly uniformly continuous as a set of formulas, i.e.\ there is a single modulus $\alpha:\mathbb{R}\rightarrow \mathbb{R}$ (continuous and with $\alpha(0)=0$) such that for any $\bar{a},\bar{b} \in \frk{M}\models T$, $|\varphi^\frk{M}(\bar{a})-\varphi^\frk{M}(\bar{b})| \leq \alpha(d^\frk{M}(\bar{a},\bar{b}))$.
\end{itemize}
\end{prop}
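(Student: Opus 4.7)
For \emph{(i)}, the inequality $a_\Delta \leq \rho_\Delta$ is immediate from $\mathrm{cor} \subseteq \mathrm{acor}$. In the main regime where $a_\Delta < \varepsilon$ (the regularity parameter), I would take an almost correlation $R$ with $\mathrm{dis}_\Delta(R)$ close to $a_\Delta$, apply regularity to extend it to a correlation $S \supseteq R$ with $\mathrm{dis}_\Delta(S) \leq \mathrm{dis}_\Delta(R) + \eta$ for arbitrary $\eta > 0$, and take infima to get $\rho_\Delta \leq a_\Delta$. For the regime $a_\Delta \geq \varepsilon$, I would invoke the ultrapower construction used in the earlier proposition on $\Delta$-approximate isomorphism: form two-sorted structures $(\frk{M},\frk{N},R_n)$ with $\mathrm{dis}_\Delta(R_n) \to a_\Delta$, upgrade via $\aleph_1$-saturation of the ultrapower to a single correlation $R^\ast$ with $\mathrm{dis}_\Delta(R^\ast) \leq a_\Delta$, and conclude via a standard reflection through closure of $\Delta$ under $\sup$.

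For the easy direction of \emph{(ii)} ($\Leftarrow$), I would use the formulas $\varphi_\delta$ as detectors: given a closed $R$ with $\mathrm{dis}_\Delta(R)$ slightly below $\varepsilon$ and pairs $(a,b_1),(a,b_2) \in R$, applying $\varphi_\delta$ componentwise to the pair of pairs $((a,a),(b_1,b_2))$ gives $\varphi_\delta^\frk{N}(b_1,b_2) < \varepsilon$ and hence $d^\frk{N}(b_1,b_2) < \delta$; letting $\delta \to 0$ forces $R$ to be a graph, and the symmetric argument together with continuity of $\varphi_\delta$ yields uniform continuity of the bijection and its inverse. For the harder direction ($\Rightarrow$), fix $\delta > 0$ and for each $(a,b) \in \frk{M}^2$ with $d(a,b) \geq \delta$ consider the closed almost correlation $R_{(a,b)} = \{(c,c):c\in\frk{M}\} \cup \{(a,b),(b,a)\}$ from $\frk{M}$ to itself. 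Since $R_{(a,b)}$ is not a graph of a bijection, functionality gives $\mathrm{dis}_\Delta(R_{(a,b)}) \geq \varepsilon_0$, realized by some $\varphi \in \Delta$ and a tuple whose nontrivial coordinates are populated by $a$ and $b$. Setting
\[
\psi_{(a,b)}(x,y) = \sup_{\bar{u}}\bigl|\varphi(\bar{c}^\ast(\bar{u},x,y)) - \varphi(\bar{c}^\ast(\bar{u},y,x))\bigr|,
\]
where $\bar{c}^\ast$ places $x,y$ in the coordinates previously occupied by $a,b$ and $\bar{u}$ in the diagonal coordinates, produces a two-variable $\psi_{(a,b)} \in \Delta$ with $\psi_{(a,b)}(x,x) = 0$ (as the swap is the identity when $x=y$) and $\psi_{(a,b)}(a,b) \geq \varepsilon_0$ (recovering the original witness by taking $\bar{u}$ to be the original diagonal coordinates). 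A compactness argument on $\cset{d(x,y) \geq \delta} \subseteq S_2(T)$, covered by open sets $\{\psi_p > \varepsilon_0/2\}$, followed by a finite $\uparrow$-combination, then delivers the desired $\varphi_\delta$ with $\varepsilon = \varepsilon_0/2$.

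For \emph{(iii)}, the backward direction is a triangle-inequality estimate: for $(\bar{a},\bar{b}) \in R^{<\delta}$ choose $(\bar{c},\bar{d}) \in R$ coordinatewise $\delta$-close and bound $|\varphi(\bar{a}) - \varphi(\bar{b})| \leq 2\alpha(\delta) + \mathrm{dis}_\Delta(R)$. For the forward direction, apply u.u.c.\ to the diagonal correlation $R_0 = \{(c,c):c\in\frk{M}\}$ on any $\frk{M} \models T$: $R_0$ has distortion $0$, $R_0^{<\delta}$ contains every pair with $d(a,b) < \delta$, and the u.u.c.\ bound $\mathrm{dis}_\Delta(R_0^{<\delta}) \leq \varepsilon$ unpacks directly into a uniform modulus for all formulas in $\Delta$. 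The main obstacle throughout is the forward direction of \emph{(ii)}: turning the qualitative failure of $R_{(a,b)}$ to be a graph into a concrete two-variable formula vanishing on the diagonal, and then uniformizing the construction over the type space while keeping $\varepsilon$ independent of $\delta$.
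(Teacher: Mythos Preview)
Your arguments for \emph{(iii)} and for the main regime of \emph{(i)} (where $a_\Delta < \e$, the regularity threshold) match the paper's. One caveat on \emph{(i)}: your ultrapower argument for the regime $a_\Delta \geq \e$ does not close. The correlation $R^\ast$ you obtain lives between the elementary extensions $\frk{M}^{\mathcal{U}}$ and $\frk{N}^{\mathcal{U}}$, not between $\frk{M}$ and $\frk{N}$, and there is no ``standard reflection'' that brings it back down; restricting $R^\ast$ to $\frk{M}\times_\mathcal{S}\frk{N}$ need not be a correlation. The paper's proof simply does not address this regime either (its one-line argument tacitly uses $\mathrm{dis}_\Delta(R)<\e$), and the statement is only ever invoked when $a_\Delta$ is small, so this is a gap you tried to fill rather than one you introduced.

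For \emph{(ii)} your route is genuinely different from the paper's, and both directions are more elementary than what the paper does. The paper defers the proof until after it has developed the theories $\mathrm{Th}(\Delta,\e)$: for $(\Leftarrow)$ it quotes the fact that a definable set which is a graph in every $\aleph_1$-saturated model is a definable function, and for $(\Rightarrow)$ it runs a contrapositive compactness argument through the auxiliary formulas $\chi_\varphi(x,y)=\tfrac12|\varphi(x,y)-\varphi(x,x)|$ to produce types $\mathrm{tp}(ce)$ and $\mathrm{tp}(cc)$ at small $\delta_\Delta$-distance but with $d(c,e)\geq\delta$, then builds a witnessing correlation. Your $(\Leftarrow)$ argues directly that $R$ is single-valued and uniformly continuous from the detector formulas, avoiding the definable-function machinery; your $(\Rightarrow)$ extracts the two-variable witness from the concrete non-bijective correlation $R_{(a,b)}=\{(c,c)\}\cup\{(a,b),(b,a)\}$ and then uniformizes by compactness on $S_2(T)$. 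Your approach is self-contained and does not need the later Section on $\mathrm{Th}(\Delta,\e)$; the paper's approach has the advantage that the contrapositive packaging makes the role of the metric $\delta_\Delta$ on type space transparent. Both are correct, with the understanding that ``renaming variables'' in the closure conditions for $\Delta$ permits identification of variables (the paper uses this too, in forming $\varphi(x,x)$).
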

\begin{proof}
\emph{(i):} Given any almost correlations witnessing the value of $a_\Delta$, regularity immediately gives us correlations witnessing the same value for $\rho_\Delta$.

\emph{(ii):} We will defer the proof of this until later (also labeled Proposition~\hyperlink{repeated}{\ref*{prop:nice-regular}} after Fact~\ref{fact:Approx_Iso_in_Cts_Logic:1}) when machinery is available to make the proof easier.

\emph{(iii):} The $(\Rightarrow)$ direction follows easily from considering the identity as a correlation on models of $T$. The $(\Leftarrow)$ direction is obvious.
\end{proof}

Cauchy sequences in $\rho_\Delta$ give us a way of constructing a limiting structure.

\begin{lem} \label{lem:coarsest}
Let $\Delta$ be a distortion system. For every predicate symbol $P$ and every $\e > 0$ there is a $\delta > 0$ such that if $\rho_\Delta(\frk{M},\bar{m};\frk{N},\bar{n}) < \delta$ then $|P^\frk{M}(\bar{m}) - P^\frk{N}(\bar{n})| < \e$.
\end{lem}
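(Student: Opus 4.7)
The plan is to use logical completeness of $\Delta$ together with compactness of the type space $S_n(T)$ (where $n$ is the arity of $P$) to reduce the statement to a uniform-continuity fact about the continuous function $P : S_n(T) \to \mathbb{R}$ induced by the predicate symbol $P$. Since $\Delta$ is logically complete and closed under renaming of variables, the $\Delta$-formulas (viewed as continuous functions on $S_n(T)$ after suitable renaming) separate the points of the compact Hausdorff space $S_n(T)$.

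The key step is the following standard compactness argument: there exist finitely many $\Delta$-formulas $\varphi_1,\dots,\varphi_k$ of arity $n$ and a $\delta > 0$ such that whenever $|\varphi_i(p) - \varphi_i(q)| < \delta$ for all $i$, we have $|P(p) - P(q)| < \e$. To see this, consider the closed (hence compact) subset $X = \{(p,q) \in S_n(T)^2 : |P(p) - P(q)| \geq \e\}$. For each $(p,q) \in X$, logical completeness produces some $\varphi \in \Delta$ with $\varphi(p) \neq \varphi(q)$; this yields a basic open neighborhood $U_{p,q}$ of $(p,q)$ of the form $\{(p',q') : |\varphi(p') - \varphi(q')| > \eta_{p,q}\}$ for some $\eta_{p,q} > 0$. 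Compactness of $X$ gives a finite subcover $U_{p_1,q_1},\dots,U_{p_k,q_k}$, from which we read off the formulas $\varphi_i$ and $\delta = \min_i \eta_{p_i,q_i}$.

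Given this, suppose $\rho_\Delta(\frk{M},\bar{m};\frk{N},\bar{n}) < \delta$. By definition, there is a correlation $R \in \mathrm{cor}(\frk{M},\bar{m};\frk{N},\bar{n})$ with $\mathrm{dis}_\Delta(R) < \delta$; since $(\bar{m},\bar{n}) \in R$, this forces $|\varphi_i^\frk{M}(\bar{m}) - \varphi_i^\frk{N}(\bar{n})| < \delta$ for each $i$. Hence $(\tp(\bar{m}),\tp(\bar{n}))$ lies outside $X$, giving $|P^\frk{M}(\bar{m}) - P^\frk{N}(\bar{n})| < \e$.

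The only real subtlety is confirming that logical completeness delivers enough $\Delta$-formulas of arity exactly $n$ to separate types in $S_n(T)$; this leans on closure of $\Delta$ under renaming of variables. After that, the argument is a routine compactness argument in $S_n(T)^2$, with the definition of $\rho_\Delta$ plugged in at the end.
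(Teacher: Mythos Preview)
Your proof is correct and is precisely a fleshed-out version of the paper's own argument, which reads in its entirety: ``This follows from the fact that $\Delta$ is logically complete and compactness.'' Your explicit compactness argument in $S_n(T)^2$ is exactly what the paper leaves implicit.
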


\begin{proof}
This follows from the fact that $\Delta$ is logically complete and compactness.
\end{proof}

\begin{prop}
Let $\Delta$ be a distortion system for $T$.

\begin{itemize}
    \item[(i)] If $\{\frk{M}_i,\bar{m}_i\}_{i<\omega}$ is a sequence of pre-models of $T$ such that for each $i<\omega$, $$\rho_\Delta(\frk{M}_i,\bar{m}_i;\frk{M}_{i+1},\bar{m}_i) < 2^{-i},$$ then there is a pre-structure $\frk{M}_\omega \models T$ with $\bar{m}_\omega$ such that for each $i<\omega$, $$\rho_\Delta(\frk{M}_i,\bar{m}_i;\frk{M}_\omega,\bar{m}_\omega) \leq 2^{-i+1}.$$

Furthermore $\dc \frk{M}_\omega \leq \sup_i \dc \frk{M}_i$, and if all the $\frk{M}_i$ are metrically compact then $\frk{M}_\omega$ is metrically compact.
    \item[(ii)] If $\Delta$ is regular and the $\frk{M}_i$ are complete structures, then $\frk{M}_\omega$ can be taken to be a complete structure.
\end{itemize}

\end{prop}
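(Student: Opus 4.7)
The plan is to build $\frk{M}_\omega$ as the space of ``threads'' through the chain of structures, in direct analogy with the standard Gromov--Hausdorff limit construction. First, for each $i$ choose a correlation $R_i\in\mathrm{cor}(\frk{M}_i,\bar{m}_i;\frk{M}_{i+1},\bar{m}_i)$ with $\mathrm{dis}_\Delta(R_i)<2^{-i}$. A \emph{thread} in sort $s$ is a sequence $\alpha=(a_j)_{j<\omega}$ with $a_j\in s(\frk{M}_j)$ and $(a_j,a_{j+1})\in R_j$ for every $j$. The main observation is that for any $\Lcal$-formula $\varphi$ and any tuple $\bar\alpha$ of threads of matching arity, the real sequence $\varphi^{\frk{M}_j}(\bar\alpha_j)$ is Cauchy: for $\varphi\in\Delta$ this is immediate from $|\varphi^{\frk{M}_j}(\bar\alpha_j)-\varphi^{\frk{M}_{j+1}}(\bar\alpha_{j+1})|\leq\mathrm{dis}_\Delta(R_j)<2^{-j}$, and for arbitrary $\varphi$ it follows from Lemma~\ref{lem:coarsest} together with logical completeness of $\Delta$.

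I then take $\frk{M}_\omega$ to be the set of threads (enlarged as needed to be closed under coordinatewise function application; see below), with interpretations given by the limits $\varphi^{\frk{M}_\omega}(\bar\alpha):=\lim_j\varphi^{\frk{M}_j}(\bar\alpha_j)$, and quotient by the induced pseudo-metric $d^{\frk{M}_\omega}$. Axioms of $T$ are closed conditions on sentences, so they pass to the Cauchy limit and $\frk{M}_\omega\models T$. Define $S_i\subseteq\frk{M}_i\times\frk{M}_\omega$ by $S_i(a,\alpha)\iff a=\alpha_i$. This is total and surjective: totality uses totality of each $R_j$ to extend $a$ forward from stage $i$ and surjectivity of each $R_j$ to extend backward; surjectivity is immediate from the $i$th coordinate. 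The telescoping bound
\[
|\varphi^{\frk{M}_i}(\alpha_i)-\lim_j\varphi^{\frk{M}_j}(\alpha_j)|\leq\sum_{j\geq i}\mathrm{dis}_\Delta(R_j)<\sum_{j\geq i}2^{-j}=2^{-i+1}
\]
gives $\mathrm{dis}_\Delta(S_i)\leq 2^{-i+1}$. The parameter tuple $\bar{m}_\omega$ is taken to be the thread of (realizations of) the parameter labels, which is a thread by the choice of $R_i$, and $(\bar{m}_i,\bar{m}_\omega)\in S_i$ by construction.

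For the density character bound, fix a dense $D_i\subseteq s(\frk{M}_i)$ of cardinality $\dc\frk{M}_i$ for each $i$ and each sort $s$, and extend each $a\in D_i$ (at each stage $i$) to a thread through $a$. The resulting family has cardinality at most $\sup_i\dc\frk{M}_i$, and it is dense: given any thread $\beta$ and $\e>0$, pick $i$ with $2^{-i+1}<\e/2$, pick $a\in D_i$ within $\e/2$ of $\beta_i$, and extend $a$ to a thread $\alpha$; the telescoping gives $d^{\frk{M}_\omega}(\alpha,\beta)<\e$. Metric compactness follows from the same argument with finite $\e$-nets replacing $D_i$. For (ii), pass to the metric completion $\widehat{\frk{M}_\omega}$, still a model of $T$. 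Each $S_i$ becomes an almost correlation between $\frk{M}_i$ and $\widehat{\frk{M}_\omega}$ with the same distortion bound; for $i$ large enough that $2^{-i+1}$ is below the regularity threshold of $\Delta$, regularity upgrades $S_i$ to a genuine correlation with arbitrarily small additional error, and the remaining finitely many small $i$ are handled by triangle inequality through such a large $i$.

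The main obstacle I anticipate is handling function symbols cleanly: a pre-structure requires $f^{\frk{M}_\omega}$ to be defined everywhere, but the coordinatewise sequence $(f^{\frk{M}_j}(\bar\alpha_j))_j$ of a thread-tuple $\bar\alpha$ is typically not itself a thread because the $R_j$ need not respect $f$. The fix is to note that such sequences remain ``stable'': for any $\Delta$-formula $\psi$, the sequence $\psi^{\frk{M}_j}(f^{\frk{M}_j}(\bar\alpha_j))=(\psi\circ f)^{\frk{M}_j}(\bar\alpha_j)$ is again Cauchy because $\psi(f(\bar{x}))$ is still (approximable by) a $\Delta$-formula. Closing the set of threads under such coordinatewise operations yields a pre-structure on which every formula is still interpreted by a well-defined Cauchy limit, and $S_i$ remains a correlation onto this closure because every element of it carries a canonical coordinate in each $\frk{M}_i$, namely the result of the same coordinatewise recipe.
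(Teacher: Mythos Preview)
Your approach is essentially the paper's: build $\frk{M}_\omega$ as the space of threads through chosen correlations $R_i$, interpret predicates by the limits along threads, quotient by the resulting pseudo-metric, and take the $i$th-coordinate projection as the correlation $S_i$ with the telescoping distortion bound. The density-character, compactness, and part~(ii) arguments match the paper's as well.

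There is, however, a genuine gap. You write ``Axioms of $T$ are closed conditions on sentences, so they pass to the Cauchy limit and $\frk{M}_\omega\models T$.'' This presupposes that for every sentence $\varphi$ the value $\varphi^{\frk{M}_\omega}$, computed \emph{compositionally} from the atomic interpretations you defined, equals $\lim_j \varphi^{\frk{M}_j}$. Your first paragraph only shows that each sequence $\varphi^{\frk{M}_j}(\bar\alpha_j)$ is Cauchy; it does not show that the limit agrees with the inductive semantics in $\frk{M}_\omega$. The nontrivial case is quantification: for $\varphi(\bar{x})=\inf_y\psi(\bar{x},y)$ one must check that the infimum over \emph{threads} matches, in the limit, $\inf_{y\in\frk{M}_j}\psi^{\frk{M}_j}(\bar\alpha_j,y)$. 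This uses exactly the thread-extension step you already invoke for totality of $S_i$ (pick a near-witness in some $\frk{M}_j$ and extend it to a thread), but you must deploy it here. The paper carries out precisely this induction on formula complexity, establishing in addition that the convergence is \emph{uniform} in the thread-tuple, which is what drives both $\frk{M}_\omega\models T$ and the density argument.

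On function symbols: the paper simply works relationally and does not raise the issue. Your proposed fix---enlarging the thread set under coordinatewise $f$-application---does give a pre-structure, but your distortion bound on $S_i$ no longer follows. The telescoping $|\varphi^{\frk{M}_i}(\bar\alpha_i)-\varphi^{\frk{M}_\omega}(\bar\alpha)|\le 2^{-i+1}$ used $\varphi\in\Delta$; for a non-thread $\alpha=(f^{\frk{M}_j}(\bar\beta_j))_j$ the relevant formula becomes $\varphi(f(\bar x))$, which need not lie in $\Delta$ (closure under $1$-Lipschitz connectives and quantifiers does not give closure under term substitution), so only some uncontrolled modulus from Lemma~\ref{lem:coarsest} applies and the explicit bound $2^{-i+1}$ is lost. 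Once the formula-induction above is in hand, a cleaner route is to use uniform convergence of $d(y,f(\bar x))$ along threads to show that each coordinatewise-$f$-image sequence is already $d^{\frk{M}_\omega}$-equivalent to an honest thread, so no enlargement of the universe is needed.
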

\begin{proof}
\emph{(i):} For each $i<\omega$ find closed $R_{i+\frac{1}{2}} \in \mathrm{cor}(\frk{M}_i,\bar{m}_i;\frk{M}_{i+1},\bar{m}_{i+1})$ such that $\mathrm{dis}_\Delta(R_{i+\frac{1}{2}}) < 2^{-i}$.

Let $M_\omega^0 = \{x \in \prod_{i}M_i : (\forall i) (x(i),x(i+1)) \in R_{i+\frac{1}{2}}\}$.  For any $\bar{a} \in M_\omega^0$ and predicate symbol $P$, define $P^{\frk{M}_\omega^0}(\bar{a})$ to be $\lim_{i\rightarrow \infty} P^{\frk{M}_i}(\bar{a}(i))$. By Lemma \ref{lem:coarsest} this limit always exists. Furthermore we have that $d^{\frk{M}_\omega}$ is a pseudo-metric on $M_\omega^0$ and that all predicate symbols $P$ obey the correct moduli of continuity for the signature $\Lcal$. So let $\frk{M}_\omega$ be $M_\omega^0$ modded by $d^{\frk{M}_\omega} = 0$, and we have that this is an $\Lcal$-pre-structure.

Now to see that $\frk{M}_\omega\models T$, we will show that for any restricted formula $\varphi(\bar{x})$, $\varphi^{\frk{M}_\omega}(\bar{a}) = \lim_{i\rightarrow \infty} \varphi^{\frk{M}_i}(\bar{a}(i))$ and furthermore that this convergence is uniform in $\bar{a}$. We already have that this is true for atomic formulas, and if $F$ is a connective and we've shown that this holds for some tuple $\bar{\varphi}$ of formulas, then it clearly holds for $F(\bar{\varphi})$ as well. So all that we need to do is show that this is true for quantification. Let $\varphi(\bar{x},y)$ be a formula for which $\varphi^{\frk{M}_\omega}(\bar{a},b) = \lim_{i\rightarrow \infty} \varphi^{\frk{M}_i}(\bar{a}(i),b(i))$ uniformly in $\bar{a}b$. Fix $\e > 0$ and find a $j<\omega$ large enough that $|\varphi^{\frk{M}_\omega}(\bar{a},b) - \varphi^{\frk{M}_i}(\bar{a}(i),b(i))| < \frac{1}{2}\e$ for all $\bar{a}b$ and $i\geq j$. Now find $c \in \frk{M}_j$ such that $\varphi^{\frk{M}_j}(\bar{a}(j),c) < \inf_y \varphi^{\frk{M}_j}(\bar{a}(j),y) + \frac{1}{2}\e$. Extend $c$ to a sequence $e(i) \in M_\omega^0$ such that $e(j)=c$. Now we have that $$\varphi^{\frk{M}_\omega}(\bar{a},e)  < \varphi^{\frk{M}_j}(\bar{a}(j),e(j)) + \frac{1}{2}\e,$$ so in particular 
$$\inf_y \varphi^{\frk{M}_\omega}(\bar{a},y) \leq\varphi^{\frk{M}_\omega}(\bar{a},e)  < \inf_y \varphi^{\frk{M}_j}(\bar{a}(j),y) + \e.$$
Since we can do this for $\varphi$ and $-\varphi$ and for any $\e > 0$, we have shown the required property for $\inf_y \varphi(\bar{x},y)$. Therefore, by induction, this holds for all restricted formulas and thus, by uniform convergence, for all formulas.

Since sentences are formulas we have that for any $\varphi$ such that $\frk{M}_i\models \varphi\leq 0$ for all $i<\omega$, $\frk{M}_\omega \models \varphi \leq 0$ as well.

To show the bound on the density character of $\frk{M}_\omega$, assume that $\dc \frk{M}_i \leq \kappa$ (for some infinite $\kappa$) for each $i<\omega$, and for each such $i$ find a dense subset $X_i\subseteq \frk{M}_i$ of cardinality $\leq \kappa$. For each $a \in X_i$, choose some $b_a \in M_\omega^0$ such that $b_a(i) = a$ and let $X = \{b_a:(\exists i)a\in X_i\}$. Since $d$ uniformly converges this clearly is a dense subset of $\frk{M}_\omega$ as well.

For the statement regarding compact structures, Lemma \ref{lem:coarsest} implies that the sequence of underlying metric spaces of the $\frk{M}_i$ are converging in the Gromov-Hausdorff metric to the underlying metric space of $\frk{M}_\omega$. It is well known that a sequence of compact metric spaces converging in the Gromov-Hausdorff metric converges to a compact metric space, so the result follows.
  
\emph{(ii):} This follows easily from the fact that the correlations between the $\frk{M}_i$ and $\frk{M}_\omega$ are almost correlations between the $\frk{M}_i$ and the completion of $\frk{M}_\omega$.
\end{proof}

\begin{cor}
Let $\Delta$ be a distortion system for $T$. Let $\mathrm{PreMod}(T,\leq\kappa)$ be the collection of pre-models of $T$ with density character $\leq \kappa$,  $\mathrm{Mod}(T,\leq\kappa)$ be the collection of models of $T$ with density character $\leq \kappa$, and let $\mathrm{Mod}(T,\leq \omega^-)$ be the collection of compact models of $T$. For every $\kappa$,
\begin{itemize}
    \item[(i)] $(\mathrm{PreMod}(T,\leq\kappa),\rho_\Delta)$ is a complete pseudo-metric space.
    \item[(ii)] If $\Delta$ is regular then $(\mathrm{Mod}(T,\leq\kappa),\rho_\Delta)$ is a complete pseudo-metric space.
    \item[(iii)] $(\mathrm{Mod}(T,\leq \omega^-),\rho_\Delta)$ is a complete metric space. Furthermore, for compact models, $\rho_\Delta = a_\Delta$.
\end{itemize}
\end{cor}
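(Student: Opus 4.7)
The common strategy across all three parts is to take a Cauchy sequence in $\rho_\Delta$, pass to a subsequence $\{(\frk{M}_{i_k},\bar{m}_{i_k})\}_{k<\omega}$ satisfying the rapid-convergence hypothesis $\rho_\Delta(\frk{M}_{i_k},\bar{m}_{i_k};\frk{M}_{i_{k+1}},\bar{m}_{i_{k+1}}) < 2^{-k}$ of the preceding proposition, and feed this subsequence into that proposition to extract a limit $(\frk{M}_\omega,\bar{m}_\omega)$; the triangle inequality then recovers convergence of the full original sequence. The density-character bound supplied by that proposition keeps the limit inside $\mathrm{PreMod}(T,\leq\kappa)$, giving (i); for (ii), part (ii) of that proposition lets us take $\frk{M}_\omega$ to be complete under the regularity hypothesis, placing it in $\mathrm{Mod}(T,\leq\kappa)$. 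The completeness portion of (iii) follows from the same proposition's compactness clause.

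The genuinely new content is (iii)'s assertions that $\rho_\Delta = a_\Delta$ for compact models and that $\rho_\Delta$ actually separates points. For the former, given any almost correlation $R \in \mathrm{acor}(\frk{M},\frk{N})$ between compact models, I would argue that its metric closure $\ol{R}$ is already a correlation in $\mathrm{cor}(\frk{M},\frk{N})$: for $a \in \frk{M}$, approximate $a$ by a sequence $a_n$ in the dense sub-pre-structure on which $R$ is defined, pick $b_n$ with $(a_n,b_n) \in R$, and use compactness of $\frk{N}$ to extract a convergent subsequence $b_{n_k} \to b$, so $(a,b) \in \ol{R}$. Surjectivity is symmetric. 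The earlier proposition on distortion under closure then gives $\mathrm{dis}_\Delta(\ol{R}) = \mathrm{dis}_\Delta(R)$, so $\rho_\Delta \leq a_\Delta$, with the reverse inequality being immediate.

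The separation property is the most substantive new piece and what I expect to require the most care. Suppose $\rho_\Delta(\frk{M},\bar{m};\frk{N},\bar{n}) = 0$ with $\frk{M},\frk{N}$ compact. Since $a_\Delta \leq \rho_\Delta$, the pointed structures are almost $\Delta$-approximately isomorphic, so the ultrapower proposition stated before the definition of distortion systems yields $(\frk{M},\bar{m})^\mathcal{U} \cong (\frk{N},\bar{n})^\mathcal{U}$ for any nonprincipal ultrafilter $\mathcal{U}$ on $\omega$. I would then invoke the standard fact that for a compact metric structure the diagonal embedding $\frk{M} \hookrightarrow \frk{M}^\mathcal{U}$ is surjective: any representative $[a_i]$ has a $\mathcal{U}$-limit $a \in \frk{M}$ by compactness, and $d([a_i],a) = \lim_\mathcal{U} d(a_i,a) = 0$, so the diagonal map is a pointed isomorphism. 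Chaining these canonical isomorphisms yields $(\frk{M},\bar{m}) \cong (\frk{N},\bar{n})$ as required; the only subtlety is keeping the basepoints aligned through the chain, which is automatic from the way each map is constructed.
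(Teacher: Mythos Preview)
Your proposal is correct and follows essentially the same route as the paper. Parts (i)--(ii) and the completeness and $\rho_\Delta=a_\Delta$ clauses of (iii) match the paper's proof nearly verbatim; for separation the paper takes the ultraproduct of the two-sorted structures $(\frk{M},\frk{N},R_i)$ directly and observes it has the form $(\frk{M},\frk{N},R_\omega)$ since compact structures coincide with their ultrapowers, which is exactly the content of your argument repackaged through the earlier ultrapower proposition.
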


\begin{proof}
\emph{(i)} and \emph{(ii)} are obvious from the previous proposition. 

\emph{(iii):} The furthermore statement follows from the fact that almost correlations between compact structures are actually correlations, by compactness.

The furthermore statement in part \emph{(i)} of the previous proposition implies that $(\mathrm{Mod}(T,\leq \omega^-),\rho_\Delta)$ is complete, so we just need to show that for compact structures, $\frk{M} \approxx_\Delta \frk{N}$ if and only if $\frk{M} \cong \frk{N}$. But this is easy: Take an ultraproduct of the structures $(\frk{M},\frk{N},R_i)$ where $R_i$ is the correlation taken as a definable subset of $\frk{M}$ and $\frk{N}$ with $\mathrm{dis}_\Delta(R_i) \leq 2^{-i}$. Then you will get a structure of the form $(\frk{M},\frk{N},R_\omega)$ with $R_\omega$ the graph of an isomorphism.
\end{proof}

There is an example of an irregular distortion system $\Delta$ for a theory $T$ such that $(\mathrm{Mod}(T,\leq\kappa),\rho_\Delta)$ is not complete (see Section \ref{sec:Irreg}).

\subsection{Induced Metrics on Type Space}

Any distortion system $\Delta$ for some theory $T$ naturally induces a family of topometrics on the type spaces of $T$. We will define this for one-sorted theories for readability, but the extension to many sorted theories is obvious.

\begin{defn}\label{defn:induced-metrics-on-type-space}
Let $\Delta$ be a distortion system for $T$. For each $\lambda$ and any $p,q\in S_\lambda$, let
$$\delta_\Delta^\lambda(p,q) = \inf\{\rho_\Delta(\frk{M},\bar{m};\frk{N},\bar{n}):\bar{m}\models p,\bar{n}\models q\}.$$
We will typically drop the $\lambda$ when it is clear from context.
\end{defn}

We're using $\delta$ instead of $d$ to emphasize that $\delta$ is not the natural counterpart of the ordinary $d$-metric. Instead it is the natural counterpart of
\[
\delta(p,q) = \begin{cases} 0 & p=q \\
				 \infty & p \neq q
				 \end{cases},
% $\delta(p,q)=0$ if $p=q$ and $\delta(p,q)=\infty$ if $p\neq q$,
 \]
  i.e.\ a metric encoding equality of types. Later on there will be a metric, $d_\Delta$, derived from $\delta_\Delta$ that plays an analogous role to the $d$-metric on types. In some very special cases, such as Gromov-Hausdorff distance or Kadets distance, we will have $\delta_\Delta = d_\Delta$. %(even over parameters, once we understand what that means). 
  This in turn will entail some nice properties of $\Delta$-approximate isomorphism.

$\delta_\Delta^\lambda$ enjoys the following properties.

\begin{prop} Let $\Delta$ be a distortion system for $T$.
\begin{itemize}
    \item[(i)] $\delta^\lambda_\Delta(p,q)=\sup_{\varphi \in \Delta}|\varphi(p)-\varphi(q)|$, where $\varphi(r)$ means the unique value of $\varphi$ entailed by the type $r$.
    \item[(ii)] $\delta^\lambda_\Delta$ is a topometric on $S_\lambda(T)$, i.e.\ it is lower semi-continuous and refines the topology.
    \item[(iii)] (Monotonicity) For any $p,q\in S_{\lambda+\alpha}(T)$, if $p^\prime, q^\prime \in S_\lambda (T)$ are restrictions of $p$ and $q$ to the first $\lambda$ variables, then $\delta_\Delta^\lambda(p^\prime,q^\prime)\leq \delta_\Delta^{\lambda+\alpha}(p,q)$.
    \item[(iv)] For any $p,q \in S_\lambda(T)$ and any permutation $\sigma: \lambda \rightarrow \lambda$, $d_\Delta^n(p,q) = d_\Delta^n(\sigma p, \sigma q)$, where $\sigma r$ is the type $r(x_{\sigma(0)},x_{\sigma(1)},\dots)$.
    \item[(v)] (Extension) For any $p,q \in S_\lambda(T)$ and $p^\prime \in S_{\lambda + \alpha}(T)$ extending $p$, there exists a $q^\prime \in S_{\lambda + \alpha}(T)$ extending $q$ such that $d_\Delta^\lambda(p,q) = d_\Delta^{\lambda + \alpha}(p^\prime, q^\prime)$.
    \item[(vi)] For any infinite $\lambda$, $\delta^\lambda_\Delta(p,q) = \sup \delta^n_\Delta(p^\prime,q^\prime)$, where $p^\prime$ and $q^\prime$ range over restrictions of $p$ and $q$ to finite tuples of variables.
\end{itemize}
\end{prop}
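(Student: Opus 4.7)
The plan is to prove (i) first, since (ii)--(vi) then follow quickly from it together with the closure properties of distortion systems.

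The inequality $\delta_\Delta^\lambda(p,q)\ge \sup_{\varphi\in\Delta}|\varphi(p)-\varphi(q)|$ is immediate: for any correlation $R\ni(\bar m,\bar n)$ with $\bar m\models p$, $\bar n\models q$, and any $\varphi\in\Delta$, $|\varphi(p)-\varphi(q)|=|\varphi^\frk{M}(\bar m)-\varphi^\frk{N}(\bar n)|\le\mathrm{dis}_\Delta(R)$; taking $\sup_\varphi$ and then $\inf_R$ yields the bound. For the reverse inequality, write $r=\sup_{\varphi\in\Delta}|\varphi(p)-\varphi(q)|$ and fix $\varepsilon>0$. I would take sufficiently saturated models $\frk{M},\frk{N}$ of matching density character realizing $p,q$ via tuples $\bar m,\bar n$, and build a correlation $R\ni(\bar m,\bar n)$ between them by transfinite back-and-forth maintaining the invariant that for every matched pair $(\bar a,\bar b)\in R$ and every $\varphi\in\Delta$, $|\varphi^\frk{M}(\bar a)-\varphi^\frk{N}(\bar b)|\le r+\varepsilon$.

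The main obstacle is the extension step. Given a new $a\in\frk{M}$, finitely many previously-matched $\bar a_i\mapsto\bar b_i$, and $\Delta$-formulas $\varphi_i$, form the formula $\theta(\bar x,y)\coloneqq\max_i|\varphi_i(\bar x_i,y)-\varphi_i^\frk{M}(\bar a_i,a)|$. This lies in $\Delta$ since $\Delta$ contains constants and is closed under renaming of variables and $1$-Lipschitz connectives; hence $\inf_y\theta\in\Delta$ by quantifier closure. The value $\inf_y\theta(\bar a,y)$ vanishes (witnessed by $y=a$), so the invariant forces $\inf_y\theta(\bar b,y)\le r+\varepsilon$, and the saturation of $\frk{N}$ produces a suitable $b$; the reverse direction of the back-and-forth is symmetric. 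Exhausting both sides yields a correlation with $\mathrm{dis}_\Delta(R)\le r+\varepsilon$, and letting $\varepsilon\to 0$ concludes (i).

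With (i) in hand, the remaining parts are straightforward. For (ii), (i) expresses $\delta_\Delta^\lambda$ as a pointwise supremum of continuous functions $(p,q)\mapsto|\varphi(p)-\varphi(q)|$, making it lower semi-continuous; that it refines the logic topology follows from the logical completeness of $\Delta$ (as in Lemma~\ref{lem:coarsest}). Items (iii) and (iv) follow directly from (i), using the closure of $\Delta$ under renaming of variables and the observation that a $\Delta$-formula in the first $\lambda$ variables remains a $\Delta$-formula in $\lambda+\alpha$ variables. Item (vi) is immediate because $\Delta$-formulas are finitary, so each value $\varphi(p)$ equals $\varphi(p')$ for some finite restriction $p'$. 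For (v), the extension $q'\supseteq q$ is built by the same trick used for (i): for any finite list $\varphi_1,\dots,\varphi_k\in\Delta$ with $c_i\coloneqq\varphi_i(p')$, the formula $\inf_{\bar z}\max_i|\varphi_i(\bar y,\bar z)-c_i|$ is a $\Delta$-formula whose value on $p$ is $0$ (witnessed by the realization of $p'$) and hence, by (i), whose value on $q$ is at most $r=\delta_\Delta^\lambda(p,q)$; compactness and saturation then jointly realize the full type specifying $q'$, which gives $\delta_\Delta^{\lambda+\alpha}(p',q')\le r$, matching the monotonicity bound from (iii).
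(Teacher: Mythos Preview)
Your proof is correct, but you and the paper swap the roles of (i) and (v). The paper proves (v) \emph{first}, semantically: it takes an ultraproduct of approximate witnesses $(\frk{M}_i,\frk{N}_i,R_i)$ to obtain an exact correlation witnessing $\delta_\Delta^\lambda(p,q)$, passes to an $\aleph_1$-saturated extension, realizes $p'$ there, and reads off $q'$ from the correlation. Only then does it derive (i) from (v) via a back-and-forth Henkin construction. You instead prove (i) directly, using the syntactic closure of $\Delta$ under $\inf_y$ and $1$-Lipschitz connectives to carry out the extension step of the back-and-forth inside saturated models, and then obtain (v) from (i) by compactness. Your route avoids the ultraproduct entirely and makes the role of quantifier closure in $\Delta$ explicit; the paper's route avoids having to arrange saturated models of matching density character (a point you gloss over, though it is standard) and makes the link between $\delta_\Delta$ and the ambient two-sorted structures $(\frk{M},\frk{N},R)$ more visible, which the paper exploits later in \S1.2. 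Both arguments are essentially the same back-and-forth at heart; the difference is whether the single-step extension is justified syntactically (your $\inf_y\theta\in\Delta$) or semantically (the paper's exact-witness correlation).
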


\begin{proof}
\emph{(ii)}-\emph{(iv)} and \emph{(vi)} all follow immediately from \emph{(i)}.

It will be easier to prove \emph{(i)} once we have \emph{(v)}. To see that \emph{(v)} holds, let $(\frk{M}_i,\bar{m}_i,\frk{N}_i,\bar{n}_i,R_i)$ be structures such that $\bar{m}\models p$, $\bar{n}\models q$, $R_i$ is a correlation between $\frk{M}_i$ and $\frk{N}_i$ with $(\bar{m}_i,\bar{n}_i)\in R_i$ and $\mathrm{dis}_\Delta(R_i) \leq \delta^\lambda_\Delta(p,q)+2^{-i}$. By taking an ultraproduct of these structures we get an exact witness, i.e.\ a structure $(\frk{M},\bar{m},\frk{N},\bar{n},R)$ with $\bar{m}\models p$, $\bar{n}\models q$, and $R$, a correlation between $\frk{M}$ and $\frk{N}$ with $\mathrm{dis}_\Delta(R)=\delta^\lambda_\Delta(p,q)$.

Now by compactness $(\frk{M},\bar{m},\allowbreak\frk{N},\bar{n},R)$ has a $\aleph_1$-saturated elementary extension $(\frk{M}^\prime,\bar{m}^\prime,\frk{N}^\prime,\bar{n}^\prime,R^\prime)$ in which $\frk{M}^\prime$ realizes $p^\prime(\bar{x},\bar{m}^\prime)$ with some tuple $\bar{a}$. By $\aleph_1$-saturation, $R^\prime$ is a correlation, so we have that there is some tuple $\bar{b}$ with $(\bar{a},\bar{b})\in R^\prime$. So we can take $q^\prime = \mathrm{tp}(\bar{n}^\prime \bar{b})$ and get the required extension.

Now for \emph{(i)}. It is clear that $\delta^\lambda_\Delta(p,q) \geq \sup_{\varphi \in \Delta}|\varphi(p)-\varphi(q)|$. All we need, given $p,q$ with $\sup_{\varphi \in \Delta}|\varphi(p)-\varphi(q)| = \e$, is to build a structure $(\frk{M},\bar{m},\frk{N},\bar{n},R)$  witnessing that $\delta^\lambda_\Delta(p,q)\leq \e$, but this is almost immediate from the extension property, by a `back-and-forth Henkin construction.' So we have shown \emph{(i)} as well.
\end{proof}

As it happens, given a family of metrics $\{\delta^n\}_{n<\omega}$ satisfying some of these properties we can find a distortion system giving the same metrics. Again this is for a single-sorted theory but the extension to many-sorted theories is obvious.

Metrics satisfying these properties are very similar to  the `perturbation \mbox{(pre-)}systems' of \cite{OnPert}, but what we require here is more than a perturbation pre-system and less than a perturbation system.

\begin{prop} \label{prop:recon-met}
Suppose that $\{\delta^n\}_{n<\omega}$ is a family of topometrics on $S_n(T)$ such that:
\begin{itemize}
     \item (Monotonicity) For any $p,q\in S_{n+1}(T)$, if $p^\prime, q^\prime \in S_n (T)$ are restrictions of $p$ and $q$ to the first $n$ variables, then $\delta^n(p^\prime,q^\prime)\leq \delta^{n+1}(p,q)$.
    \item For any $p,q \in S_n(T)$ and any permutation $\sigma: n \rightarrow n$, $\delta^n(p,q) = \delta^n(\sigma p, \sigma q)$, where $\sigma r$ is the type $r(x_{\sigma(0)},x_{\sigma(1)},\dots,x_{\sigma(n-1)})$.
    \item (Extension) For any $p,q \in S_n(T)$ and $p^\prime \in S_{n+1}(T)$ extending $p$ there exists a $q^\prime \in S_{n+1}(T)$ extending $q$ such that $\delta^n(p,q) = \delta^{n+1}(p^\prime, q^\prime)$.
\end{itemize}

Then there is a distortion system $\Delta(\delta)$ (namely the collection of $\delta$-$1$-Lipschitz formulas) such that $\delta = \delta_{\Delta(\delta)}$.

Furthermore for any distortion system $\Delta$ we have that $\Delta = \Delta(\delta_\Delta)$.
\end{prop}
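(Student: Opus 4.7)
The plan is to define $\Delta(\delta)$ as the collection of all formulas $\varphi(\bar x)$ (at each arity $n$) that are $1$-Lipschitz with respect to $\delta^n$ on $S_n(T)$, and then to establish two things: that $\Delta(\delta)$ is a distortion system with $\delta_{\Delta(\delta)} = \delta$, and that for any distortion system $\Delta$ one has $\Delta = \Delta(\delta_\Delta)$. Most closure properties fall out automatically: renaming of variables from permutation invariance of $\delta^n$; closure under $1$-Lipschitz connectives from the fact that the composition of a $1$-Lipschitz map (in the max metric) with $\delta$-$1$-Lipschitz formulas is $\delta$-$1$-Lipschitz; closure under uniformly convergent limits because the Lipschitz bound passes to the limit; closure under logical equivalence modulo $T$ because everything lives on type space.

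The first nontrivial closure is under quantification, and this is exactly where the extension property is used. Given $\varphi(\bar x, y) \in \Delta(\delta)$, $p, q \in S_n(T)$, and $\varepsilon > 0$, pick $p' \in S_{n+1}(T)$ extending $p$ with $\varphi(p') < \inf_y \varphi(p,\cdot) + \varepsilon$; the extension property gives $q' \in S_{n+1}(T)$ extending $q$ with $\delta^{n+1}(p', q') = \delta^n(p, q)$, and the Lipschitz bound for $\varphi$ yields $\inf_y \varphi(q,\cdot) \leq \varphi(q') \leq \varphi(p') + \delta^n(p, q)$. Symmetrizing and letting $\varepsilon \to 0$ gives $|\inf_y\varphi(p,\cdot) - \inf_y\varphi(q,\cdot)| \leq \delta^n(p,q)$ as required, and the $\sup_y$ case follows by negation.

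The main obstacle is simultaneously verifying logical completeness of $\Delta(\delta)$ and the equality $\delta = \delta_{\Delta(\delta)}$. The inequality $\delta_{\Delta(\delta)} \leq \delta$ is immediate from the definition. For the reverse, given distinct $p, q \in S_n(T)$ with $\delta^n(p, q) = r > 0$ and $\varepsilon > 0$, I must produce a logic-continuous $\delta^n$-$1$-Lipschitz formula whose values at $p$ and $q$ differ by at least $r - \varepsilon$. The strategy is to combine the lower semi-continuity of $\delta^n(\cdot, p)$ with normality of the compact Hausdorff space $S_n(T)$: the set $\{\delta^n(\cdot, p) \leq r - \varepsilon\}$ is logic-closed and does not contain $q$, so Urysohn produces a logic-continuous nonnegative $\chi$ vanishing there and equal to $r - \varepsilon$ at $q$. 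One then Lipschitz-regularizes against $\delta^n$ while preserving logic-continuity. This step is delicate precisely because infima involving the lsc function $\delta^n$ need not themselves be logic-continuous; the control will come from compactness together with an iterated regularization argument coordinated with the extension property, which lets one realize $(p, q)$ together with enough auxiliary data in an ambient structure so that the infima become realized at actual types. This is where the bulk of the work lies.

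For the uniqueness claim $\Delta = \Delta(\delta_\Delta)$, the inclusion $\Delta \subseteq \Delta(\delta_\Delta)$ is immediate from property (i) of the earlier proposition, namely $\delta_\Delta(p, q) = \sup_{\varphi \in \Delta} |\varphi(p) - \varphi(q)|$. For the reverse, given a $\delta_\Delta$-$1$-Lipschitz formula $\psi$, the evaluation map $\Phi\colon S_n(T) \to \mathbb{R}^\Delta$, $p \mapsto (\varphi(p))_{\varphi \in \Delta}$ (sup-norm on the codomain), factors $\psi$ as a $1$-Lipschitz function on its image; extending this by McShane--Whitney yields a $1$-Lipschitz connective, and using the earlier remark that $\Delta$ admits a uniformly dense subset of size $|\Lcal|$ one reduces to countably many coordinates, placing $\psi \in \Delta$ by closure under $1$-Lipschitz connectives and uniform limits.
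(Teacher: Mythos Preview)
Your treatment of the closure properties and of quantification matches the paper's and is correct. The genuine gap is exactly where you flag it: showing that $\Delta(\delta)$ is logically complete and that $\delta^n(p,q)=\sup\{|\varphi(p)-\varphi(q)|:\varphi\in\Delta(\delta)\}$. Your proposed route---Urysohn followed by ``Lipschitz-regularization against $\delta^n$,'' patched by ``an iterated regularization argument coordinated with the extension property''---is not an argument. The difficulty you name is real: the McShane regularization $\inf_r\bigl(\chi(r)+\delta^n(\cdot,r)\bigr)$ of a logic-continuous $\chi$ is $\delta^n$-$1$-Lipschitz but, since $\delta^n$ is only lower semi-continuous, need not be logic-continuous, and the extension property concerns projections between type spaces, not the realization of such infima. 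Nothing you wrote explains how to recover logic-continuity. The paper resolves this in one stroke by invoking Ben~Yaacov's Urysohn lemma for compact topometric spaces \cite{BYTopo2010}: if $F,G$ are disjoint closed sets with $d_{\inf}(F,G)>\varepsilon$, there is a logic-continuous $1$-Lipschitz $f:X\to[0,\varepsilon]$ with $F\subseteq f^{-1}(0)$ and $G\subseteq f^{-1}(\varepsilon)$. This immediately yields both logical completeness of $\Delta(\delta)$ and the identity $\delta^n(p,q)=\sup\{|f(p)-f(q)|:f\text{ logic-continuous, }1\text{-Lipschitz}\}$. You should either cite this result or reprove it; as written, your sketch does not.

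For the ``furthermore'' clause, your McShane--Whitney idea is in the right spirit but has a wrinkle: in this paper a connective is a \emph{product-continuous} map $\mathbb{R}^\omega\to\mathbb{R}$, and a $1$-Lipschitz extension in the sup metric on $\mathbb{R}^\omega$ need not be product-continuous, so the extended $F$ is not obviously a connective in the required sense. The paper's argument sidesteps this by staying finitary: from $\varphi(q)\le\varphi(p)+\delta_\Delta(p,q)$ and compactness of $S_n(T)$ one extracts, for each $p$ and $\varepsilon>0$, finitely many $\psi_1,\dots,\psi_k\in\Delta$ with $\varphi(q)\le\varphi(p)+\max_i|\psi_i(p)-\psi_i(q)|+\varepsilon$ for all $q$; a second compactness pass then uniformly approximates $\varphi$ by finite minima of such $\Delta$-formulas, placing $\varphi\in\Delta$ via closure under $1$-Lipschitz \emph{finitary} connectives and uniform limits. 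This avoids any appeal to infinitary McShane--Whitney.
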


\begin{proof}
Let $\Delta(\delta)$ be the collection of formulas that are $1$-Lipschitz with regards to $\delta$ (in the relevant variables). Note that by the monotonicity property $\Delta(\delta)$ is closed under adding dummy variables (i.e.\ if $\varphi(\bar{x})$ is $\delta$-$1$-Lipschitz in $S_n(T)$, then it is $\delta$-$1$-Lipschitz in $S_{n+1}(T)$). $\Delta(\delta)$ is also clearly closed under renaming variables, $1$-Lipschitz connectives, logical equivalence modulo $T$, and uniformly convergent limits. So all we need to do is show that $\Delta(\delta)$ is closed under quantification and that it is logically complete. 

By a result of Ben Yaacov \cite{BYTopo2010}, if $(X,d)$ is a compact topometric space and $F,G\subseteq X$ are disjoint closed sets with $d_{\inf}(F,G) \coloneqq \inf \{d(x,y):x\in F, y\in G\} > \e$, then there is a $1$-Lipschitz continuous function $f:X\rightarrow[0,\e]$ such that $F \subseteq f^{-1}(0)$ and $G \subseteq f^{-1}(\e)$. This in particular implies that for any type $p\in S_n(T)$, $p$ is determined entirely by $\{\varphi:p\vdash \varphi \in \Delta(\delta)\}$, i.e.\ that $\Delta(\delta)$ is logically complete. Another corollary of his result, as pointed out by him, is that $d(x,y)=\sup\{|f(x)-f(y)|:f:X\rightarrow\mathbb{R}\,1\text{-Lipschitz continuous}\}$ in any compact topometric space $X$, so we clearly have that $\delta^n = \delta^n_{\Delta(\delta)}$ for each $n<\omega$.

To see that $\Delta(\delta)$ is closed under quantification, let $\varphi(\bar{x},y)\in \Delta(\delta)$. It is sufficient to show that $\inf_y \varphi(\bar{x},y) \in \Delta(\delta)$. For any $p,q\in S_n(T)$ (where $n=|\bar{x}|$), find a realization $\bar{a}\models p$ in the monster model and then find $b$ such that $\models \varphi(\bar{a},b) = \inf_y \varphi(\bar{a},y)$. Let $p^\prime = \mathrm{tp}(\bar{a}b)$. Find $q^\prime$ extending $q$ such that $\delta(p^\prime,q^\prime)=\delta(p,q)$. Now we have that $|\varphi(p^\prime)-\varphi(q^\prime)|\leq \delta(p^\prime,q^\prime) = \delta(p,q)$, implying that $q^\prime(\bar{x},y) \models \varphi(\bar{x},y) \leq \varphi(p^\prime) + \delta(p,q)$. This implies that $q(\bar{x}) \models \inf_y\varphi(\bar{x},y) \leq \varphi(p^\prime) + \delta(p,q)$, so by symmetry we have that $|\inf_y\varphi(p,y)-\inf_y\varphi(q,y)|\leq \delta(p,q)$. Since we can do this for any $p,q$ we have that $\inf_y\varphi(\bar{x},y)$ is $\delta$-$1$-Lipschitz and $\inf_y\varphi(\bar{x},y)\in \Delta(\delta)$. 

Therefore $\Delta(\delta)$ is a distortion system.

For the furthermore part, we clearly have that every $\Delta$-formula  is $\delta_\Delta$-$1$-Lipschitz. We just need to show that every $\delta_\Delta$-$1$-Lipschitz formula is a $\Delta$\nobreakdash-formula.

Let $\varphi(\bar{x})$ be a $\delta_\Delta$-$1$-Lipschitz formula. Pick $p\in S_n(T)$ (where $n=|\bar{x}|$). We have that $\varphi(q) \leq \varphi(p) + \delta(p,q)$ for all $q$, so in particular 
$$\varphi(q) \leq \varphi(p) + \sup_{\psi \in \Delta}|\psi(p)-\psi(q)|,$$
for all $q$. For any $\e>0$, by compactness there must be a finite set $\{\psi_1,\dots,\psi_k\} \subset \Delta$ such that $$\varphi(q) \leq \varphi(p) + |\psi_1(p)-\psi_1(q)|\uparrow\dots\uparrow|\psi_k(p)-\psi_k(q)| + \e,$$
for all $q$. $|\psi_1(p)-\psi_1(\bar{x})|\uparrow\dots\uparrow|\psi_k(p)-\psi_k(\bar{x})| + \e$  is a $\Delta$-formula, so we have shown that 
$$\varphi(q) = \inf\{\psi(q) : \psi\in \Delta, \psi \geq \varphi\},$$
for all $q$. Now for each $i<\omega$, by compactness there must be a finite set $\{\psi_1^i,\dots,\allowbreak\psi_{k(i)}^i\} \subset \Delta$ such that 
$$\varphi(q) \leq \psi_1(q) \downarrow \dots \downarrow \psi_k(q) < \varphi(q) + 2^{-i}  $$
for all $q$.

So if we let $\chi_0 = \psi^0_1 \downarrow \dots \downarrow \psi^0_{k(0)}$ and $\chi_{j+1} = \chi_j \downarrow \psi^{j+1}_1 \downarrow \dots \downarrow \psi^{j+1}_{k(j+1)}$, we get that $\{\chi_j\}_{j<\omega}$ is a sequence of $\Delta$-formulas that uniformly converges to $\varphi$, so $\varphi\in\Delta$, as required.
\end{proof}

%REMOVED PART

\subsection{Theories of Approximate Isomorphism}
\label{sec:th-approx-iso}

Implicit in a lot of the arguments so far has been the fact that if $\Delta$ is a distortion system for $T$, then for any $\e$ there is a first-order theory whose models are precisely structures $(\frk{M},\frk{N},R)$, with $R$ a closed almost correlation between $\frk{M}$ and $\frk{N}$ such that $\mathrm{dis}_\Delta(R) \leq \e$. This is how notions of approximate isomorphism are typically presented, at least implicitly. There is some kind of ambient structure relating $\frk{M}$ and $\frk{N}$ witnessing a certain degree of closeness, such as a mutual embedding into a larger structure or a certain special kind of function between them. We will give a precise characterization of these theories in our context and show that $\Delta$ can be reconstructed from them.

\begin{defn}
If $\Delta$ is a distortion system for $T$, the for any $\e\in[0,\infty]$ let $\mathrm{Th}(\Delta,\e)$ be the common theory of all structures of the form $(\frk{M},\frk{N},R)$ with $\frk{M},\frk{N}\models T$ and closed $R\in\mathrm{cor}(\frk{M},\frk{N})$ with $\mathrm{dis}_\Delta(R)\leq \e$, where $R$ is taken as a family of definable subsets of $s(\frk{M})\times s(\frk{N})$ for sorts $s\in\mathcal{S}$.
\end{defn}

\begin{prop}
Let $\Delta$ be a distortion system for $T$. For any $\e\in [0,\infty]$, a triple $(\frk{M},\frk{N},R)\models \mathrm{Th}(\Delta,\e)$ if and only if $R$ is a closed almost correlation between $\frk{M}$ and $\frk{N}$ and $\mathrm{dis}_\Delta(R)\leq \e$.
\end{prop}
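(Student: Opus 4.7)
The proof naturally splits into two directions, and both hinge on recognizing that ``closed almost correlation'' and ``distortion at most $\e$'' are captured by concrete sentences over the signature with the distance predicate for $R$. Specifically, for each sort $s$ the sentences $\sup_{x \in s(\frk{M})} \inf_{y \in s(\frk{N})} d_R(x,y) = 0$ and its symmetric partner (in $\frk{N}$) say precisely that the closed set $R$ is a closed almost correlation, while for each $\varphi \in \Delta$ the sentence $\sup_{(\bar{x},\bar{y}) \in R} |\varphi(\bar{x}) - \varphi(\bar{y})| \leq \e$ (well-formed because $R$ is a definable set) bounds the contribution of $\varphi$ to the distortion. For the $(\Rightarrow)$ direction, I observe that every structure in the class defining $\mathrm{Th}(\Delta,\e)$ satisfies these sentences: genuine correlations are in particular almost correlations, and their distortion is bounded by $\e$ by hypothesis. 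Therefore each such sentence lies in $\mathrm{Th}(\Delta,\e)$ and holds in any model of it, giving the claim.

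For $(\Leftarrow)$, the strategy is to upgrade an almost correlation to a genuine correlation inside an elementary extension, mimicking the ultrapower argument used in the earlier proposition on almost $\Delta$-approximate isomorphism. Given a closed almost correlation $R$ with $\mathrm{dis}_\Delta(R) \leq \e$, I form a non-principal ultrapower $(\frk{M}^\ast,\frk{N}^\ast,R^\ast) = \prod_\mathcal{U}(\frk{M},\frk{N},R)$ indexed by $\omega$. This extension is elementary, and in a countable reduct containing any given target sentence together with the relevant $\Delta$-formulas it is $\aleph_1$-saturated. For each $m^\ast \in s(\frk{M}^\ast)$ the partial type $\{d_R(m^\ast,y) \leq 2^{-n}\}_{n<\omega}$ is consistent by the density axiom, hence realized by some $n^\ast$; since $R^\ast$ is the zero set of $d_R$ it is closed, so $(m^\ast,n^\ast)\in R^\ast$, and symmetrically in the other sort. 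So $R^\ast$ is a genuine closed correlation between $\frk{M}^\ast$ and $\frk{N}^\ast$, with $\mathrm{dis}_\Delta(R^\ast) \leq \e$ preserved by elementarity. Thus $(\frk{M}^\ast,\frk{N}^\ast,R^\ast)$ belongs to the defining class of $\mathrm{Th}(\Delta,\e)$ and hence models it; elementary equivalence transfers this back to $(\frk{M},\frk{N},R)$.

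The main technical obstacle is the saturation-in-a-countable-reduct maneuver when $\Lcal$ is uncountable, since a single $\omega$-indexed ultrapower need not be $\aleph_1$-saturated globally. However, this is exactly the packaging used in the earlier ultrapower proposition in this section, so it reduces to careful bookkeeping: for each target sentence $\sigma \in \mathrm{Th}(\Delta,\e)$, cut down to a countable sublanguage containing $\sigma$, the distance predicate for $R$, and enough of $\Delta$ to witness the distortion bound, and run the argument there. No new ideas beyond those already deployed in the excerpt are required.
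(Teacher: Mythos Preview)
Your proposal is correct and follows essentially the same approach as the paper: the $(\Rightarrow)$ direction identifies the relevant axiomatizable conditions, and the $(\Leftarrow)$ direction passes to a saturated elementary extension in which the almost correlation becomes a genuine correlation, then transfers back by elementarity. The only cosmetic difference is that the paper simply invokes an $\aleph_1$-saturated elementary extension directly (which exists regardless of $|\Lcal|$), avoiding your countable-reduct bookkeeping for the ultrapower---and in fact your bookkeeping is unnecessary even as written, since the types you need to realize already live in the single countable reduct containing the distance predicate for $R$, and the distortion bound transfers to the ultrapower formula-by-formula via elementarity.
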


\begin{proof}
$(\Rightarrow):$ Assume that $(\frk{M},\frk{N},R)\models \mathrm{Th}(\Delta,\e)$. Clearly we have that $\frk{M},\frk{N}\models T$ and that for all $(\bar{n},\bar{m})\in R$, $\delta_\Delta(\mathrm{tp}(\bar{n}),\mathrm{tp}(\bar{m}))\leq \e$, so the only thing to really check is that $R$ is an almost correlation. This follows because it is equivalent to the following first-order axiom schema:
    $$\sup_{x\in s(\frk{M})}\inf_{y\in s(\frk{N})} R_s(x,y)\uparrow \sup_{y\in s(\frk{N})} \inf_{x\in s(\frk{M})} R_s(x,y),$$
for each sort $s\in\mathcal{S}$, where $R_s$ is the distance  predicate for the set $R\upharpoonright s$.

$(\Leftarrow):$ Take an $\aleph_1$-saturated elementary extension of $(\frk{M}^\prime,\frk{N}^\prime,R^\prime) \succeq (\frk{M},\frk{N},R)$. By $\aleph_1$-saturation, $R^\prime$ is a closed correlation between $\frk{M}^\prime$ and $\frk{N}^\prime$, and we still have that $\mathrm{dis}_\Delta(R^\prime)\leq \e$, so by definition $(\frk{M}^\prime,\frk{N}^\prime,R^\prime)\models \mathrm{Th}(\Delta,\e)$, thus by elementarity, $(\frk{M},\frk{N},R)\models \mathrm{Th}(\Delta,\e)$. 
\end{proof}

Unsurprisingly, $\Delta$ is recoverable from the theories $\mathrm{Th}(\Delta,\e)$. 
\begin{prop} \label{prop:recover}
Fix a theory $T$ and suppose that $\{A_\e\}_{\e\in [0,\infty]}$ is a family of first-order theories that satisfy the following conditions:
\begin{itemize}
    \item For every $\e$, every model of $A_\e$ is of the form $(\frk{M},\frk{N},R)$ with $\frk{M}$ and $\frk{N}$ models of $T$ where $R$ is a family of distance predicates $R_s$ on $s(\frk{M})\times s(\frk{N})$.
    \item A triple $(\frk{M},\frk{N},R)$ is a model of $A_\infty$ if and only if $R$ is a closed almost correlation between $\frk{M}$ and $\frk{N}$, namely if for each sort $s$, it satisfies 
   $$ \sup_{x\in s(\frk{M})}\inf_{y\in s(\frk{N})} R_s(x,y)\uparrow \sup_{y\in s(\frk{N})} \inf_{x\in s(\frk{M})} R_s(x,y).$$
    \item A triple $(\frk{M},\frk{N},R)$ is a model of $A_0$ if and only if $R$ is the graph of an isomorphism between $\frk{M}$ and $\frk{N}$.
    \item For each $\varepsilon < \delta$, $A_\varepsilon$ logically entails $A_\delta$ and $\bigcup_{\delta > \e}A_\delta$ is logically equivalent to $A_\e$.
    \item (Symmetry) If $(\frk{M},\frk{N},R)\models A_\e$, then $(\frk{N},\frk{M},R^{-1})\models A_\e$, where $R^{-1} \coloneqq \{(y,x):(x,y) \in R\}$.
    \item (Composition) For every $\e,\delta > 0$ if $(\frk{M},\frk{N},R)\models A_\e$ and $(\frk{N},\frk{O},S)\models A_\delta$ and $(\frk{M},\frk{N},R)$ and $(\frk{N},\frk{O},S)$ are $\aleph_1$-saturated, then $(\frk{M},\frk{O},\ol{S \circ R})\models A_{\e + \delta}$, where $\ol{S \circ R}$ is understood to mean the family of distance predicates of the metric closure of the relation $S \circ R$.
    \item (Sub-structure) If $(\frk{M},\frk{N},R)\models A_\e$ and $\frk{M}^\prime \preceq \frk{M}$ and $\frk{N}^\prime \preceq \frk{N}$ are elementary sub-structures such that $(\frk{M},\frk{N},R)$, $\frk{M}^\prime$, and $\frk{N}^\prime$ are all $\aleph_1$-saturated, and $R^\prime = R\upharpoonright \frk{M}^\prime \times_\mathcal{S} \frk{N}^\prime$ is a correlation, then $(\frk{M}^\prime,\frk{N}^\prime, R^\prime)\models A_\e$.
\end{itemize}
Then there is a distortion system $\Delta$ such that $A_\e \equiv \mathrm{Th}(\Delta,\e)$ for every $\e\in [0,\infty]$.
\end{prop}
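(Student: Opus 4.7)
The plan is to extract a distortion system from $\{A_\e\}$ by reconstructing the induced metrics on type spaces and then invoking Proposition~\ref{prop:recon-met}. For each $n<\omega$ and $p,q\in S_n(T)$, define
$$\delta^n(p,q)=\inf\{\e:\text{some }(\frk{M},\frk{N},R)\models A_\e\text{ contains }\bar{m}\models p,\bar{n}\models q\text{ with }(\bar{m},\bar{n})\in R\}.$$
Once $\{\delta^n\}_{n<\omega}$ is verified to satisfy the hypotheses of Proposition~\ref{prop:recon-met}, taking $\Delta=\Delta(\delta)$ (the $\delta$-$1$-Lipschitz formulas) yields a distortion system with $\delta^n_\Delta=\delta^n$ on each $S_n(T)$.

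The verifications for $\{\delta^n\}$ use the axioms of the statement fairly directly. Reflexivity $\delta^n(p,p)=0$ uses the $A_0$ clause via the diagonal correlation on a realization of $p$; symmetry is the symmetry axiom; the triangle inequality comes from the composition axiom after gluing two witnesses along a common $\aleph_1$-saturated middle model obtained via an ultrapower. Lower semi-continuity, the implication $\delta^n(p,q)=0\Rightarrow p=q$, and the refinement of the logic topology all follow from $A_\e\equiv\bigcup_{\delta>\e}A_\delta$ via ultraproducts (combined in the latter two cases with the $A_0$-is-isomorphism clause, which forces the ultralimit pair to have equal types). Monotonicity and permutation invariance are immediate, and the extension property is obtained by passing to an $\aleph_1$-saturated elementary extension of an $A_\e$-witness---which promotes the almost correlation to a full correlation---then realizing the prescribed extension on one side and using totality of the correlation to find its partner on the other.

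It remains to show $A_\e\equiv\mathrm{Th}(\Delta,\e)$. The forward direction is immediate: if $(\frk{M},\frk{N},R)\models A_\e$ then for every $(\bar{m},\bar{n})\in R$ we have $\delta^{|\bar{m}|}(\mathrm{tp}(\bar{m}),\mathrm{tp}(\bar{n}))\leq\e$, so $|\varphi(\bar{m})-\varphi(\bar{n})|\leq\e$ for every $\varphi\in\Delta$, hence $\mathrm{dis}_\Delta(R)\leq\e$. The hard direction is the substantive part. Given $(\frk{M},\frk{N},R)\models\mathrm{Th}(\Delta,\e)$, since $A_\e\equiv\bigcup_{\delta>\e}A_\delta$ it suffices to show $(\frk{M},\frk{N},R)\models A_\delta$ for every $\delta>\e$. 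I plan to pass to an $\aleph_1$-saturated elementary extension (in which $R$ becomes a correlation) and argue by compactness that the elementary diagram of this structure is consistent with $A_\delta$: any finite fragment of the diagram mentions only finitely many $R$-pairs, and the definition of $\delta^n$ together with $\mathrm{dis}_\Delta(R)\leq\e<\delta$ realizes these in an $A_\delta$-model, with distinct fragments reconciled by iterated application of the extension property. Taking an $\aleph_1$-saturated model of this joint theory and applying the sub-structure axiom transfers $A_\delta$ down to the saturated extension, and then elementarity transfers it to $(\frk{M},\frk{N},R)$. The main obstacle is this compactness argument, specifically the coherent assembly of an $A_\delta$-realization of an arbitrary finite diagram fragment containing possibly many overlapping $R$-pairings, which is handled by iterating the extension property of $\delta^n$ across the relevant pairs.
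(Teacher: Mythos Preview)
Your proposal is correct and takes essentially the same approach as the paper: both extract $\delta^n$ from $\{A_\e\}$, verify the hypotheses of Proposition~\ref{prop:recon-met}, and handle the hard implication by embedding a saturated $(\frk{M},\frk{N},R)$ into a large model of $A_\e$ and invoking the sub-structure axiom. The only cosmetic difference is that the paper builds the large model as an explicit ultraproduct indexed by finite tuples from $R$, whereas you phrase the same construction as a compactness argument (and detour through $A_\delta$ for $\delta>\e$ before using continuity).
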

\begin{proof}
First we will show that $\{A_\e\}_{\e\in[0,\infty]}$ induces a family of topometrics $\{\delta^n_A\}_{n<\omega}$ satisfying the conditions of Proposition \ref{prop:recon-met}. Then we will show that $A_\e =\allowbreak \mathrm{Th}(\Delta(\delta),\e)$ for every $\e\in [0,\infty]$.

Let $\delta_A(p,q) = \inf\{\e:(\frk{M},\frk{N},R)\models A_\e, R\in \mathrm{cor}(\frk{M},\bar{m};\frk{N},\bar{n}), \bar{m} \models p, \bar{n} \models q \}$.

It is clear that $\delta_A(p,q) \geq 0$ and that $\delta_A(p,p) = 0$. By symmetry we have that $\delta_A(p,q)=\delta_A(q,p)$.

Pick $p,q,r\in S_n(T)$. Pick $(\frk{M},\frk{N},R)$ witnessing $\delta_A(p,q) \leq \e$ and $(\frk{N}^\prime, \frk{O},S)$ witnessing that $\delta_A(q,r) \leq \delta$. By passing to elementary extensions we can find triples $(\frk{M}^\prime,\frk{N}^{\prime \prime},R^\prime)$ and $(\frk{N}^{\prime \prime},\frk{O}^\prime,S^\prime)$ with tuples $\bar{m}$ and $\bar{n}$ such that $\frk{M}^\prime \ni \bar{m} \models p$ and $\frk{N}^{\prime\prime}\ni \bar{n}\models q$, and $\frk{O}^\prime \ni \bar{o}\models r$ such that all structures involved are $\aleph_1$-saturated. By composition we have that $(\frk{M}^\prime,\frk{O}^\prime,\ol{S\circ R})\models A_{\e+\delta}$, witnessing that $\delta(p,r)\leq \e+\delta$. Since we can do this for any $\e$ and $\delta$, we have that $\delta(p,r)\leq \delta(p,q)+\delta(q,r)$.

Finally by taking ultraproducts of witnesses, it is clear that if $\delta(p,q)=0$ then $p=q$.

So we have that $\delta^n$ are metrics. They are clearly lower semi-continuous, again by taking ultraproducts of relevant witnesses, so they are a family of topometrics. Now we just need to verify the other conditions of Proposition \ref{prop:recon-met}. Monotonicity and permutation invariance are both clear. For extension, suppose that $(\frk{M},\frk{N},R)$ is an exact witness for the value of $\delta^n(p,q)$, i.e.\ there are $\bar{m}\in \frk{M}$ and $\bar{n}\in\frk{N}$ such that $\bar{m}\models p$ and $\bar{n}\models q$ and $(\bar{m},\bar{n})\in R$, a correlation. Then by passing to a saturated enough elementary extension we can find $a$ such that $\models p^\prime(\bar{m},a)$. By picking some $b$ correlated to $a$ and picking $q^\prime = \mathrm{tp}(\bar{n}b)$, we get the required extension.

So we have that Proposition \ref{prop:recon-met} applies and $\Delta(\delta_A)$ is a distortion system with $\delta_A = \delta_{\Delta(\delta_A)}$.

So now clearly by construction we have that for any $\e \in [0,\infty]$, $A_\e \vdash \mathrm{Th}(\Delta(\delta_A),\e)$. So all we need to do is show that $\mathrm{Th}(\Delta(\delta_A),\e)\vdash A_\e$. Let $(\frk{M},\frk{N},R)\models \mathrm{Th}(\Delta(\delta_A),\e)$. Assume that is $(\frk{M},\frk{N},R)$ is $\aleph_1$-saturated, by passing to an elementary extension if necessary. By construction for every pair of finite tuples, $(\bar{m},\bar{n}) \in R$ there exists $(\frk{A}_{\bar{m},\bar{n}},\frk{B}_{\bar{m},\bar{n}},S_{\bar{m},\bar{n}})\models A_\e$ such that $(\bar{m},\bar{n})\in S_{(\bar{m},\bar{n})}$. Let $\mathcal{F}$ be the filter on $R^{<\omega}$ ordered by extensions of tuples and let $\mathcal{U}$ be an ultrafilter extending $\mathcal{F}$. Take the ultraproduct $(\frk{A}^\prime,\frk{B}^\prime,S^\prime)=\prod_{(\bar{n},\bar{m})\in R^{<\omega}} (\frk{A}_{\bar{m},\bar{n}},\frk{B}_{\bar{m},\bar{n}},S_{\bar{m},\bar{n}}) /\mathcal{U}$ and assume that this is $\aleph_1$-saturated (taking an elementary extension if necessary). By construction we have that $\frk{M}\preceq \frk{A}^\prime$, $\frk{N}\preceq \frk{B}^\prime$, and $R = S^\prime \upharpoonright \frk{M}\times_\mathcal{S} \frk{N}$ is a correlation, so by the sub-structure property we have that $(\frk{M},\frk{N},R)\models A_\e$.
Since we can do this for any theory completing $\mathrm{Th}(\Delta,\e)$, we have that $\mathrm{Th}(\Delta,\e)\vdash A_\e$, so $\mathrm{Th}(\Delta,\e) = A_\e$ as required.
\end{proof}

Now we can finally tie up a loose end. We will need the following fact.

\begin{fact}\label{fact:Approx_Iso_in_Cts_Logic:1}
A definable set is the graph of a definable function if and only if it is the graph of a function in every model. It is sufficient to check $\aleph_1$-saturated models.
\end{fact}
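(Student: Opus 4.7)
The forward direction is immediate from the definition of a definable function: if $D$ is the graph of a definable function $f$, then in every $\frk{M} \models T$ there is a unique $b = f(\bar{a})$ with $(\bar{a},b) \in D$, so $D$ is a graph of a function. For the converse I would use the standard characterization that a definable set $D$ with distance predicate $d_D(\bar{x},y)$ is the graph of a definable function if and only if (i) $\sup_{\bar{x}}\inf_y d_D(\bar{x},y) = 0$ in every model, and (ii) for every $\e>0$ there is a $\delta>0$ such that in every $\frk{M}\models T$, whenever $d_D^\frk{M}(\bar{a},b_1)\leq \delta$ and $d_D^\frk{M}(\bar{a},b_2)\leq\delta$, then $d^\frk{M}(b_1,b_2)\leq\e$. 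Given (i) and (ii), the map $\bar{a}\mapsto$ (the unique $b$ with $d_D(\bar{a},b)=0$) is well-defined, and formulas $\varphi(\bar{x},f(\bar{x}))$ can be built as uniform limits of formulas in $d_D$ and $\varphi$.

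The core task is therefore to establish (i) and (ii) from the assumption that $D$ is the graph of a function in every $\aleph_1$-saturated model. For (i), if it failed in some model it would fail in an $\aleph_1$-saturated elementary extension, producing $\bar{a}$ with no $b$ at $d_D$-distance $0$, contradicting totality of the graph there. For (ii), assume it fails for some $\e>0$. Then for each $n<\omega$ there is a model $\frk{M}_n$ and a triple $(\bar{a}_n,b_n^1,b_n^2)$ with $d_D^{\frk{M}_n}(\bar{a}_n,b_n^j)\leq 2^{-n}$ and $d^{\frk{M}_n}(b_n^1,b_n^2)\geq \e$. Take a non-principal ultraproduct $\frk{M}^\ast = \prod_n \frk{M}_n/\mathcal{U}$; the diagonal witnesses give $(\bar{a}^\ast,b^{\ast 1},b^{\ast 2})$ with $d_D(\bar{a}^\ast,b^{\ast j})=0$ and $d(b^{\ast 1},b^{\ast 2})\geq \e$. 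Since $\frk{M}^\ast$ is $\aleph_1$-saturated (passing to a further saturated elementary extension and, if necessary, a countable reduct of the language), this contradicts the assumption that $D^{\frk{M}^\ast}$ is a graph of a function.

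The only delicate point — and the one I would be most careful about — is verifying that the characterization via (i) and (ii) really is equivalent to $D$ being the graph of a \emph{definable} function in the continuous setting, where one must show that the predicates $\sup_y(r \dotdiv d_D(\bar{x},y)) \uparrow (\varphi(\bar{x},y) \dotdiv s)$ (and similar) converge uniformly as $r,s$ vary, using (ii), to predicates expressing $\varphi(\bar{x},f(\bar{x}))$. Once that is established the ultraproduct step above closes the argument, and the final sentence of the fact (sufficiency of $\aleph_1$-saturated models) is built into the same argument, since the extraction of the contradictory witnesses only ever used saturation of the ultraproduct.
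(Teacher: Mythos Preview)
The paper does not prove this statement: it is labeled a \emph{Fact} and stated without argument, treated as a standard result from the continuous model theory literature (implicitly \cite{MTFMS}). There is therefore no ``paper's own proof'' to compare your proposal against.

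That said, your sketch is the standard argument and is essentially correct. A couple of small remarks. First, for condition (i) you do not even need to pass to a saturated extension: $\sup_{\bar{x}}\inf_y d_D(\bar{x},y)$ is a sentence, so if it is positive in some model it is positive in every elementarily equivalent model, in particular in an $\aleph_1$-saturated one, and then by saturation you realize the type of an $\bar{a}$ with no $b$ in $D$ above it. Second, in your ultraproduct step for (ii), the parenthetical about ``passing to a further saturated elementary extension and, if necessary, a countable reduct'' is unnecessary here: the relevant formulas ($d_D$, $d$) live in a countable fragment already, and any ultraproduct over a countably incomplete ultrafilter is $\aleph_1$-saturated in that fragment, which is all you need. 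Finally, the ``delicate point'' you flag---that (i) and (ii) suffice for $f$ to be definable---is exactly Proposition~9.25 of \cite{MTFMS} (or the discussion around it), so in a paper like this one would simply cite that rather than rederive the uniform-limit construction.
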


\begin{customprop}{\ref{prop:nice-regular}}
\hypertarget{repeated}{Let $\Delta$ be a distortion system.}
\begin{itemize}
    \item [(ii)] $\Delta$ is functional if and only if there is an $\e>0$ such that for any $\delta > 0$ there is a formula $\varphi(x,y)\in \Delta$ such that for any $\frk{M}\models T$ and  $a,b\in \frk{M}$, $\varphi^\frk{M}(a,a)=0$ and if $\varphi^\frk{M}(a,b) < \e$, then $d^\frk{M}(a,b)<\delta$.
\end{itemize}
\end{customprop}

\begin{proof}%[Proof of Proposition \ref{prop:nice-regular} part (ii).]

$(\Rightarrow):$ For any $\varphi(x.y) \in \Delta$, let $\chi_\varphi(x,y) =  \frac{1}{2}|\varphi(x,y)-\varphi(x,x)|$ and note that $\chi_\varphi$ is always a $\Delta$-formula.

Assume that for every $\e > 0$ there exists a $\delta > 0$ such that for any $\varphi \in \Delta$ there exists $a,b\in\frk{M}\models T$, either $\varphi^\frk{M}(a,a)\neq 0$ or ($\varphi^\frk{M}(a,b) < \e$ and $d^\frk{M}(a,b) \geq \delta$).  

In particular this implies that for every $\e > 0$ there exists a $\delta >0$ such that for any $\varphi_1,\dots,\varphi_k \in \Delta$ there exists $a,b\in \frk{M}\models T$, $\chi_{\varphi_1}^\frk{M}(a,b) \uparrow \dots \uparrow \chi_{\varphi_k}^\frk{M}(a,b) < \e$ and $d^\frk{M}(a,b)\geq \delta$. 

By compactness this implies that for any $\e>0$ there is a $\delta >0$ and $c,e\in\frk{N}\models T$ such that for every $\varphi \in \Delta$, $\chi_\varphi^\frk{N}(c,e)\leq \e$ and $d^\frk{M}(c,e)\geq \delta$. In particular this implies that $\delta_\Delta(\mathrm{tp}(ce),\mathrm{tp}(cc))\leq 2\e$. So we can build a structure witnessing this and we have that $\Delta$ cannot be functional at $\e > 0$. Since we can do this at any $\e >0$, $\Delta$ is not functional.

$(\Leftarrow):$ Assume that there is an $\e>0$ such that for any $\delta > 0$ there is a formula $\varphi_\delta(x,y)\in \Delta$ such that for any $a,b\in \frk{M}\models T$, if $\varphi^\frk{M}_\delta(a,b) < \e$, then $d^\frk{M}(a,b)<\delta$. Pick $0<\gamma<\e$ and let $(\frk{M},\frk{N},R)$ be an $\aleph_1$-saturated model of $\mathrm{Th}(\Delta,\gamma)$. For each $a,b\in\frk{M}$ and $c\in\frk{N}$ with $(a,c),(b,c)\in R$, we have that $|\varphi_\delta^\frk{M}(a,b)-\varphi_\delta^\frk{N}(c,c)|\leq \gamma$, so in particular $\varphi_\delta^\frk{M}(a,b) \leq \gamma < \e$. So we have that $d^\frk{M}(a,b)<\delta$. Since we can do this for any $\delta > 0$, we have that $d^\frk{M}(a,b)=0$ and $a=b$.

Therefore $R$ is the graph of a bijection in every $\aleph_1$-saturated model of $\mathrm{Th}(\Delta, \gamma)$. This implies that it is actually the graph of a definable bijection, so this fact must be true in every model of $\mathrm{Th}(\Delta,\gamma)$. By compactness this implies that there is a modulus $\alpha_\gamma$ such that in every model of $\mathrm{Th}(\Delta,\gamma)$, $R$ and $R^{-1}$ are $\alpha_\gamma$-uniformly continuous. So we have that every closed $R\in \mathrm{acor}(\frk{M},\frk{N})$ with $\mathrm{dis}_\Delta(R) < \e$ is the graph of a uniformly continuous bijection with uniformly continuous inverse, therefore $\Delta$ is functional.
\end{proof}

A corollary of this is that when checking functionality of $\Delta$ it is enough to check closed correlations, rather than closed almost correlations.

\section{Special Cases}

Here we will examine a few specific cases of notions of approximate isomorphism arising from distortion systems.

\subsection{Elementary and Finitary Gromov-Hausdorff-Kadets Distances}

\begin{defn}
Let $\Delta_0$ and $\Delta_1$ be distortion systems.

We say that $\Delta_1$ \emph{uniformly dominates} $\Delta_0$ if for every $\e>0$ there is a $\delta > 0$ such that if $\delta_{\Delta_1}(p,q) < \delta$ then $\delta_{\Delta_0}(p,q) < \e$. We may also say that $\Delta_0$ is \emph{coarser} than $\Delta_1$ or that $\Delta_1$ is \emph{finer} than $\Delta_0$.

If $\Delta_0$ and $\Delta_1$ uniformly dominate each other we say that they are \emph{uniformly equivalent}.
\end{defn}

Note that $\Delta_1$ uniformly dominates $\Delta_0$ if and only the collection of $\Delta_0$\nobreakdash-\hskip0pt formulas are u.u.c.\ with regards to $\delta_{\Delta_1}$. 

\begin{prop} \label{prop:fine-coarse}
Fix a signature $\Lcal$.
\begin{itemize}
    \item[(i)] There is a collection of formulas, $\mathrm{eGHK}_0$, such that for any $\Lcal$-theory $T$, $\mathrm{eGHK}_0$ generates the finest u.u.c.\ distortion system for $T$, up to uniform equivalence. Furthermore $\delta_{\mathrm{eGHK}_0} = d$, the $d$-metric on types.
    \item[(ii)] If $\Lcal$ is countable then there is a collection of formulas, $\mathrm{fGHK}_0$, such that for any $\Lcal$-theory $T$, $\mathrm{fGHK}_0$ generates the coarsest distortion system for $T$, up to uniform equivalence.
\end{itemize} 
\end{prop}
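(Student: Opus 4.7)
For part (i), I would take $\mathrm{eGHK}_0$ to be the set of all $\Lcal$-formulas that are $1$-Lipschitz with respect to the max $d$-metric on tuples in every $\Lcal$-pre-structure; this depends only on $\Lcal$. Closure under $1$-Lipschitz connectives and uniform limits visibly preserves $d$-$1$-Lipschitzness, and a short calculation shows the same for $\inf_y$ applied to a formula $1$-Lipschitz in $\bar{x}y$, so $\overline{\mathrm{eGHK}_0}$, taken modulo $T$, is exactly the collection of $\Lcal$-formulas that are $d$-$1$-Lipschitz in every model of $T$. To apply Proposition~\ref{prop:atom-comp} I verify atomic completeness: if $p \neq q$ are distinct quantifier-free types in $S_n(T)$, some atomic formula separates them, and uniform continuity of the $\Lcal$-symbols forces $d(p,q) > 0$; Ben Yaacov's topometric separation result (cited in the proof of Proposition~\ref{prop:recon-met}) then produces a $d$-$1$-Lipschitz continuous function on $S_n(T)$ separating $p$ and $q$, and this can be approximated by members of $\mathrm{eGHK}_0$ via the inf-convolution $\hat{\varphi}(\bar{x}) = \inf_{\bar{y}}[\varphi(\bar{y}) + \max_i d(x_i, y_i)]$, which always produces a $1$-Lipschitz formula from an arbitrary formula $\varphi$.

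Once $\overline{\mathrm{eGHK}_0}$ is known to be a distortion system, it is u.u.c.\ with common modulus $\alpha(t) = t$, and the same inf-convolution approximation yields $\delta_{\mathrm{eGHK}_0} = d$ (the inequality $\leq$ being immediate from $1$-Lipschitzness). For any u.u.c.\ distortion system $\Sigma$ with common modulus $\beta$ one then has $\delta_\Sigma(p,q) \leq \beta(d(p,q)) = \beta(\delta_{\mathrm{eGHK}_0}(p,q))$, so $\overline{\mathrm{eGHK}_0}$ uniformly dominates $\Sigma$ as required.

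For part (ii), I use countability of $\Lcal$: fix a countable collection $\{\varphi_n\}_{n < \omega}$ of $\Lcal$-formulas ranging over all finite arities, closed under variable permutations, uniformly dense in all $\Lcal$-formulas, and normalized so that $|\varphi_n| \leq 1$ via $[\cdot]_{-1}^1$. Set $\mathrm{fGHK}_0 = \{2^{-n}\varphi_n : n < \omega\}$. For any $T$ atomic completeness holds since the values $\{2^{-n}\varphi_n(p)\}_n$ determine $\{\varphi_n(p)\}_n$ and hence (by density) determine $p$, so by Proposition~\ref{prop:atom-comp}, $\overline{\mathrm{fGHK}_0}$ is a distortion system. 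The key observation is that $d_T^n(p,q) \coloneqq \sup_k 2^{-k}|\varphi_k(p) - \varphi_k(q)|$ is a \emph{continuous} metric on $S_n(T)$ compatible with the logic topology: the $N$-th partial sup differs from the full sup by at most $2^{-N+1}$, so the full sup is a uniform limit of continuous functions, and its zero set is the diagonal by density. Since each $2^{-n}\varphi_n$ is visibly $d_T$-$1$-Lipschitz one has $\overline{\mathrm{fGHK}_0} \subseteq \Delta(d_T)$; conversely one verifies that the family $\{d_T^n\}$ satisfies monotonicity, permutation invariance, and the extension property of Proposition~\ref{prop:recon-met}---extension being most easily arranged by defining $d_T^n$ as the restriction to $S_n(T)$ of an analogous metric on $S_\omega(T)$---giving $\delta_{\overline{\mathrm{fGHK}_0}} = d_T$. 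To show that $\overline{\mathrm{fGHK}_0}$ is the coarsest up to uniform equivalence, let $\Sigma$ be any distortion system and suppose for contradiction there are $\e > 0$ and sequences $p_k, q_k$ with $\delta_\Sigma(p_k,q_k) \to 0$ but $d_T(p_k,q_k) \geq \e$; pass by compactness of $S_n(T)$ to topological limits $p_k \to p$, $q_k \to q$; lower semi-continuity of $\delta_\Sigma$ gives $\delta_\Sigma(p,q) = 0$, hence $p = q$; continuity of $d_T$ then forces $d_T(p_k,q_k) \to 0$, a contradiction.

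The main obstacle in (i) is verifying atomic completeness of $\mathrm{eGHK}_0$, which requires combining Ben Yaacov's topometric separation theorem with the inf-convolution construction to upgrade $1$-Lipschitz continuous functions on the type space into $1$-Lipschitz $\Lcal$-formulas. In (ii), the subtle technical step is verifying that the metrics $d_T^n$ satisfy the three hypotheses of Proposition~\ref{prop:recon-met}---particularly the extension property, which is most naturally handled by defining $d_T^n$ as a restriction from the $\omega$-type space so that extension holds tautologically---after which the compactness-plus-lower-semi-continuity argument closes cleanly.
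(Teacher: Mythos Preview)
Your overall strategy is close to the paper's, but there is a genuine gap in your argument for (i), and your route for (ii), while ultimately salvageable, is more involved than it needs to be.

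\textbf{Part (i).} The problem is your appeal to Proposition~\ref{prop:atom-comp}. Atomic completeness asks that \emph{quantifier-free} types be determined by the values $\varphi(p)$ for $\varphi\in\Delta$; this only makes sense for those $\varphi$ whose value is fixed by the quantifier-free type, i.e.\ essentially the quantifier-free members of $\Delta$. Your separating formulas $\hat\varphi(\bar x)=\inf_{\bar y}[\varphi(\bar y)+\max_i d(x_i,y_i)]$ carry a quantifier, so $\hat\varphi(p)$ is not determined by a quantifier-free $p$. Worse, the quantifier-free $1$-Lipschitz formulas can genuinely fail to separate quantifier-free types: take $\Lcal$ with a single unary predicate $P$ whose modulus is $\omega(t)=\sqrt t$. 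For one free variable the only quantifier-free formulas are continuous functions of $P(x)$, and $1$-Lipschitzness in every $\Lcal$-structure forces $|f(s)-f(t)|\le (s-t)^2$, hence $f$ is constant; so no quantifier-free $1$-Lipschitz formula distinguishes the types $P(x)=0$ and $P(x)=1$. Thus your $\mathrm{eGHK}_0$ is not atomically complete in general, and Proposition~\ref{prop:atom-comp} does not apply. The paper sidesteps this entirely: it never invokes atomic completeness for (i), but observes that every \emph{sentence} lies in $\mathrm{eGHK}_0$ (so types in distinct completions have $\delta_{\mathrm{eGHK}_0}=\infty$) and then uses Ben Yaacov's result to get $\delta_{\mathrm{eGHK}_0}=d$ within each completion, which already gives logical completeness directly. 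Your remaining claims (u.u.c.\ with modulus $\alpha(t)=t$, $\delta=d$, and domination of every u.u.c.\ $\Sigma$) are fine once logical completeness is established this way.

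\textbf{Part (ii).} Your argument works, but the detour through Proposition~\ref{prop:recon-met} is unnecessary and is exactly where you yourself flag the ``subtle technical step'' of verifying extension. Once $\overline{\mathrm{fGHK}_0}$ is a distortion system (via atomic completeness, which here is unproblematic provided your dense family actually contains atomic formulas), Proposition~\ref{prop:enlarge} already gives $\delta_{\overline{\mathrm{fGHK}_0}}=\delta_{\mathrm{fGHK}_0}=d_T$ by direct computation; no independent verification of the extension property is needed. Your compactness-plus-continuity argument for coarseness is a legitimate alternative to the paper's one-line appeal to Lemma~\ref{lem:coarsest}, and it is arguably more self-contained, but note that the paper's choice of \emph{atomic} predicates (rather than a dense family of arbitrary formulas) makes both atomic completeness and the domination step immediate: Lemma~\ref{lem:coarsest} says precisely that every distortion system controls atomic predicates, which is all $\mathrm{fGHK}_0$ contains.
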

\begin{proof}
\emph{(i):} For any $\Lcal$-formula $\varphi$, let $\chi_\varphi(\bar{x}) = \inf_{\bar{y}}\varphi(\bar{y})+d(\bar{x},\bar{y})$.  $\chi_\varphi$ has the property that it is $1$-Lipschitz in any $\mathcal{L}$-structure and furthermore that if $\varphi$ is $1$-Lipschitz in every model of $T$, then $T\models \chi_\varphi = \varphi$. Let $\mathrm{eGHK}_0 =\{\chi_\varphi : \varphi \in \Lcal\}$. Note that $\varphi \in \mathrm{eGHK}_0$ for any sentence $\varphi$.

By a previously mentioned result of Ben Yaacov \cite{BYTopo2010}, for any types $p,q$ in the same complete theory, $d(p,q) = \delta_{\mathrm{eGHK}_0}(p,q)$ (and for types in different complete theories $\delta_{\mathrm{eGHK}_0}(p,q) = \infty$). This implies that $\mathrm{eGHK}_0$ uniformly dominates any u.u.c.\ distortion system. 

\emph{(ii):} Let $\{P_i\}_{i<\omega}$ be an enumeration of all atomic $\mathcal{L}$-predicates. For each $i$, if $P_i$'s syntactic range is $[a,b]$, let $r_i = 1 + |a|\uparrow |b|$. 

Let $\mathrm{fGHK}_0 = \left\{\frac{1}{2^i r_i}  P_i\right\}_{i<\omega}$. This is clearly atomically complete. By Lemma \ref{lem:coarsest}, in any theory $T$, any distortion system for $T$ uniformly dominates $\ol{\mathrm{fGHK}_0}$.
\end{proof}

\begin{defn}
Fix a theory $T$ and an enumeration of atomic $\Lcal$-formulas.
\begin{itemize}
    \item $\mathrm{eGHK}= \ol{\mathrm{eGHK}_0}$, as defined in the proof of Proposition \ref{prop:fine-coarse}. $\rho_{\mathrm{eGHK}}(\frk{M},\frk{N})$ is the \emph{elementary Gromov-Hausdorff-Kadets distance between $\frk{M}$ and $\frk{N}$}.
    \item If $T$ is countable, let $\mathrm{fGHK}=\ol{\mathrm{fGHK}_0}$, as defined in the proof of Proposition \ref{prop:fine-coarse}. $\rho_{\mathrm{fGHK}}(\frk{M},\frk{N})$ is `the' \emph{finitary Gromov-Hausdorff-Kadets distance between $\frk{M}$ and $\frk{N}$}. \qedhere
\end{itemize}
\end{defn}

Clearly $\rho_{\mathrm{fGHK}}$ depends on the choice of enumeration, but $\approxx_{\mathrm{fGHK}}$ does not.

\begin{prop} \label{prop:fGHK}
  $\frk{M}\approxx_{\mathrm{fGHK}} \frk{N}$ if and only if for every $\e > 0$, finite collection $\mathcal{S}_0 \subseteq \mathcal{S}$ of sorts, and finite collection $\Sigma$ of atomic $\mathcal{L}$-formulas whose variables are from sorts in $\mathcal{S}_0$, there exists a correlation $R \subseteq \frk{M}\times_{\mathcal{S}_0}\frk{N}$ such that $\mathrm{dis}_{\Sigma}(R) < \e$.
\end{prop}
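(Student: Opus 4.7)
The plan is to reduce both directions to statements about $\mathrm{fGHK}_0$, since by Proposition~\ref{prop:enlarge} we have $\mathrm{dis}_{\mathrm{fGHK}} = \mathrm{dis}_{\mathrm{fGHK}_0}$ and hence $\rho_{\mathrm{fGHK}} = \rho_{\mathrm{fGHK}_0}$. The key observation powering the proof is the uniform tail bound: for every $i$, since $P_i$ takes values in $[a,b]$ with $r_i = 1 + |a|\uparrow|b|$, we have $|P_i^\frk{M}(\bar{m}) - P_i^\frk{N}(\bar{n})| \leq b - a \leq 2(|a|\uparrow|b|) < 2 r_i$, so $(2^i r_i)^{-1}|P_i^\frk{M}(\bar{m}) - P_i^\frk{N}(\bar{n})| < 2^{-i+1}$ for any tuples and any correlation whatsoever.

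For the $(\Rightarrow)$ direction, fix $\e > 0$, a finite $\mathcal{S}_0$, and a finite $\Sigma = \{P_{i_1},\dots,P_{i_k}\}$. Choose $\eta > 0$ with $2^{i_j}r_{i_j}\eta < \e$ for each $j\leq k$, and use $\frk{M}\approxx_{\mathrm{fGHK}_0}\frk{N}$ to obtain $R^*\in\mathrm{cor}(\frk{M},\frk{N})$ with $\mathrm{dis}_{\mathrm{fGHK}_0}(R^*) < \eta$. Then $|P_{i_j}^\frk{M}(\bar{m}) - P_{i_j}^\frk{N}(\bar{n})| < 2^{i_j}r_{i_j}\eta < \e$ whenever $(\bar{m},\bar{n})\in R^*$, and the restriction $R := \bigsqcup_{s\in\mathcal{S}_0} R^*\upharpoonright s \subseteq \frk{M}\times_{\mathcal{S}_0}\frk{N}$ remains a correlation with $\mathrm{dis}_\Sigma(R) < \e$.

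For the $(\Leftarrow)$ direction, fix $\eta > 0$ and pick $N$ large enough that $2^{-N+1} < \eta/2$. Set $\Sigma = \{P_0,\dots,P_{N-1}\}$ and let $\mathcal{S}_0$ be the finite collection of sorts that occur as variable sorts in some $P_i$ for $i < N$. By hypothesis, there exists a correlation $R_0 \subseteq \frk{M}\times_{\mathcal{S}_0}\frk{N}$ with $\mathrm{dis}_\Sigma(R_0) < \eta/2$. Extend $R_0$ to a full correlation $R\in\mathrm{cor}(\frk{M},\frk{N})$ by setting $R\upharpoonright s := s(\frk{M})\times s(\frk{N})$ for each $s \notin \mathcal{S}_0$ (assuming, as usual, that all sorts are non-empty). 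Then for $i < N$ every tuple witnessing $P_i$ has components in sorts from $\mathcal{S}_0$, so $(2^i r_i)^{-1}|P_i^\frk{M}(\bar{m}) - P_i^\frk{N}(\bar{n})| \leq \mathrm{dis}_\Sigma(R_0) < \eta/2$; and for $i \geq N$ the uniform tail bound gives $(2^i r_i)^{-1}|P_i^\frk{M}(\bar{m}) - P_i^\frk{N}(\bar{n})| < 2^{-i+1} \leq 2^{-N+1} < \eta/2$ irrespective of which sorts the variables of $P_i$ lie in. Hence $\mathrm{dis}_{\mathrm{fGHK}_0}(R) < \eta$, and since $\eta > 0$ was arbitrary, $\rho_{\mathrm{fGHK}}(\frk{M},\frk{N}) = 0$.

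The only subtle point is the interplay of the two scales in the $(\Leftarrow)$ direction: the hypothesis supplies an approximate correlation only for a finite $\Sigma$ on a finite sort set $\mathcal{S}_0$, whereas $\mathrm{fGHK}$ ranges over \emph{all} atomic predicates and \emph{all} sorts. The weighted tail automatically handles every predicate beyond $\Sigma$, and the freedom to extend $R_0$ arbitrarily to sorts outside $\mathcal{S}_0$ is what lets one assemble a global correlation; no back-and-forth or saturation argument is needed.
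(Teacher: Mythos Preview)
The paper states this proposition without proof, so there is nothing to compare against; your argument is correct and supplies exactly the routine details the paper omits. The tail estimate $(2^i r_i)^{-1}|P_i^{\frk{M}}-P_i^{\frk{N}}|<2^{-i+1}$ together with the freedom to extend a partial correlation by the full product on unused sorts is precisely the point, and you have handled both directions cleanly.
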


You may have noticed that the condition in Proposition \ref{prop:fGHK} makes sense in uncountable languages. Indeed there is a canonical uniform structure analog of $\rho_{\mathrm{fGHK}}$, given by a family of pseudo-metrics. It is possible to develop the whole theory of distortion systems with this more general context in mind, similarly to \cite{Iovino95stablebanach}. Rather than a single collection of formulas we would need a directed family of collections of formulas. In the absence of motivating examples we opted to develop this simpler framework.

Clearly, for the empty signature, $\rho_{\mathrm{fGHK}}$ is uniformly equivalent to $\rho_{\mathrm{GH}}$. This is true of $\rho_K$ in the theory of (unit balls of) Banach spaces as well, justifying the name.

\begin{prop}
Let $T$ be the theory of unit balls of Banach spaces. $\rho_{\mathrm{fGHK}}$ is uniformly equivalent to $d_K$, the Kadets distance.
\end{prop}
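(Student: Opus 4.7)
The plan is to verify both directions of uniform domination. Set $\Delta_K \coloneqq \{\|\sum_i \lambda_i x_i\| : \sum_i |\lambda_i| \leq 1\}$, so that by the earlier example list $d_K = \rho_{\Delta_K}$. In the theory of unit balls of Banach spaces, the quantifier-free type of a finite tuple is determined by the norms of its linear combinations, so $\Delta_K$ is atomically complete; hence by Propositions \ref{prop:atom-comp} and \ref{prop:enlarge}, $\ol{\Delta_K}$ is a distortion system and $d_K = \rho_{\ol{\Delta_K}}$.

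For the direction in which $d_K$ uniformly dominates $\rho_{\mathrm{fGHK}}$, I observe that by homogeneity of the norm, each scaled atomic formula $\frac{1}{2^i r_i}P_i \in \mathrm{fGHK}_0$ with $P_i=\|\tau_i\|$ equals $\|\tau_i/(2^i r_i)\|$, which is itself a Kadets formula (the coefficient $\ell^1$-norm being at most $2/(2^i r_i)\leq 1$, since atomic formulas in the usual signature are norms of linear combinations with coefficient $\ell^1$-norm at most $2$). Hence $\mathrm{fGHK}_0 \subseteq \Delta_K$, and closing under the distortion-system operations gives $\mathrm{fGHK}\subseteq \ol{\Delta_K}$, yielding the pointwise inequality $\rho_{\mathrm{fGHK}}\leq d_K$. (This is also consistent with $\mathrm{fGHK}$ being the coarsest distortion system by Proposition \ref{prop:fine-coarse}(ii).)

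For the converse direction, that $\rho_{\mathrm{fGHK}}$ uniformly dominates $d_K$, I would argue by a compactness/ultraproduct argument. Suppose $\rho_{\mathrm{fGHK}}(\frk{M}_n,\frk{N}_n)\to 0$, witnessed by correlations $R_n$. Enriching the language with distance predicates for each $R_n$ and forming the ultraproduct $(\frk{M}^\mathcal{U},\frk{N}^\mathcal{U},R^\mathcal{U})$ along a non-principal ultrafilter on $\omega$, one finds by $\aleph_1$-saturation and \L o\'s that $R^\mathcal{U}$ is a closed correlation with $\mathrm{dis}_{\mathrm{fGHK}}(R^\mathcal{U})=0$. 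Logical completeness of $\mathrm{fGHK}$ then forces $R^\mathcal{U}$ to be the graph of a Banach-space isomorphism, so $\mathrm{dis}_\psi(R^\mathcal{U})=0$ for every Kadets formula $\psi$; applying \L o\'s in reverse, for every finite $\Sigma_0\subseteq \Delta_K$ and every $\delta>0$, $\mathcal{U}$-many $n$ satisfy $\mathrm{dis}_{\Sigma_0}(R_n)<\delta$. Since $\ol{\Delta_K}$ is u.u.c.\ (the Kadets formulas being $1$-Lipschitz in the $d$-metric) and hence regular by Proposition \ref{prop:nice-regular}(i), a saturation argument converts this finitary-$\Sigma_0$ control into full Kadets control, contradicting $d_K(\frk{M}_n,\frk{N}_n)\geq\e$.

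The main obstacle is precisely this last transfer step. The underlying difficulty is that $\mathrm{dis}_{\mathrm{fGHK}}(R_n)\to 0$ controls each individual atomic $P_i$ only with bound $2^i r_i\mathrm{dis}_{\mathrm{fGHK}}(R_n)$, a bound growing in $i$, so no single $R_n$ simultaneously controls all Kadets formulas. The compactness argument must therefore build, for each $n$ and each desired precision, a possibly new correlation tailored to the relevant finite subfamily of $\Delta_K$, and then exploit regularity of $\ol{\Delta_K}$ to promote ultraproduct-level almost-correlations down to correlations between the original $\frk{M}_n$ and $\frk{N}_n$ with small $\mathrm{dis}_{\Delta_K}$.
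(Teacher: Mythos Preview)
Your easy direction is fine (and the paper simply invokes the minimality of $\mathrm{fGHK}$ under uniform domination for this), but the hard direction has a genuine gap that you yourself identify without resolving. The ultraproduct argument shows that $R^{\mathcal{U}}$ is an isomorphism, hence $\mathrm{dis}_\psi(R^{\mathcal{U}})=0$ for each fixed Kadets formula $\psi$, and \L o\'s then gives $\lim_{\mathcal{U}}\mathrm{dis}_\psi(R_n)=0$ for each $\psi$ separately. But the hypothesis $d_K(\frk{M}_n,\frk{N}_n)\geq\e$ says that \emph{every} correlation between $\frk{M}_n$ and $\frk{N}_n$ has $\Delta_K$-distortion $\geq\e$; this is a universal statement over correlations, not first-order in the structure $(\frk{M}_n,\frk{N}_n,R_n)$, so there is nothing for \L o\'s to contradict. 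Your closing paragraph gestures at ``building new correlations'' and ``promoting ultraproduct-level almost-correlations down,'' but no mechanism is given, and regularity of $\ol{\Delta_K}$ does not by itself let you descend from the ultraproduct to the factors.

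The paper's proof is entirely different and concrete. Given $\e>0$, it chooses $\delta>0$ so that any correlation $S$ with $\mathrm{dis}_{\mathrm{fGHK}}(S)<\delta$ controls a \emph{finite} list of specific atomic formulas (norms of $x$, $x\pm y$, $\tfrac{1}{2}(x+y)-z$, and for complex spaces $e^ix-y$) to within $\tfrac{\e}{5}$. It then replaces $S$ by its $\tfrac{\e}{5}$-thickening $R$, and checks that $R$ contains $(\mathbf{0},\mathbf{0})$, is closed under midpoints $(a,c),(b,d)\mapsto(\tfrac{a+b}{2},\tfrac{c+d}{2})$, negation, and (in the complex case) rotation by $e^i$, while distorting the metric by at most $\e$. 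Iterating midpoints yields closure under all dyadic-rational convex combinations, and combining with negation/rotation gives a dense set of coefficient vectors in the $\ell^1$-unit sphere; hence $R$ distorts every Kadets formula by at most $\e$, so $d_K(\frk{M},\frk{N})\leq\e$. The key idea you are missing is this hands-on exploitation of the algebraic structure of Banach spaces to manufacture a single correlation that is automatically closed under the operations generating all Kadets formulas.
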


\begin{proof}
Fix two unit ball Banach space structures $\frk{M}$ and $\frk{N}$.

\emph{Claim:} For every $\e>0$, there is a $\delta>0$ such that if $\rho_{\mathrm{fGHK}}(\frk{M},\frk{N}) < \delta$, then there exists a correlation $R\in \mathrm{cor}(\frk{M},\frk{N})$ such that:
\begin{itemize}
    \item $(\mathbf{0}^\frk{M},\mathbf{0}^\frk{N})\in R$
    \item If $(a,c),(b,d)\in R$, then $(\frac{1}{2}(a+b),\frac{1}{2}(c+d)) \in R$.
    \item If $(a,b)\in R$ then $(-a,-b)\in R$.
    \item (For complex Banach spaces) If $(a,b)\in R$ then $(e^ia,e^ib)\in R$.
    \item If $(a,c),(b,d)\in R$, then $|\lVert a - b \rVert_\frk{M} - \lVert c-d \rVert_\frk{N}| \leq \e$.
\end{itemize}

\emph{Proof of claim:} Fix $\e > 0$. There is a $\delta > 0$ such that for any $S\in \mathrm{cor}(\frk{M},\frk{N})$ with $\mathrm{dis}_{fGHK}(S) \leq \delta$, then:

\begin{itemize}
    \item For every $(a,b)\in S$, $|\lVert a \rVert_\frk{M} - \lVert b \rVert_\frk{N}| \leq \frac{1}{5}\e$.
    \item For every $(a,c),(b,d)\in S$, $|\lVert a - b \rVert_\frk{M} - \lVert c-d \rVert_\frk{N}| \leq \frac{1}{5}\e$, $|\lVert a + b \rVert_\frk{M} - \lVert c+d \rVert_\frk{N}| \leq \frac{1}{5}\e$, and (if the Banach spaces are complex) $|\lVert e^i a - b \rVert_\frk{M} - \lVert e^i c-d \rVert_\frk{N}| \leq \frac{1}{5}\e$.
    \item For every $(a,d),(b,e),(c,f)\in S$, $|\lVert \frac{1}{2}(a+b) - c \rVert_\frk{M} - \lVert \frac{1}{2}(d+e)-f \rVert_\frk{N}| \leq \frac{1}{5}\e$.
\end{itemize}

Such a $\delta$ exists because this is a finite list of atomic formulas. Consider the correlation $$R=\left\{(a,b):(\exists (c,d) \in S)\lVert a-c\rVert_\frk{M}\leq \frac{1}{5}\e\wedge\lVert b-d\rVert_\frk{N} \leq \frac{1}{5}\e\right\}.$$ 
Now we have what we want:

\begin{itemize}
    \item $(\mathbf{0}^\frk{M},\mathbf{0}^\frk{N})$ is in $R$ because if $(a,\mathbf{0}^\frk{N})\in S$, then the distance between $a$ and $\mathbf{0}^\frk{M}$ is $\leq \frac{1}{5}\e$.
    \item If $(a,c),(b,d),(\frac{1}{2}(a+b),e)\in S$, then the distance between $e$ and $\frac{1}{2}(c+d)$ is $\leq \frac{1}{5}\e$.
    \item If $(a,b),(-a,c)\in S$, then the distance between $c$ and $-b$ is $\leq \frac{1}{5}$.
    \item (For a complex Banach space) If $(a,b),(e^i a, c)\in S$, then the distance between $c$ and $e^i b$ is $\leq \frac{1}{5}\e$.
    \item If $(a,c),(b,d)\in R$, then there are $(a^\prime,c^\prime),(b^\prime,d^\prime) \in S$ each distance $\frac{1}{5}$ to the corresponding element. By several applications of the triangle inequality this implies that $|\lVert a - b \rVert_\frk{M} - \lVert c-d \rVert_\frk{N}| \leq \frac{4}{5}\e < \e$.\hfill $\qed_{\text{claim}}$
\end{itemize}
%\emph{End of proof of claim.}

By iterating the second bullet point in the claim we get the following: For any $n<\omega$, and any $(a_1,b_1),\dots,(a_{2^n},b_{2^n}) \in R$, $\left(2^{-n}\sum_i a_i, 2^{-n}\sum_i b_i\right) \in R$. By using duplicates  we get that if $\lambda_1,\dots,\lambda_n$ are a sequence of positive dyadic rationals with $\sum_i \lambda_i = 1$, then for any $(a_1,b_1),\dots,(a_{m},b_{m}) \in R$, $\left(\sum_i \lambda_i a_i, \sum_i \lambda_i b_i\right) \in R$. 

Using the third and fourth bullet points we get that if $\lambda_1,\dots,\lambda_n$ are a sequence of numbers of the form $ad$ with $a = \pm e^{ik}$ with $k<\omega$ and $d$ a positive dyadic rational $\leq 1$, if $\sum_i |\lambda_i| = 1$, then for any $(a_1,b_1),\dots,(a_{m},b_{m}) \in R$, $\left(\sum_i \lambda_i a_i, \sum_i \lambda_i b_i \right) \in R$, so in particular $$\left|\left\lVert \sum_i \lambda_i a_i \right\rVert_\frk{M} - \left\lVert \sum_i \lambda_i b_i \right\rVert_\frk{N}\right| \leq \e. $$
Since $\lambda_i$ of this form are dense in the set of all coefficients $\gamma_i$ satisfying $\sum_i |\gamma_i|= 1$ (for complex Banach spaces this relies on the fact that $e^i$ is an irrational rotation) and by Fact 3.4 in \cite{10.2307/23809583}, this implies that $d_K(\frk{M},\frk{N})\leq \e$.

The other direction follows from the minimality of $\rho_{\mathrm{fGHK}}$ under uniform domination.
\end{proof}

\subsection{Banach-Mazur Distance}

Difficulty arises with the Banach-Mazur distance in that the witnessing correlations are bijections between the entire Banach spaces in question. To deal with this we will use Ben Yaacov's emboundment concept \cite{Yaacov2008-ITACFO} to encode the entire Banach space as a bounded structure.

We could in principle do this more cleanly using the full logic for unbounded structures in \cite{Yaacov2008-ITACFO}, but then we would have to re-develop the machinery of distortion systems in that broader context. We should note that Ben Yaacov does develop a theory of perturbations for unbounded metric structures in \cite{Yaacov2008-ITACFO}.

\begin{defn}
An \emph{embounded Banach space structure} is a metric structure $\{\frk{M},\allowbreak d,\allowbreak \mathbf{0},\allowbreak \infty,\allowbreak P,\allowbreak S_r\}_{r\in K}$, where $\frk{M}$ is a Banach space over the field $K\in\{\mathbb{R},\mathbb{C}\}$ together with an additional point $\infty$.

Let $\theta(x) = \frac{x}{1+x}$. The metric is $d^\frk{M}(x,y)=\frac{\theta(\lVert x - y \rVert)}{1+\lVert x \rVert \downarrow \lVert y \rVert}$ for $x,y\neq \infty$ and $d(x,\infty)=\frac{1}{1+\lVert x \rVert}$. $P(x,y,z)=\frac{\theta(\lVert x + y - z \rVert)}{1+\lVert x \rVert \uparrow \lVert y \rVert \uparrow \lVert z \rVert}$ if $x,y,z\neq \infty$, and $P(x,y,z)=0$ if any of $x,y,z$ are $\infty$. $S_r(x,y) = \frac{\theta(\lVert rx -  y\rVert)}{1+\lVert x \rVert \uparrow \lVert y \rVert}$ if $x,y\neq \infty$, and $S_r(x,y)=0$ if either $x$ or $y$ are $\infty$.
\end{defn}

Note that even though the language as stated is uncountable it is actually interdefinable with a finite sub-language\footnote{For $K=\mathbb{R}$, $S_{\frac{1}{2}}$ is sufficient, and for $K=\mathbb{C}$ we also need $S_i$.} in unit ball Banach space structures. 

In order to describe the formulas that capture the Banach-Mazur distance we will freely use the following facts:

\begin{fact} There is a theory whose models are precisely emboundments of Banach spaces. Let $T$ be that theory.
\begin{itemize}
    \item For any $r > 0$ there is a formula that is the distance predicate of the ball of (norm) radius $r$, $B_r$, in any model of $T$.
    \item For any $r > 0$ there is a formula that defines $\lVert x \rVert$ in $B_r$ in any model of $T$.
    \item For any $r>0$ there is a formula that defines the function $+:B_r^2\rightarrow B_{2r}$ in any model of $T$.
    \item For any $s\in K$ and any $r>0$ there is a formula that defines the function $(x\mapsto sx):B_{r}\rightarrow B_{|s|r}$ in any model of $T$.
\end{itemize}
\end{fact}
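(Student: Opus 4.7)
The plan is to first axiomatize $T$, then exploit the fact that on any emboundment the norm $\|x\|$ is a continuous reparametrization of $d(x,\infty)$, which makes all four items tractable.

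For the axiomatization, take the metric axioms together with: $\sup$-axioms pinning down $\mathbf{0}$ and $\infty$ (via $d(\mathbf{0},\infty)=1$ and $\sup_x d(x,\infty) \leq 1$); $\sup$-axioms forcing $P$ and $S_r$ to vanish whenever any argument equals $\infty$; $\sup$-axioms forcing the identities defining $d$, $P$, and $S_r$ in terms of the norm formula $\|x\| := (1-d(x,\infty))/d(x,\infty)$ to hold on sets bounded away from $\infty$ (axiomatized ball by ball, using penalty terms to restrict to the correct domain); and $\sup$/$\inf$-axioms encoding the algebraic Banach space axioms---commutativity via $\sup_{x,y,z}|P(x,y,z)-P(y,x,z)| \leq 0$, associativity, distributivity, $S_1(x,x)=0$, compatibility of $S_s\circ S_t$ with $S_{st}$, and closure under addition/scalar multiplication via $\inf_z P(x,y,z) \leq 0$ and $\inf_y S_r(x,y) \leq 0$ (restricted to non-$\infty$ arguments). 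Any model of these axioms decomposes as an emboundment: there is a unique point at metric distance $0$ from $\infty$, removing it yields a Banach space with $\|x\|$ given by the formula above and operations recovered as the unique witnesses of $P=0$ and $S_r=0$; completeness transfers from the structure's metric completeness on each bounded ball.

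For the first two items, use that $\|x\| \leq r$ iff $d(x,\infty) \geq 1/(1+r)$. A case-by-case computation of $\inf_{y \in B_r} d(x,y)$ (attained at $y=rx/\|x\|$ when $\|x\|>r$, and at any $y$ of norm $r$ when $x=\infty$) yields $d(x,B_r) = F(d(x,\infty))$ where $F:[0,1]\to[0,1/(1+r)]$ is the continuous function equal to $\frac{1-(1+r)t}{(1+r)(1-rt)}$ on $[0,1/(1+r)]$ and zero on $[1/(1+r),1]$; note $F(0)=1/(1+r)=d(\infty,B_r)$. Thus $B_r$ is a definable set. For the norm on $B_r$, $\|x\|=(1-d(x,\infty))/d(x,\infty)$ is continuous in $d(x,\infty) \in [1/(1+r),1]$; extending by the constant $r$ on $[0,1/(1+r)]$ gives a connective $G:[0,1]\to[0,r]$ with $G(d(x,\infty))=\|x\|$ for $x\in B_r$.

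The main obstacle is the last two items, which require recovering the graphs of $+$ and scalar multiplication from $P$ and $S_r$. On $B_r^2 \times B_{2r}$ the denominator $1+\|x\|\uparrow\|y\|\uparrow\|z\|$ lies in $[1,1+2r]$ and is a formula by the previous step, so $\theta(\|x+y-z\|)=(1+\|x\|\uparrow\|y\|\uparrow\|z\|)P(x,y,z)$ takes values in $[0,4r/(1+4r)] \subset [0,1)$, the interval where $\theta^{-1}$ is continuous; this yields $\|x+y-z\|$ as a formula on $B_r^2 \times B_{2r}$. Then $\|x+y\|=\inf_{z\in B_{2r}}[\|z\|+2\|x+y-z\|]$, with the infimum attained at $z=x+y\in B_{2r}$ (any other $z$ gives at least $\|x+y\|$ by the triangle inequality), is also a formula, and substituting into $d(z,x+y)=\theta(\|x+y-z\|)/(1+\|z\|\downarrow\|x+y\|)$ gives the distance predicate of the graph of $+$ inside $B_r^2 \times B_{2r}$, establishing that $+$ is a definable function. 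Scalar multiplication is identical, with $S_s$ replacing $P$.
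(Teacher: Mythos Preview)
The paper does not give a proof of this statement: it is labeled as a Fact and is implicitly attributed to Ben Yaacov's emboundment formalism \cite{Yaacov2008-ITACFO}. So there is no paper-side argument to compare against; you have supplied what the paper deliberately omits.

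Your argument is essentially correct. The key observation---that $\lVert x\rVert$ is recoverable as a continuous function of $d(x,\infty)$ on any set bounded away from $\infty$---is exactly the right leverage, and your computation of $d(x,B_r)$ as an explicit continuous function of $d(x,\infty)$ is accurate. The recovery of $\lVert x+y-z\rVert$ from $P$ on $B_r^2\times B_{2r}$ by clearing the denominator and inverting $\theta$ on the compact range $[0,4r/(1+4r)]$ is clean and correct, and from there definability of $+$ follows.

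Two small points worth tightening. First, in the axiomatization, the closure axiom $\inf_z P(x,y,z)\leq 0$ is vacuous as stated since $z=\infty$ always witnesses it; you note ``restricted to non-$\infty$ arguments,'' and concretely this means the axiom should be phrased ball-by-ball, e.g.\ $\sup_{x,y}\inf_z\bigl[P(x,y,z)\dotdiv C\cdot(\tfrac{1}{1+r}\dotdiv d(x,\infty))\dotdiv\cdots\bigr]$ with $z$ similarly bounded away from $\infty$. Second, the phrase ``distance predicate of the graph of $+$'' is slightly imprecise: what you have actually exhibited is the formula $d^{\frk M}(z,x+y)$ on $B_r^2\times B_{2r}$, which is exactly the standard witness that $+$ is a definable function (and is in fact stronger than the distance-to-graph predicate in the product). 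Neither point affects the validity of the argument.
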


Also note that inclusion maps between definable sets are always uniformly definable.

This lemma follows immediately from the previous set of facts, although a careful proof would be slightly involved.

\begin{lem} \label{lem:BM-fmlas}
Let $T$ be the theory of emboundments of Banach space structures. There are formulas that define the following quantities in any model of $T$ for any real $r > 0$, and $t\in K$,
\begin{itemize}
    \item $\varphi_{r}(x,y,z) = [r - r^{-2}\log(\lVert x \rVert \uparrow \lVert y \rVert \uparrow \lVert z \rVert)]_0^1 \cdot [(2 - r^{-1})\log \lVert x+y-z \rVert]_{-r}^r$,
    \item $\psi_{r,s}(x,y)=[r - r^{-2}\log(\lVert x \rVert \uparrow \lVert y \rVert)]_0^1 \cdot [(2 - r^{-1})\log \lVert sx-y \rVert]_{-r}^r$,
\end{itemize}
where these quantities are understood to be $0$ if any of their inputs are $\infty$.
\end{lem}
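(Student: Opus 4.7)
The plan is to exhibit each expression as a continuous combination of the formulas given by the preceding facts, confined to a bounded ball large enough to absorb the nontrivial support of the first factor, and then extended by $0$ to the rest of the structure (including tuples involving $\infty$). Crucially, the extended definition of formula adopted in this paper allows arbitrary continuous $\mathbb{R}^\omega \to \mathbb{R}$ connectives, so the only real work is a bounded-ball construction plus a continuity check at the boundary.

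Fix $r > 0$ and set $R = e^{r^3}$. The first factor
$A(x,y,z) := [r - r^{-2}\log(\lVert x \rVert \uparrow \lVert y \rVert \uparrow \lVert z \rVert)]_0^1$
is identically $0$ once $\lVert x \rVert \uparrow \lVert y \rVert \uparrow \lVert z \rVert \geq R$, while the second factor
$B(x,y,z) := [(2 - r^{-1})\log \lVert x+y-z \rVert]_{-r}^r$
has magnitude at most $r$. By the listed facts, on $B_R$ the norm $\lVert \cdot \rVert$ is a formula, so $A$ restricted to $B_R^3$ is obtained by composing the three norms with $\uparrow$ and with the bounded continuous function $t \mapsto [r - r^{-2}\log t]_0^1$ on $[0,\infty)$ (continuous at $t = 0$ with value $1$, since the upper clamp kills the $\log$ singularity). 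Similarly, the definable maps $+ : B_R^2 \to B_{2R}$ and $x \mapsto -x : B_R \to B_R$ combine into a definable map $B_R^3 \to B_{3R}$ computing $x + y - z$, whose norm on $B_{3R}$ is a formula; post-composing with $t \mapsto [(2 - r^{-1})\log t]_{-r}^r$ (again bounded and continuous on $[0,\infty)$ thanks to the clamps) produces $B$ as a formula on $B_R^3$. Hence $A \cdot B$ is a formula on $B_R^3$.

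To globalize, I would multiply $A \cdot B$ by a cutoff formula that equals $1$ on $B_R^3$ and vanishes outside $B_{R'}^3$ for some $R' > R$, readily built from the distance predicate of $B_{R'}$ provided by the facts. Since $A$ already vanishes on $B_{R'}^3 \setminus B_R^3$, the product agrees with the intended $\varphi_r$ on $B_{R'}^3$; outside $B_{R'}^3$ we extend by $0$, and continuity at tuples involving $\infty$ is automatic since the embounded metric forces some coordinate norm to $+\infty$ there, driving $A$ to $0$ while $|B| \leq r$. The argument for $\psi_{r,s}$ is formally identical, using the definable scalar-multiplication map $B_R \to B_{|s|R}$ in place of negation to realize $(x,y) \mapsto sx - y$ as a definable map $B_R^2 \to B_{(|s|+1)R}$. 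The only mildly subtle point anywhere is the behavior of $\log$ at $0$ and at $\infty$, which is entirely absorbed by the explicit clamps $[\cdot]_0^1$ and $[\cdot]_{-r}^r$ built into the definitions.
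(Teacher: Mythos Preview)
The paper itself gives no proof of this lemma, only the remark that it ``follows immediately from the previous set of facts, although a careful proof would be slightly involved.'' Your sketch is exactly the sort of careful proof the author is alluding to, and the overall strategy---build the expression on a sufficiently large ball using the definable norm, addition, and scalar-multiplication maps, then damp to zero outside via a cutoff built from a distance predicate---is correct.

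There is one small wrinkle in your globalization step. You construct $A\cdot B$ using formulas valid on $B_R^3$, then multiply by a cutoff supported in $B_{R'}^3$, and argue that ``since $A$ already vanishes on $B_{R'}^3\setminus B_R^3$, the product agrees with the intended $\varphi_r$ on $B_{R'}^3$.'' But it is the \emph{intended} $A$ that vanishes there; the formula you built is only guaranteed to compute $A$ correctly on $B_R^3$, and may take arbitrary values on $B_{R'}^3\setminus B_R^3$, so the cutoff (which is merely between $0$ and $1$ on that annulus) does not necessarily kill it. The fix is trivial: build the inner formula using the $B_{R'}$-versions of the norm and operation formulas rather than the $B_R$-versions, so that it genuinely computes $A\cdot B$ on all of $B_{R'}^3$; then the product vanishes on $B_{R'}^3\setminus B_R^3$ for the right reason, and the cutoff handles the rest of the structure, including tuples involving $\infty$.
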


To clarify what we're doing, intuitively we're after  expressions of the form $2\log\lVert \dots\rVert$, with $\dots$ replaced with various linear combinations, to capture the Banach-Mazur distance. These are unbounded so we need to use bounded approximations. Unlike with the Lipschitz metric, we can't just use $[2\log \lVert \dots \rVert]_{-r}^r$ as they aren't by themselves continuous on the emboundment (specifically the problem is at $\infty$). Given this, we need the more complicated expressions of the form $[\dots]_0^1$ as cutoff functions which are $1$ whenever the maximum norm of the inputs is less than $e^{r^3 - r^2}$ and which are $0$ whenever it is greater than $e^{r^3}$. The specific form of these cutoffs and the coefficient $2-r^{-1}$ are chosen so that the $(\Leftarrow)$ direction of the next result, Proposition \ref{prop:embound-cor}, will work. 

\begin{defn}
Let $\mathrm{BM}_0$ be the formulas in Lemma $\ref{lem:BM-fmlas}$ allowing substitution of the constant $\mathbf{0}$. Let $\mathrm{BM}=\ol{\mathrm{BM}_0}$.
\end{defn}

To see that $\mathrm{BM}_0$ is atomically complete, note that by choosing large enough values for $r$ (so that the cutoff function is $1$) and appropriate values of $s$ and $t$, the formulas in $\mathrm{BM}_0$ clearly fix the values of $d(x,y)$, $P(x,y,z)$, $S_r(x,y)$ for any $x,y,z\in\{a,b,c,\mathbf{0}\}$ with $a,b,c$ any triple of elements of a structure. The only unclear thing is determining the value of $d(a,\infty)$, but this $1-d(a,\mathbf{0})$, so $\mathrm{BM}_0$ is atomically complete. Therefore $\mathrm{BM}$ is a distortion system.

\begin{prop} \label{prop:embound-cor}
Let $\frk{M}$ and $\frk{N}$ be emboundments of the Banach spaces $X$ and $Y$, respectively. For $R \in \mathrm{cor}(\frk{M},\frk{N})$ a closed correlation, $\mathrm{dis}_{\mathrm{BM}}(R) \leq \e < \infty$ if and only if $R$ is the graph of a linear bijection between $X$ and $Y$ (together with the tuple $(\infty^\frk{M},\infty^\frk{N})$) such that $\lVert R \rVert \leq \sqrt{e^\e}$ and $\lVert R^{-1} \rVert \leq \sqrt{e^\e}$.
\end{prop}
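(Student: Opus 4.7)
The plan is to prove both directions by analyzing the product structure of the generators $\varphi_r,\psi_{r,s}\in\mathrm{BM}_0$: each is of the form $c\cdot A$, where $c=[r-r^{-2}\log\max(\lVert\cdot\rVert)]_0^1\in[0,1]$ is a ``cutoff'' factor equal to $1$ whenever the relevant norms are at most $e^{r^3-r^2}$, and $A=[(2-r^{-1})\log\lVert\cdot\rVert]_{-r}^r\in[-r,r]$ is a truncated scaled log of a specific linear combination ($sx-y$ or $x+y-z$ in $\frk{M}$). By Proposition~\ref{prop:enlarge} it suffices to verify the distortion bound on these generators.

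For $(\Leftarrow)$, I would use that $T$ linear with $\lVert T\rVert,\lVert T^{-1}\rVert\leq e^{\e/2}$ gives $\lVert T(sa-b)\rVert\in[e^{-\e/2},e^{\e/2}]\lVert sa-b\rVert$, whence (since $[\cdot]_{-r}^r$ is $1$-Lipschitz) $|A_\frk{M}-A_\frk{N}|\leq(2-r^{-1})\cdot\e/2$; the same log-comparison on $\lVert a\rVert\uparrow\lVert b\rVert$ gives $|c_\frk{M}-c_\frk{N}|\leq r^{-2}\cdot\e/2$. Splitting $|c_\frk{M}A_\frk{M}-c_\frk{N}A_\frk{N}|\leq c_\frk{M}|A_\frk{M}-A_\frk{N}|+|A_\frk{N}||c_\frk{M}-c_\frk{N}|$ and using $c_\frk{M}\leq 1$, $|A_\frk{N}|\leq r$, yields exactly $\e$: the coefficients $2-r^{-1}$ and $r^{-2}$ in the generators are tuned precisely so the two contributions add to $\e$. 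Pairs involving $\infty$ are immediate since $T$ sends $\infty$ to $\infty$ and the formulas then vanish on both sides.

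For $(\Rightarrow)$, I would establish the conclusion in four stages, each exploiting that at a carefully chosen pair $(\bar m,\bar n)\in R$ the $\frk{M}$-value is $-r$ or $0$, so the inequality $|\varphi^\frk{M}-\varphi^\frk{N}|\leq\e$ valid for \emph{every} large $r$ pins something down sharply. Stage one (behavior at $\infty$ and $\mathbf{0}$): $\psi_{r,1}^\frk{M}(\infty,\infty)=0$ forces any $b$ with $(\infty^\frk{M},b)\in R$ to satisfy $\log\lVert b\rVert\geq r^3-\e r$ for all large $r$, hence $b=\infty^\frk{N}$; the analogous use of $\psi_{r,2}(\mathbf{0},\mathbf{0})=-r$ (chosen because its $\frk{N}$-analogue $\psi_{r,2}(b,b)$ carries nontrivial information about $\lVert b\rVert$, unlike $\psi_{r,1}$) forces $\mathbf{0}^\frk{M}\leftrightarrow\mathbf{0}^\frk{N}$. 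Stage two (norm comparison): $\psi_{r,1}(\mathbf{0},x)$ at $(x,y)\in R$ (using stage one for the $\mathbf{0}$-coordinate and taking $r$ large so both cutoffs are $1$) yields $\lVert y\rVert/\lVert x\rVert\in[e^{-\e/2},e^{\e/2}]$; in particular $y\neq\infty^\frk{N}$ when $x\neq\infty^\frk{M}$. Stage three (functionality): if $(x,y_1),(x,y_2)\in R$, stage two bounds $\lVert y_i\rVert$, so $\psi_{r,1}(x,x)$ has cutoff $1$ on both sides, and the resulting bound $(2-r^{-1})\log\lVert y_1-y_2\rVert\leq -r+\e$ for all large $r$ forces $y_1=y_2$, so $R$ is the graph of a function $T$. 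Stage four (linearity): $\psi_{r,s}(x,sx)$ at $(Tx,T(sx))$ forces $\lVert sTx-T(sx)\rVert=0$ by the same mechanism, and $\varphi_r(x,y,x+y)$ at $(Tx,Ty,T(x+y))$ forces $T(x+y)=Tx+Ty$. The norm bounds $\lVert T\rVert,\lVert T^{-1}\rVert\leq\sqrt{e^\e}$ come from stage two; surjectivity from $R$ being a correlation; injectivity from linearity with $\lVert T^{-1}\rVert<\infty$.

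The main obstacle I anticipate is the careful bookkeeping with the two nested cutoff layers: in each stage of $(\Rightarrow)$ one has to verify that the $\frk{N}$-side cutoffs $c$ are forced to $1$ for large $r$ using norm bounds already established in earlier stages, so that the remaining inequality in $A$ can be driven to equality as $r\to\infty$. The design of the formulas in Lemma~\ref{lem:BM-fmlas}---specifically the choices of $r^3$, $r^{-2}$, and $2-r^{-1}$---is exactly what makes this cascade of stages (and the matching tight calculation in $(\Leftarrow)$) work.
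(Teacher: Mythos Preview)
Your proposal is correct and follows essentially the same approach as the paper. The $(\Leftarrow)$ direction is identical: both split $|c_\frk{M}A_\frk{M}-c_\frk{N}A_\frk{N}|$ as a product difference, bound the cutoff term by $r^{-2}\cdot\e/2$ and the log term by $(2-r^{-1})\cdot\e/2$, and observe that these sum to exactly~$\e$.

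For $(\Rightarrow)$ the ingredients are the same but the organization differs slightly. The paper handles $\infty$ via the unary formula $\psi_{r,s}(x,\mathbf{0})$ (varying $s$), then establishes additivity and homogeneity directly (from which functionality and the $\mathbf{0}$-correspondence fall out), and derives the norm bound last from $\varphi_r(x,\mathbf{0},\mathbf{0})$. You instead handle $\infty$ via the diagonal $\psi_{r,1}(x,x)$, pin down $\mathbf{0}$ separately with $\psi_{r,2}$, establish the norm comparison \emph{before} linearity, and then prove functionality explicitly (using the norm bounds to control the cutoffs) before checking additivity and homogeneity. Your ordering is arguably a bit more transparent, since the paper leaves the deduction of functionality from the additive/homogeneous relation implicit, whereas you make it a stand-alone step.
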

\begin{proof}
$(\Rightarrow):$ Assume that $R$ is a closed correlation between $\frk{M}$ and $\frk{N}$ with $\mathrm{dis}_{\mathrm{BM}}(R) \allowbreak \leq \e < \infty$. 

Pick $m \in \frk{M}$ and assume that $(m,\infty^\frk{N})\in R$. Consider the formula $\psi_{r,s}(x,\mathbf{0})$. Since $\psi_{r,s}^\frk{N}(\infty^\frk{N},\mathbf{0}^\frk{N})=0$ for any $r,s$, we have that $|\psi_{r,s}^\frk{M}(m,\mathbf{0}^\frk{M})|\leq \e$ for any $r,s$. Assume that $a \neq \infty^\frk{M}$. When $r$ is large enough (relative to the choice of $s$) we have that $\psi_{r,s}^\frk{M}(m,\mathbf{0}^\frk{M}) = [(2-r^{-1})\log\lVert sm \rVert]_{-r}^{r}$, but this quantity is unbounded in $r$ and $s$ (even if $a=\mathbf{0}^\frk{M}$), so we must have that $a=\infty^\frk{M}$.

By symmetry $(\infty^\frk{M},\infty^\frk{N})$ is the only instance of a pair containing either copy of $\infty$.

Pick $a,b\in X$ and consider $d,e,f \in Y$ such that $(a,d),(b,e),(a+b,f)\in R$. For any sufficiently large $r$, we have that $\varphi_{r}^\frk{M}(a,b,a+b) = -r$. Assume that $f \neq d+e$. Then for any sufficiently large $r$, we have that $$\varphi_{r}^\frk{N}(d,e,f) = (2-r^{-1})\log \lVert b + e - f \rVert > \log \lVert b + e - f \rVert > -\infty.$$ Since we can choose $r$ arbitrarily large, this contradicts that $\mathrm{dis}_{\mathrm{BM}}(R)\leq \e$. Therefore $f=d+e$.

The same argument shows that if $(a,b),(ua,c)\in R$, then $b=ua$. In particular this implies that $(\mathbf{0}^\frk{M},\mathbf{0}^\frk{N})$ is the only correlation involving a copy of $\mathbf{0}$.

Therefore, by symmetry, $R\upharpoonright X\times Y$ is the graph of a linear bijection.

Now consider $a\in X\setminus\{\mathbf{0}\}$ and $b \in Y\setminus \{\mathbf{0}\}$ such that $(a,b)\in R$. Considering the formula $\varphi_{r}(x,\mathbf{0},\mathbf{0})$ for sufficiently large $r$, we have that $$(2-r^{-1})|\log\lVert a \rVert-\log \lVert b \rVert| = (2-r^{-1})\left|\log\frac{\lVert a \rVert}{\lVert b \rVert}\right| \leq \e.$$

Since we can do this for arbitrarily large $r$, this yields

$$2\left|\log\frac{\lVert a \rVert}{\lVert b \rVert}\right| \leq \e.$$

So we have that $\lVert R \rVert \leq e^{\e/2} = \sqrt{e^\e}$ and by symmetry $\lVert R^{-1} \rVert \leq \sqrt{e^\e}$.

$(\Leftarrow):$ Let $A$ be a linear bijection between $X$ and $Y$ such that $\lVert R \rVert, \lVert R^{-1} \rVert \leq \sqrt{e^{\e}}$. Let $R = A \cup \{\infty^\frk{M},\infty^\frk{N}\}$. We need to compute $\mathrm{dis}_{BM}(R) = \mathrm{dis}_{BM_0}(R)$. Since we know that $(\mathbf{0}^\frk{M},\mathbf{0}^\frk{N}) \in R$, we only need to check the formulas in Lemma $\ref{lem:BM-fmlas}$.

Let $(a,e),(b,f),(c,g)\in R$ and consider the quantity $|\varphi_{r}^\frk{M}(a,b,c)-\varphi_{r}^\frk{N}(e,f,g)|$. If any of $a,b,c,e,f,g$ are $\infty$ then this is $0$, so assume that none of them are. To estimate this we will need the following facts:
\begin{align*}
|x_1 y_1 - x_0 y_0| &\leq |x_1 - x_0|(|y_0|\uparrow|y_1|)+|y_1 - y_0|(|x_0|\uparrow|x_1|), \\
|[x]_a^b - [y]_a^b| &\leq |x-y|.
\end{align*}
Applying these to this case gives
\begin{align*}
|\varphi_{r}^\frk{M}(a,b,c)-\varphi_{r}^\frk{N}(e,f,g)| &\leq r^{-2}\left|\log\frac{\lVert a \rVert\uparrow \lVert b\rVert \uparrow \lVert c\rVert}{\lVert e \rVert\uparrow \lVert f\rVert \uparrow \lVert g\rVert}\right|r \\
&+ (2-r^{-1})\left|\log\frac{\lVert a + b - c \rVert}{\lVert e + f - g \rVert}\right|,
\end{align*}
since the first term in $\varphi_r$ can have magnitude at most $1$ and the second term can have magnitude at most $r$. Now finally note that we must have
\begin{align*}
\left|\log\frac{\lVert a \rVert\uparrow \lVert b\rVert \uparrow \lVert c\rVert}{\lVert e \rVert\uparrow \lVert f\rVert \uparrow \lVert g\rVert}\right| &\leq \frac{\e}{2},\\
\left|\log\frac{\lVert a + b - c \rVert}{\lVert e + f - g \rVert}\right| &\leq \frac{\e}{2}.
\end{align*}
%because if $u = \lVert a \rVert\uparrow \lVert b\rVert \uparrow \lVert c\rVert$ and $v = \lVert e \rVert\uparrow \lVert f\rVert \uparrow \lVert g\rVert$, then  $v \leq \sqrt{e^\e}u$ and also $u \leq \sqrt{e^\e}v$. 
Putting this all together gives

$$|\varphi_{r}^\frk{M}(a,b,c)-\varphi_{r}^\frk{N}(e,f,g)| \leq r^{-1}\frac{\e}{2} + (2-r^{-1})\frac{\e}{2} = \e.$$

The same proof works for $\psi_{r,s}$, so we have that $\mathrm{dis}_{\mathrm{BM}}(R) \leq \e$.
\end{proof}

\begin{cor}
If $X$ and $Y$ are Banach spaces and $\frk{M}$ and $\frk{N}$ are their corresponding emboundments, then $\rho_{\mathrm{BM}}(\frk{M},\frk{N})=d_{\mathrm{BM}}(X,Y)$.
\end{cor}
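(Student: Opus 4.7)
The plan is to derive the corollary directly from Proposition~\ref{prop:embound-cor} together with the standard observation that the Banach--Mazur distance is invariant under rescaling. Recall $d_{\mathrm{BM}}(X,Y) = \inf\{\log(\lVert A\rVert\cdot\lVert A^{-1}\rVert) : A: X\to Y \text{ a linear isomorphism}\}$, with the convention that the infimum is $\infty$ when no such $A$ exists. The key bridge is that Proposition~\ref{prop:embound-cor} characterizes $\mathrm{dis}_{\mathrm{BM}}(R)\leq \e$ in terms of $\max(\lVert A\rVert,\lVert A^{-1}\rVert)\leq\sqrt{e^\e}$, and an elementary scaling converts between this balanced bound and the product form appearing in $d_{\mathrm{BM}}$.

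For the inequality $\rho_{\mathrm{BM}}(\frk{M},\frk{N})\leq d_{\mathrm{BM}}(X,Y)$, fix $\e > d_{\mathrm{BM}}(X,Y)$ and pick a linear isomorphism $A: X\to Y$ with $\log(\lVert A\rVert\cdot\lVert A^{-1}\rVert) < \e$. Set $c = \sqrt{\lVert A^{-1}\rVert/\lVert A\rVert}$ and $A' = cA$; a direct computation shows $\lVert A'\rVert = \lVert (A')^{-1}\rVert = \sqrt{\lVert A\rVert\cdot\lVert A^{-1}\rVert} < \sqrt{e^\e}$. Let $R$ be the graph of $A'$ together with the pair $(\infty^\frk{M},\infty^\frk{N})$; this is a closed correlation in $\mathrm{cor}(\frk{M},\frk{N})$. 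By the $(\Leftarrow)$ direction of Proposition~\ref{prop:embound-cor}, $\mathrm{dis}_{\mathrm{BM}}(R)\leq \e$, so $\rho_{\mathrm{BM}}(\frk{M},\frk{N})\leq \e$, and letting $\e\downarrow d_{\mathrm{BM}}(X,Y)$ gives the desired bound.

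Conversely, for $\rho_{\mathrm{BM}}(\frk{M},\frk{N})\geq d_{\mathrm{BM}}(X,Y)$, note that by Proposition~1.2(i) distortion is invariant under metric closure, so the infimum defining $\rho_{\mathrm{BM}}$ is unchanged if we restrict to closed correlations. If no closed correlation has finite distortion then $\rho_{\mathrm{BM}}(\frk{M},\frk{N})=\infty$ and there is nothing to prove; otherwise pick any closed $R$ with $\mathrm{dis}_{\mathrm{BM}}(R) = \e <\infty$. The $(\Rightarrow)$ direction of Proposition~\ref{prop:embound-cor} identifies $R\upharpoonright X\times Y$ with the graph of a linear bijection $A: X\to Y$ with $\lVert A\rVert,\lVert A^{-1}\rVert \leq \sqrt{e^\e}$, so $\log(\lVert A\rVert\cdot\lVert A^{-1}\rVert)\leq \e$ and hence $d_{\mathrm{BM}}(X,Y)\leq \e$. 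Taking the infimum over $R$ yields $d_{\mathrm{BM}}(X,Y)\leq \rho_{\mathrm{BM}}(\frk{M},\frk{N})$.

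There is no real obstacle here: the technical heart of the argument, namely the translation between the bounded formulas in $\mathrm{BM}_0$ and norm estimates on a linear map (with careful management of the cutoff functions and the factor $2-r^{-1}$), has already been carried out in Proposition~\ref{prop:embound-cor}. What remains is only bookkeeping plus the scaling reduction, and the only minor point to watch is that we need not separately consider non-function correlations, since those automatically carry infinite $\mathrm{BM}$-distortion and are absorbed by the infimum.
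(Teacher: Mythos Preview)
Your proof is correct and follows essentially the same route as the paper: both directions rest on Proposition~\ref{prop:embound-cor}, and for $\rho_{\mathrm{BM}}\leq d_{\mathrm{BM}}$ you use exactly the same rescaling $A\mapsto cA$ with $c=\sqrt{\lVert A^{-1}\rVert/\lVert A\rVert}$ to balance the two operator norms. The paper merely declares the inequality $d_{\mathrm{BM}}\leq\rho_{\mathrm{BM}}$ to be clear, whereas you spell it out (including the reduction to closed correlations via Proposition~1.2(i)); this is the only difference, and it is one of exposition rather than strategy.
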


\begin{proof}
Clearly we have $d_{\mathrm{BM}}(X,Y) \leq \rho_{\mathrm{BM}}(\frk{M},\frk{N})$. To get the other direction, let $A:X\rightarrow Y$ be a linear bijection with $\lVert A \rVert \cdot \lVert A^{-1}\rVert \leq e^\e$. If we set $r = \sqrt{\frac{\lVert A^{-1}\rVert}{\lVert A \rVert}}$, then we have that $rA$ is a linear bijection between $X$ and $Y$ with $\lVert rA \rVert \leq \sqrt{e^\e}$ and $\lVert (rA)^{-1} \rVert \leq \sqrt{e^\e}$, so we get $\rho_\mathrm{BM}(\frk{M},\frk{N})\leq d_\mathrm{BM}(X,Y)$.
\end{proof}

%SIMILAR THING SHOULD WORK FOR OPERATOR STRUCTURES/SPACES.

\subsection{Approximate Isomorphism in Discrete Logic}

Perhaps surprisingly, the concept of a distortion system is non-trivial in discrete logic.

\begin{defn}
A \emph{stratified language} is a language $\Lcal$ together with a designated sequence of sub-languages $\{\Lcal_i\}_{i<\omega}$ whose union is $\Lcal$. (Note that the sub-languages may have fewer sorts than the full language.)

In the context of a stratified language $\Lcal$, two $\Lcal$-structures $\frk{M}$, $\frk{N}$ are said to be \emph{approximately isomorphic}, written $\frk{M} \approxx_\Lcal \frk{N}$, if $\frk{M}\upharpoonright \Lcal_i \cong \frk{N}\upharpoonright \Lcal_i$ for every $i<\omega$. In general let $\rho_\Lcal(\frk{M},\frk{N}) = 2^{-i}$ where $i$ is the largest such that $\frk{M} \upharpoonright \Lcal_i \cong \frk{N} \upharpoonright \Lcal_i$ but $\frk{M} \upharpoonright \Lcal_{i+1} \not\cong \frk{N} \upharpoonright \Lcal_{i+1}$, or $0$ if no such $i$ exists. 

We may drop the subscript $\Lcal$ if the relevant stratified language is clear by context.
\end{defn}

Clearly $\rho_\Lcal$ is a pseudo-metric on $\Lcal$-structures.

\begin{prop}
Let $T$ be a discrete first-order theory (i.e.\ every predicate is $\{0,1\}$-valued in every model of $T$) and let $\Delta$ be a distortion system for $T$.

\begin{itemize}
    \item[(i)] For every finite set $\mathcal{S}_0\subseteq \mathcal{S}$ there is an $\e > 0$ such that if $\mathrm{dis}_{\Delta}(R) < \e$, then $R$ restricted to the sorts in $\mathcal{S}_0$ is the graph of a bijection. For every predicate symbol $P$ there is an $\e_P > 0$ such that whenever $\mathrm{dis}_{\Delta}(R) < \e_P$, then $R$ is the graph of a bijection that respects $R$.
    \item[(ii)] There is a stratification of $\Lcal$ such that $\rho_\Delta$ and $\rho_\Lcal$ are uniformly equivalent. In particular $\frk{M}\approxx_\Delta\frk{N}$ if and only if $\frk{M} \approxx_\Lcal \frk{N}$. 
\end{itemize}
\end{prop}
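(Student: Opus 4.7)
For part (i), the approach is a compactness argument combined with logical completeness in the discrete setting. Both $d$ and every predicate $P$ are $\{0,1\}$-valued on models of $T$, so the preimages $F_s := \{p \in S_2(T) : p \vdash d(x,y) = 0\}$ and $S_2(T) \setminus F_s$ are disjoint and closed, and likewise for the two $P$-level sets in $S_{n(P)}(T)$. Logical completeness of $\Delta$ forces $\delta_\Delta$ to be strictly positive across these pairs; compactness and lower semi-continuity of $\delta_\Delta$ then yield positive infima $\e_s$ and $\e_P$. If $\mathrm{dis}_\Delta(R) < \e_s$ and $(a,b),(a,c) \in R \upharpoonright s$, the componentwise correlation of $(a,a)$ with $(b,c)$ forces $\delta_\Delta(\mathrm{tp}(a,a), \mathrm{tp}(b,c)) \leq \mathrm{dis}_\Delta(R) < \e_s$, hence $b = c$. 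By symmetry $R^{-1}\upharpoonright s$ is functional, and since ``dense'' means ``full'' in discrete logic, $R \upharpoonright s$ is a bijection. For finite $\mathcal{S}_0$ take $\e = \min_{s \in \mathcal{S}_0}\e_s$; the $\e_P$ statement is identical.

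For part (ii), assume $\Lcal$ is countable (the general case adapts by indexing over $|\Lcal|$). Fix a countable dense subfamily of finitary formulas $\Delta_0 = \{\varphi_j\}_{j<\omega} \subseteq \Delta$ with $\mathrm{dis}_\Delta = \mathrm{dis}_{\Delta_0}$, enumerate the sorts and atomic predicates of $\Lcal$, and let $\Lcal_i$ be the finite sub-language containing the first $i$ sorts, the first $i$ atomic predicates, and every symbol occurring in $\varphi_0, \ldots, \varphi_{i-1}$, so that $\bigcup_i \Lcal_i = \Lcal$ and each $\varphi_j$ is an $\Lcal_i$-formula whenever $i > j$. The forward direction of uniform equivalence is then immediate from (i): given $\e > 0$ pick $i$ with $2^{-i} < \e$ and $\delta = \min\{\e_s, \e_P : s, P \in \Lcal_i\}$; any witnessing correlation for $\rho_\Delta(\frk{M},\frk{N}) < \delta$ is, by (i), an $\Lcal_i$-isomorphism, so $\rho_\Lcal(\frk{M},\frk{N}) \leq 2^{-i} < \e$.

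The reverse direction is the main obstacle. Assume for contradiction that for some $\e > 0$ and every $n$ there exist $\frk{M}_n, \frk{N}_n$ admitting an $\Lcal_n$-isomorphism $f_n$ yet $\rho_\Delta(\frk{M}_n, \frk{N}_n) \geq \e$. Extend $f_n$ to a correlation $R_n \in \mathrm{cor}(\frk{M}_n, \frk{N}_n)$ (using any correlation on sorts outside $\Lcal_n$) and form the ultraproduct $(\frk{M}_\infty, \frk{N}_\infty, R_\infty) = \prod_n (\frk{M}_n, \frk{N}_n, R_n)/\mathcal{U}$ over a non-principal ultrafilter. By construction $\mathrm{dis}_{\varphi_j}(R_n) = 0$ whenever $n \geq j$ (since $\varphi_j$ is an $\Lcal_n$-formula and $R_n$ restricts to $f_n$ on $\Lcal_n$-sorts), so Łoś's theorem yields $\mathrm{dis}_{\varphi_j}(R_\infty) = 0$ for every $j$, whence $\mathrm{dis}_\Delta(R_\infty) = \mathrm{dis}_{\Delta_0}(R_\infty) = 0$ and $(\frk{M}_\infty, \frk{N}_\infty, R_\infty) \models \mathrm{Th}(\Delta, \e')$ for every $\e' > 0$ by Proposition \ref{prop:recover}. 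The true delicacy is descending this back to some single factor $n$: since $\mathrm{Th}(\Delta, \e') = \mathrm{Th}(\Delta_0, \e')$ is a countable continuous theory, Łoś applied sentence-by-sentence gives, for each $j$ and each $\delta > 0$, that $\mathcal{U}$-most factors satisfy $\mathrm{dis}_{\varphi_j}(R_n) < \delta$. The remaining technical issue is controlling the countable supremum defining $\mathrm{dis}_\Delta$ against the non-$\sigma$-closedness of $\mathcal{U}$; this is handled by refining the stratification so that the tail $\{\varphi_j\}_{j \geq i}$ contributes at most some controlled amount $\alpha_i \to 0$ to $\mathrm{dis}_\Delta(R)$ on any $R$ respecting $\Lcal_i$-predicates (this in turn is arranged by interleaving an appropriate scaling step in the choice of $\Delta_0$, using the $1$-Lipschitz closure of $\Delta$ under $\delta_\Delta$). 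With this refinement in place, some factor $n$ has $\mathrm{dis}_\Delta(R_n) < \e$, contradicting $\rho_\Delta(\frk{M}_n, \frk{N}_n) \geq \e$.
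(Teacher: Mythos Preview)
Your argument for (i) is correct and is essentially the paper's proof unpacked: the paper simply cites the earlier lemma (that every predicate symbol is uniformly controlled by $\delta_\Delta$), and in the discrete setting this immediately yields the positive thresholds $\e_s$ and $\e_P$ via compactness, exactly as you write.

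For (ii), your forward direction is fine. The reverse direction, however, has a genuine gap. You correctly identify the obstruction: from $\mathrm{dis}_{\varphi_j}(R_\infty)=0$ for each $j$, \L o\'s only gives that for each fixed $j$ and $\delta>0$ the set of factors with $\mathrm{dis}_{\varphi_j}(R_n)<\delta$ is $\mathcal U$-large, and you cannot intersect countably many such sets. Your proposed repair does not close this gap. ``Interleaving a scaling step in the choice of $\Delta_0$'' means replacing $\varphi_j$ by something like $c_j\varphi_j$ with $c_j\to 0$; but then $\{c_j\varphi_j\}$ is no longer uniformly dense in $\Delta$ (it converges uniformly to $0$), so $\mathrm{dis}_{\{c_j\varphi_j\}}\neq \mathrm{dis}_\Delta$. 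What you would then be bounding is essentially $\mathrm{dis}_{\mathrm{fGHK}}$, and an arbitrary distortion system need not be uniformly equivalent to $\mathrm{fGHK}$. Likewise, the phrase ``the tail $\{\varphi_j\}_{j\geq i}$ contributes at most $\alpha_i\to 0$ on any $R$ respecting $\Lcal_i$-predicates'' is exactly the statement you are trying to prove, so invoking it is circular.

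The paper's route for (ii) is quite different and avoids the ultraproduct entirely. Rather than building $\Lcal_i$ out of the symbols appearing in a dense $\Delta_0$, it defines the stratification \emph{directly from the thresholds of part (i)}: put $P\in\Lcal_i$ iff $\e_P\geq 2^{-i}$ and every sort of $P$ has $\e_{=_s}\geq 2^{-i}$. With this choice the forward direction is immediate (any correlation with $\mathrm{dis}_\Delta(R)<2^{-i}$ automatically respects every $P\in\Lcal_i$, hence restricts to an $\Lcal_i$-isomorphism), and the paper asserts that the two distances then agree up to a bounded factor. The point is that the stratification is tuned to $\Delta$ through the $\e_P$'s rather than through an arbitrary enumeration of a dense subfamily; your enumeration-based stratification has no such link to $\delta_\Delta$, which is why the reverse direction cannot be recovered from it by a compactness argument alone.
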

\begin{proof}
\emph{(i):} This follows immediately from Lemma \ref{lem:coarsest}.

\emph{(ii):} Choose $\e_P$ as in part \emph{(i)} for all predicate symbols. For each $i<\omega$, let $\mathcal{S}_i$ be the set of sorts such that $\e_{=_s} \geq 2^{-i}$. Set $\Lcal_i$ to be the set of all predicate symbols $P$ such that $\e_P \geq 2^{-i}$ and for every sort $s$ of a variable in $P$, $\e_{=_s}\geq 2^{-i}$. 

Then, for sufficiently small distances, $\rho_\Lcal$ and $\rho_\Delta$ never differ by more than a factor of $4$, so they are uniformly equivalent.
\end{proof}

Note that $\Delta$ for a discrete theory will still contain continuous formulas (since we are implicitly considering it as a continuous theory) and these will be what gives it its structure.

\section{Scott Sentences for Functional Approximation Fragments}
\label{sec:Scott-sent}

Here we will develop back-and-forth pseudo-metrics, $r^\Delta_\alpha$, for arbitrary distortion systems, an extension of \cite{MSA}. In the case of separable structures with functional or u.u.c.\ distortion systems, $r^\Delta_\infty$ will be equal to the corresponding $\rho_\Delta$, but for some irregular distortion systems we will show that $r^\Delta_\infty \neq \rho_\Delta$ (in particular because $r^\Delta_\infty \leq a_\Delta < \rho_\Delta$).

As a corollary of this we will explicitly exhibit Scott sentences for $\Delta$-\hskip0pt equivalence with functional $\Delta$ (which are precisely the same as Ben Yaacov's perturbations \cite{OnPert}). This covers Banach-Mazur equivalence for Banach spaces and Lipschitz equivalence for metric spaces, which were not expressible in the framework of \cite{MSA}, although the existence of these was shown indirectly by the continuous Lopez-Escobar theorem in \cite{MSA} and results in \cite{ComplexDistNew}.

Many of the proofs in this section are nearly identical to the corresponding proofs in \cite{MSA}, so we will only sketch the important parts. We should pause to emphasize that \textbf{for bookkeeping purposes  in this section we are not treating all variables as interchangeable.} For $n<m$ we are thinking of $x_m$ as being (potentially) `more sensitive' than $x_n$, so more formulas are allowed to have $x_m$ as a variable than $x_n$. See section 2 of \cite{MSA}.

\begin{defn} Let $\Delta$ be a collection of formulas closed under renaming variables (typically a distortion system).

%\begin{itemize}
    %\item[(i)] A shift increasing weak modulus $\Omega$ is \emph{adequate for $\Delta$} if there is a countable sequence $\{\varphi_i\}_{i<\omega}\subset \Delta$, dense in $\Delta$ in the uniform norm, such that for every $i<\omega$ there is an $n$ such that $\varphi_i(x_n,x_{n+1},\dots,x_{n+k})$ is an $\Omega$-formula.
    %\item[(ii)] 

For any $\Lcal$-structures $\frk{M},\frk{N}$, $\bar{m}\in\frk{M}$, $\bar{n}\in\frk{N}$, and weak modulus $\Omega$, we define the \emph{$(\Delta,\Omega)$-back-and-forth pseudo-metrics}, $r^{\Delta,\Omega}_\alpha(\frk{M},\bar{m};\frk{N},\bar{n})$ as follows:
\begin{itemize}
    \item $r_0^{\Delta,\Omega}(\frk{M},\bar{m};\frk{N},\bar{n})$ is 
    $$\sup\{|\psi^\frk{M}(\bar{m})-\psi^\frk{N}(\bar{n})|:\psi\in\Delta,\,\psi\text{ respects }\Omega\text{  in every }\Lcal\text{-structure}\}.$$
    \item $r_{\alpha+1}^{\Delta,\Omega}(\frk{M},\bar{m};\frk{N},\bar{n})$ is
    $$\sup_{a\in\frk{M}}\inf_{b\in\frk{N}} r_{\alpha}^{\Delta,\Omega}(\frk{M},\bar{m}a;\frk{N},\bar{n}b)\uparrow \sup_{b\in\frk{N}}\inf_{a\in\frk{M}}r_{\alpha}^{\Delta,\Omega}(\frk{M},\bar{m}a;\frk{N},\bar{n}b).$$
    \item $r_{\lambda}^{\Delta,\Omega}(\frk{M},\bar{m};\frk{N},\bar{n})$ is
    $$\sup_{\alpha < \lambda} r_{\alpha}^{\Delta,\Omega}(\frk{M},\bar{m};\frk{N},\bar{n}),$$ for $\lambda$ a limit or $\infty$.

\end{itemize}

\end{defn}

This is the analog of Lemma 3.2 in \cite{MSA}; the proofs are essentially identical.

\begin{lem}

%\newline

\begin{itemize}
    \item[(i)] For fixed $\alpha\in \mathrm{Ord}\cup\{\infty\}$ and $k$, $r^{\Delta,\Omega}_\alpha$ is a pseudo-metric on the class of all pairs $(\frk{M},\bar{m})$, with $|\bar{m}|=k$.
    \item[(ii)] For every $\alpha$, $\frk{M}$, and $\bar{a},\bar{b} \in \frk{M}$, $r^{\Delta,\Omega}_\alpha(\frk{M},\bar{a};\frk{M},\bar{b}) \leq d^\Omega(\bar{a},\bar{b})$.
    \item[(iii)] For every $\alpha$, $\frk{M}$, and $\frk{N}$, and $k$, the function $(\bar{m},\bar{n})\mapsto r^{\Delta,\Omega}_\alpha(\frk{M},\bar{m};\frk{N},\bar{n})$ on pairs of $k$-tuples is uniformly continuous on $\frk{M}^k \times \frk{N}^k$ with regards to the max metric.
\end{itemize}
\end{lem}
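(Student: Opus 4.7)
The plan is to prove all three items simultaneously by transfinite induction on $\alpha$, with part (iii) following essentially from (i) and (ii) at each stage.

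For part (i), the base case is immediate: $r_0^{\Delta,\Omega}$ is a supremum of quantities of the form $|\psi^{\frk M}(\bar m) - \psi^{\frk N}(\bar n)|$ over a collection of formulas closed under permuting the roles of $\bar m$ and $\bar n$, so symmetry and nonnegativity are clear, the triangle inequality follows from $|a - c| \leq |a - b| + |b - c|$ together with the sublinearity of $\sup$, and $r_0(\frk M, \bar m; \frk M, \bar m) = 0$. For the successor step, I would use the standard back-and-forth chasing argument for the triangle inequality: given triples $(\frk{M},\bar m)$, $(\frk{N}, \bar n)$, $(\frk{O}, \bar o)$, for any $a \in \frk M$ pick $b \in \frk N$ approximately achieving the infimum in $\inf_b r_\alpha^{\Delta,\Omega}(\frk{M},\bar m a;\frk{N},\bar n b)$, then pick $c \in \frk O$ approximately achieving the infimum defining the bound from $\frk N$ to $\frk O$, and apply the inductive triangle inequality at the tuples $\bar m a$, $\bar n b$, $\bar o c$. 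The symmetric role of the two $\sup\inf$ terms in the definition of $r_{\alpha+1}^{\Delta,\Omega}$ gives symmetry, and the limit/infinity case is routine since a sup of pseudo-metrics is a pseudo-metric.

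For part (ii), I induct again. The base case is the defining property of ``respects $\Omega$'': if $\psi \in \Delta$ respects $\Omega$ in every $\Lcal$-structure, then $|\psi^{\frk M}(\bar a) - \psi^{\frk M}(\bar b)| \leq d^\Omega(\bar a, \bar b)$. For the successor step, given $c \in \frk M$ the right witness for the infimum is $d = c$ itself: the inductive hypothesis yields $r_\alpha^{\Delta,\Omega}(\frk{M}, \bar a c; \frk{M}, \bar b c) \leq d^\Omega(\bar a c, \bar b c)$, and the max-type aggregation built into the weak-modulus formalism makes $d^\Omega(\bar a c, \bar b c) = d^\Omega(\bar a, \bar b)$ because the new coordinate contributes $0$. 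The symmetric $\sup\inf$ term is handled identically, and the limit case is again a sup.

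For part (iii), I observe that (i) and (ii) together immediately give uniform continuity: using the triangle inequality from (i) and then (ii) twice,
\begin{align*}
&|r_\alpha^{\Delta,\Omega}(\frk{M},\bar m;\frk{N},\bar n) - r_\alpha^{\Delta,\Omega}(\frk{M},\bar m';\frk{N},\bar n')| \\
&\qquad\leq r_\alpha^{\Delta,\Omega}(\frk{M},\bar m;\frk{M},\bar m') + r_\alpha^{\Delta,\Omega}(\frk{N},\bar n;\frk{N},\bar n') \\
&\qquad\leq d^\Omega(\bar m, \bar m') + d^\Omega(\bar n, \bar n').
\end{align*}
Since $d^\Omega$ is itself uniformly continuous with respect to the max metric on tuples (a defining property of a weak modulus, with modulus independent of the ambient structure), the required uniform continuity of $r_\alpha^{\Delta,\Omega}$ follows. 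The main bookkeeping hurdle I anticipate is verifying the identity $d^\Omega(\bar a c, \bar b c) = d^\Omega(\bar a, \bar b)$ invoked in the successor step of (ii); this should follow directly from the weak-modulus formalism inherited from \cite{MSA}, but one must be careful here because the stratified treatment of variables (recalled just before the lemma) means that the $(k+1)$-th coordinate is potentially ``more sensitive'' than the earlier ones, so one needs the extension from $k$-tuples to $(k+1)$-tuples to preserve $\Omega$-distance when the new coordinate is held fixed.
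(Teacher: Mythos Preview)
Your proposal is correct and matches the paper's approach; the paper does not actually give a proof but simply notes that this is the analog of Lemma~3.2 in \cite{MSA} with essentially identical arguments, and the transfinite induction you outline is exactly that argument. Your one flagged concern, that $d^\Omega(\bar a c,\bar b c)=d^\Omega(\bar a,\bar b)$ when the new coordinate is fixed, is a direct consequence of the definition of $d^\Omega$ in \cite{MSA} (the contribution of a coordinate at distance $0$ vanishes regardless of its sensitivity), so there is no hidden obstacle there.
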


This is the analog of Lemma 3.3 in \cite{MSA}; again the proofs are essentially identical.

\begin{lem}
\begin{enumerate}[label=(\roman*)]
    \item For every $\alpha \leq \beta$, $r^{\Delta,\Omega}_\alpha \leq r^{\Delta,\Omega}_\beta$.
    \item For every pair of structures $\frk{M},\frk{N}$ with $\dc \frk{M},\dc \frk{N} \leq \kappa$, there is an $\alpha < \kappa^+$ such that $r^{\Delta,\Omega}_\alpha(\frk{M},\bar{m};\frk{N},\bar{n})=r^{\Delta,\Omega}_{\alpha+1}(\frk{M},\bar{m};\frk{N},\bar{n})$ for all pairs of tuples $\bar{m}\in\frk{M}$ and $\bar{n}\in\frk{N}$, which implies that in fact $r^{\Delta,\Omega}_\alpha(\frk{M},\bar{m};\frk{N},\bar{n})=r^{\Delta,\Omega}_{\infty}(\frk{M},\bar{m};\frk{N},\bar{n})$ for all such pairs of tuples.
\end{enumerate}
\end{lem}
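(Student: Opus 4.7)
The plan is to follow the structure of Lemma 3.3 of \cite{MSA}, with each part proved by transfinite induction.

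For part (i), I induct on $\beta$ with $\alpha$ fixed; it suffices to verify $r_\beta \leq r_{\beta+1}$ and $r_\beta \leq r_\lambda$ for limit $\lambda$, the latter being immediate from the definition. The successor step rests on an auxiliary claim, \emph{monotonicity of $r_\alpha$ in tuple length}: for every $a \in \frk{M}$ and $b \in \frk{N}$, $r^{\Delta,\Omega}_\alpha(\frk{M},\bar{m}a;\frk{N},\bar{n}b) \geq r^{\Delta,\Omega}_\alpha(\frk{M},\bar{m};\frk{N},\bar{n})$. This is proved by a parallel induction on $\alpha$: the base case uses that any $\psi \in \Delta$ with free variables among $x_0,\dots,x_{|\bar{m}|-1}$ respecting $\Omega$ can be viewed as a formula with one extra unused variable that still respects $\Omega$, since higher-indexed variables carry only weaker moduli in the stratification. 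Plugging this into the successor definition of $r_{\beta+1}$ immediately forces $r_{\beta+1} \geq r_\beta$.

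For part (ii), fix dense subsets $D_\frk{M} \subseteq \frk{M}$ and $D_\frk{N} \subseteq \frk{N}$ of size at most $\kappa$, and set $X = \bigcup_{k<\omega} D_\frk{M}^k \times D_\frk{N}^k$, so $|X| \leq \kappa$. For each $(\bar{m},\bar{n}) \in X$, part (i) says the sequence $(r^{\Delta,\Omega}_\alpha(\frk{M},\bar{m};\frk{N},\bar{n}))_\alpha$ is a monotonically non-decreasing transfinite sequence in $[0,\infty]$, hence eventually constant at some countable ordinal (a strictly increasing chain of reals has length at most $\aleph_0$). Taking the supremum $\alpha^*$ of these stabilization ordinals over the $\leq \kappa$ pairs in $X$ gives $\alpha^* < \kappa^+$.

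It remains to bootstrap stabilization on $X$ up to stabilization everywhere, and then beyond $\alpha^*$. Parts (ii) and (iii) of the preceding lemma together furnish a uniform modulus of continuity shared by all $r_\alpha$'s (the bound $r_\alpha(\frk{M},\bar{a};\frk{M},\bar{b}) \leq d^\Omega(\bar{a},\bar{b})$ combined with the triangle inequality for the pseudo-metric $r_\alpha$ suffices), so $r_{\alpha^*}$ and $r_{\alpha^*+1}$ are uniformly continuous with a common modulus; agreement on the dense set $X$ then forces $r_{\alpha^*} = r_{\alpha^*+1}$ on all tuple pairs. A routine induction on $\beta \geq \alpha^*$ then gives $r_\beta = r_{\alpha^*}$ (successor levels collapse via the recursion and limits via the supremum definition), so in particular $r_\infty = r_{\alpha^*}$. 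The main technical obstacle is this last bootstrap: since the successor definition of $r_{\alpha^*+1}$ takes suprema and infima over all of $\frk{M}$ and $\frk{N}$ rather than over $D_\frk{M}, D_\frk{N}$, one must separately invoke uniform continuity of $r_{\alpha^*}$ in its trailing arguments to legitimately replace these by their dense-subset counterparts when comparing the two expressions.
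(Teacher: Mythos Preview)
Your proposal is correct and follows the same route as the paper, which simply defers to Lemma~3.3 of \cite{MSA} with the remark that ``the proofs are essentially identical.'' Your write-up is thus more detailed than what the paper provides.

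One minor organizational point about part~(i): the auxiliary tuple-length monotonicity claim is only needed at level~$0$ to launch the induction. Once you know $r_0(\frk{M},\bar m;\frk{N},\bar n)\le r_0(\frk{M},\bar m a;\frk{N},\bar n b)$, the inequality $r_\alpha\le r_{\alpha+1}$ follows by a straight induction on~$\alpha$ (the successor step uses only the inductive hypothesis $r_\alpha\le r_{\alpha+1}$ applied at longer tuples, and the limit step is immediate). Proving tuple-length monotonicity for \emph{all}~$\alpha$ by a parallel induction, as you outline, requires at the successor stage comparing $r_\alpha(\bar m c;\bar n d)$ with $r_\alpha(\bar m a c;\bar n b d)$, which is insertion in the middle rather than extension at the end; this does go through (the variable stratification allows shifting indices upward), but you did not spell this out. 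Using the claim only at level~$0$ sidesteps the issue entirely.
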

 
This is the analog of Proposition 3.4 in \cite{MSA}. See \cite{MSA} for the definition of \emph{shift increasing}.

\begin{prop} \label{prop:tail-witness}
Let $\frk{M},\frk{N}\models T$ be separable. For any $\bar{m}\in\frk{M}$ and $\bar{n}\in\frk{N}$, $r^{\Delta,\Omega}_\infty(\frk{M},\bar{m};\frk{N},\bar{n}) < \e$ if and only if there exists tail-dense sequences $\{a_i\}_{i<\omega} \subseteq \frk{M}$ and $\{b_i\}_{i<\omega} \subseteq \frk{N}$ starting with $\bar{m}$ and $\bar{n}$, respectively, such that

$$\sup_{n<\omega}r^{\Delta,\Omega}_0(\frk{M},a_{<n};\frk{N},b_{<n}) < \e,$$

where a sequence is tail-dense if every final segment of it is metrically dense.
\end{prop}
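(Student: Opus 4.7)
The plan is to mirror the back-and-forth argument used for Proposition 3.4 of \cite{MSA}, adapted to our distortion-system setting; both directions are essentially standard once the bookkeeping is set up correctly.

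For the forward direction, I would fix $\e'' < \e$ with $r^{\Delta,\Omega}_\infty(\frk{M},\bar{m};\frk{N},\bar{n}) \leq \e''$ and run a back-and-forth Henkin construction. Fix enumerations $\{m^\ast_i\}_{i<\omega}$ and $\{n^\ast_i\}_{i<\omega}$ of countable dense subsets of $\frk{M}$ and $\frk{N}$. At each stage extend the $a$- or $b$-sequence and use the fixed-point identity $r^{\Delta,\Omega}_\infty = r^{\Delta,\Omega}_{\infty+1}$ (from the preceding lemma) to produce a matching element on the opposite side, keeping the current $r^{\Delta,\Omega}_\infty$-value below $\e$. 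To force tail-density I schedule the stages so that by stage $k$ each of $m^\ast_0,\dots,m^\ast_k$ has been approximated by some inserted element up to error $\delta_k$, where the $\delta_k$ are chosen with $\sum_k \delta_k < \e - \e''$; then every $m^\ast_j$ is approximated arbitrarily far along the $a$-sequence, so every final segment of $\{a_i\}$ is dense, and symmetrically for $\{b_i\}$.

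For the reverse direction, fix $\e' < \e$ with $\sup_n r^{\Delta,\Omega}_0(\frk{M},a_{<n};\frk{N},b_{<n}) \leq \e'$ and prove by transfinite induction on $\alpha$ that $r^{\Delta,\Omega}_\alpha(\frk{M},a_{<n};\frk{N},b_{<n}) \leq \e'$ for every $n<\omega$. The base case is the hypothesis, the limit case is immediate from the definition, and at a successor step, to verify the forth clause for a given $n$ and $a\in\frk{M}$, I would use tail-density of $\{a_i\}_{i\geq n}$ to choose $a_i$ arbitrarily close to $a$, take $b := b_i$, and bound $r^{\Delta,\Omega}_\alpha(\frk{M},a_{<n}a;\frk{N},b_{<n}b)$ by combining the induction hypothesis on the longer initial segment through index $i$ (via a dropping-variables observation on the back-and-forth metric) with the uniform continuity of $r^{\Delta,\Omega}_\alpha$ in its tuple arguments from Lemma~(iii) above. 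Symmetrizing yields the back clause.

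The main obstacle is the successor step of the $(\Leftarrow)$ induction: the weak modulus $\Omega$ makes later variable positions less sensitive, so inserting $a_i$ at position $n$ rather than its native position $i$ is \emph{a priori} a tightening, not a relaxation. The resolution, as in \cite{MSA}, is that tail-density is calibrated precisely to the $\Omega$-geometry that governs the formulas in the definition of $r_0^{\Delta,\Omega}$, so the continuity estimate of Lemma~(iii) absorbs exactly the error introduced by the repositioning. Everything else is routine.
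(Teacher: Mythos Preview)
Your approach is the same as the paper's: it simply cites Proposition~3.4 of \cite{MSA} and says the proof is essentially identical, so your plan to run the standard back-and-forth (forward direction via the fixed-point $r^{\Delta,\Omega}_\infty = r^{\Delta,\Omega}_{\infty+1}$, reverse direction by transfinite induction using tail-density) is exactly right.

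One correction to your final paragraph: you have the sensitivity direction reversed. The paper's convention (stated just above the proposition) is that for $n<m$ the variable $x_m$ is \emph{more} sensitive than $x_n$, meaning more formulas are permitted to depend on later positions. Consequently, if $\psi(x_0,\dots,x_n)\in\Delta$ respects $\Omega$, then the shifted formula $\tilde\psi(x_0,\dots,x_i)\coloneqq\psi(x_0,\dots,x_{n-1},x_i)$ also respects $\Omega$ (this is what ``shift increasing'' provides), and so
\[
r^{\Delta,\Omega}_0(\frk{M},a_{<n}a_i;\frk{N},b_{<n}b_i)\ \leq\ r^{\Delta,\Omega}_0(\frk{M},a_{<i+1};\frk{N},b_{<i+1}),
\]
and the same inequality propagates to all $r^{\Delta,\Omega}_\alpha$ by a straightforward induction. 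In other words, inserting $a_i$ at the earlier position $n$ is a \emph{relaxation}, not a tightening: the ``main obstacle'' you flag dissolves immediately under the paper's convention, and no delicate calibration of tail-density against $\Omega$ is needed beyond the uniform-continuity bound from the preceding lemma. Everything else in your sketch is fine.
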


\begin{cor} \label{cor:nice-b-a-f} Let $\Delta$ be a distortion system for $T$, a theory in a countable language. Let $\Omega$ be a weak modulus.
\begin{itemize}
    \item[(i)] For any models $\frk{M}$ and $\frk{N}$ and tuples $\bar{m}$ and $\bar{n}$, we have that $r^{\Delta,\Omega}_\infty(\frk{M},\bar{m};\allowbreak \frk{N},\bar{n})\leq a_\Delta(\frk{M},\bar{m};\frk{N},\bar{n})$. (In particular since $r^{\Delta,\Omega}_\infty$ is a pseudo-metric, this implies that the function $(\frk{M},\bar{m};\frk{N},\bar{n})\mapsto r^{\Delta,\Omega}_\infty(\frk{M},\bar{m};\frk{N},\bar{n})$ is $2$-Lipschitz in $\rho_\Delta$.)
    \item[(ii)] If $\Delta$ is u.u.c.\ and $\Omega$ is shift increasing with the property that for any $\varphi\in \Delta$, $\varphi(x_0,x_1,x_2,\dots)$ is an $\Omega$-formula, then for any separable models $\frk{M}$ and $\frk{N}$ and tuples $\bar{m}$ and $\bar{n}$, we have that $r^{\Delta,\Omega}_\infty(\frk{M},\bar{m};\frk{N},\bar{n}) = \rho_\Delta(\frk{M},\bar{m};\frk{N},\bar{n})$. 
    \item[(iii)] If $\Delta$ is functional then for any sequence $\{\varphi_i\}_{i<\omega} \subset \Delta$ dense in $\Delta$ in the uniform norm and if $\Omega$ is shift increasing such that for any $i<\omega$ there exists an $n<\omega$ such that $\varphi_i(x_n,x_{n+1},\dots,x_{n+k})$ is an $\Omega$-formula, then there is an $\e > 0$ such that for any separable models $\frk{M}$ and $\frk{N}\models T$, if $r^{\Delta,\Omega}_\infty(\frk{M},\frk{N}) < \e$, then $r^{\Delta,\Omega}_\infty(\frk{M},\frk{N}) = \rho_\Delta(\frk{M},\frk{N})$. 
\end{itemize}
\end{cor}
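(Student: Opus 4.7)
The plan is to dispatch the three parts in order, with (ii) and (iii) both reducing to (i) together with Proposition~\ref{prop:tail-witness} and the regularity consequences of Proposition~\ref{prop:nice-regular}(i). For (i), fix any $\eta > a_\Delta(\frk{M},\bar{m};\frk{N},\bar{n})$ and pick an almost correlation $R \in \mathrm{acor}(\frk{M},\bar{m};\frk{N},\bar{n})$ with $\mathrm{dis}_\Delta(R) < \eta$, supported between dense sub-pre-structures $\frk{M}_0 \subseteq \frk{M}$ and $\frk{N}_0 \subseteq \frk{N}$. By transfinite induction on $\alpha$, I would show $r^{\Delta,\Omega}_\alpha(\frk{M},\bar{m};\frk{N},\bar{n}) \leq \eta$. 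The base case is immediate because $\Omega$-respecting $\Delta$-formulas lie in $\Delta$ and $(\bar{m},\bar{n}) \in R$. At a successor, given $a \in \frk{M}$, approximate $a$ by $a' \in \frk{M}_0$ to within $\delta$, choose $b' \in \frk{N}_0$ correlated to $a'$, apply the induction hypothesis to $R$ viewed as an almost correlation of the extended tuples (whose $\Delta$-distortion is unchanged), and then use uniform continuity of $r^{\Delta,\Omega}_\alpha$ in its tuple arguments to absorb the $\delta$-error as $\delta \to 0$. Limits are immediate, and taking the sup over $\alpha$ yields $r^{\Delta,\Omega}_\infty \leq a_\Delta$.

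For (ii), the inequality $r^{\Delta,\Omega}_\infty \leq a_\Delta \leq \rho_\Delta$ is just (i). For the reverse, assume $r^{\Delta,\Omega}_\infty < \e$ and apply Proposition~\ref{prop:tail-witness} to produce tail-dense sequences $\{a_i\} \subseteq \frk{M}$, $\{b_i\} \subseteq \frk{N}$ starting with $\bar{m}, \bar{n}$ and satisfying $r^{\Delta,\Omega}_0 < \e$ on every initial segment. Because the hypothesis on $\Omega$ makes every $\Delta$-formula an $\Omega$-formula, this immediately gives $|\psi^\frk{M}(a_{<n}) - \psi^\frk{N}(b_{<n})| < \e$ for every $\psi \in \Delta$, so the diagonal pairing $R = \{(a_i, b_i) : i < \omega\}$ is an almost correlation with $\mathrm{dis}_\Delta(R) \leq \e$. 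Hence $a_\Delta < \e$, and since u.u.c.\ implies regular, Proposition~\ref{prop:nice-regular}(i) gives $\rho_\Delta = a_\Delta \leq \e$, as required.

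For (iii), choose $\e > 0$ smaller than the functionality threshold of $\Delta$ (which also serves for its regularity). Assuming $r^{\Delta,\Omega}_\infty(\frk{M},\frk{N}) < \e$, apply Proposition~\ref{prop:tail-witness} to obtain tail-dense sequences $\{a_i\}, \{b_i\}$ with $r^{\Delta,\Omega}_0 < \e$ uniformly. Given $\eta > 0$, pick a finite $F \subseteq \omega$ so that $\{\varphi_j : j \in F\}$ is $\eta$-dense in $\Delta$ under the uniform norm, and set $M = \max_{j \in F} n_j$. By tail-density and the absence of parameters, $R_M = \{(a_i, b_i) : i \geq M\}$ is an almost correlation between $\frk{M}$ and $\frk{N}$. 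For each $j \in F$ and every tuple of indices $i_1, \ldots, i_{k_j} \geq M \geq n_j$, the shift-increasing property of $\Omega$ makes $\varphi_j(x_{i_1}, \ldots, x_{i_{k_j}})$ an $\Omega$-formula, so the $r^{\Delta,\Omega}_0$-bound gives $|\varphi_j^\frk{M}(a_{i_1}, \ldots) - \varphi_j^\frk{N}(b_{i_1}, \ldots)| < \e$. Uniform $\eta$-density promotes this to $\mathrm{dis}_\Delta(R_M) \leq \e + 2\eta$, and letting $\eta \to 0$ yields $a_\Delta(\frk{M},\frk{N}) \leq \e$. Regularity then upgrades this to $\rho_\Delta(\frk{M},\frk{N}) \leq \e$; combined with (i), we get $r^{\Delta,\Omega}_\infty = \rho_\Delta$ on the open ball in question.

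The main obstacle is the stratified bookkeeping in (iii): one must confirm that shift-increasingness genuinely covers every tuple of indices above $M$ (not only the canonical tuple $(n_j, n_j+1, \ldots, n_j + k_j)$ appearing in the hypothesis), that the finite $\eta$-dense set $F$ always yields a finite maximum threshold $M$, and that the choice of $\e$ sits below both the functionality and regularity thresholds so that Proposition~\ref{prop:nice-regular}(i) applies to the small-distortion almost correlations $R_M$ as constructed. In (i) the only real subtlety is checking that the uniform-continuity modulus used at each successor step is genuinely uniform in the tuple arguments, which is precisely what the lemma preceding the corollary supplies.
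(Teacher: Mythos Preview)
Your arguments for (i) and (ii) are correct and match the paper's proofs essentially line for line: transfinite induction using an almost correlation as a back-and-forth strategy for (i), and the tail-dense sequences from Proposition~\ref{prop:tail-witness} producing an almost correlation $\{(a_i,b_i)\}$ for (ii).

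For (iii), however, there is a genuine gap. Your key step is to ``pick a finite $F \subseteq \omega$ so that $\{\varphi_j : j \in F\}$ is $\eta$-dense in $\Delta$ under the uniform norm.'' No such finite set exists: $\Delta$ is closed under $1$-Lipschitz connectives and hence contains all constant formulas, so it is not even bounded in the uniform norm, let alone totally bounded. Without a finite $\eta$-net you cannot choose a single threshold $M$ that works for enough formulas to control $\mathrm{dis}_\Delta(R_M)$. (There is also a minor slip: as written you only conclude $\rho_\Delta \leq \e$, the fixed functionality threshold, rather than $\rho_\Delta \leq \e'$ for every $\e' > r^{\Delta,\Omega}_\infty$; but this is easily repaired by replacing $\e$ with such an $\e'$ when invoking Proposition~\ref{prop:tail-witness}.)

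The paper avoids the total-boundedness problem by a genuinely different construction. Rather than truncating the sequence at a finite threshold, it takes the \emph{limit} relation
\[
R=\bigl\{(c,e):\exists\, i(j)\to\infty,\ a_{i(j)}\to c,\ b_{i(j)}\to e\bigr\},
\]
so that for \emph{any} fixed $\varphi_i$ one may choose approximating indices above $n_i$ and get $\mathrm{dis}_{\{\varphi_i\}}(R)\leq\eta$; density of the $\varphi_i$ in $\Delta$ then gives $\mathrm{dis}_\Delta(R)\leq\eta$. The real work is showing that $R$ is a \emph{full} correlation, and this is exactly where functionality is used: via Proposition~\ref{prop:nice-regular}(ii), the formulas witnessing functionality force $\{b_{i(j)}\}$ to be Cauchy whenever $\{a_{i(j)}\}$ is. Your approach tried to get by with regularity alone (functional $\Rightarrow$ regular $\Rightarrow$ $a_\Delta=\rho_\Delta$), but the failure of total boundedness blocks the construction of the required almost correlation; the paper instead exploits the stronger functional hypothesis directly to build a correlation outright.
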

\begin{proof}
\emph{(i):} If $\frk{M}$ and $\frk{N}$ are separable we can just use the previous proposition.

The idea of the proof is that we can use an almost correlation between $\frk{M}$ and $\frk{N}$ as a back-and-forth strategy. We will proceed by induction, showing that for every $\alpha$, $r^{\Delta,\Omega}_\alpha(\frk{M},\bar{m};\frk{N},\bar{n})\leq a_\Delta(\frk{M},\bar{m};\frk{N},\bar{n})$.

$r^{\Delta,\Omega}_0(\frk{M},\bar{m};\frk{N},\bar{n})\leq a_\Delta(\frk{M},\bar{m};\frk{N},\bar{n})$ clearly holds, as does the limit case.

Assume that $r^{\Delta,\Omega}_\alpha(\frk{M},\bar{m};\frk{N},\bar{n})\leq a_\Delta(\frk{M},\bar{m};\frk{N},\bar{n})$ holds for all tuples $\bar{m}$ and $\bar{n}$ and consider $r^{\Delta,\Omega}_{\alpha+1}(\frk{M},\bar{m};\frk{N},\bar{n})$. Fix an $\e > 0$ and find $R \in \mathrm{acor}(\frk{M},\bar{m};\frk{N},\bar{n})$ such that $\mathrm{dis}_\Delta(R) < a_\Delta(\frk{M},\bar{m};\frk{N},\bar{n}) + \frac{1}{2}\e$.

Now for any $a\in \frk{M}$ find an $a^\prime$ such that there is some $b\in \frk{N}$ with $(a^\prime,b)\in R$ and such that $a$ and $a^\prime$ are close enough that 
\[
|r^{\Delta,\Omega}_{\alpha}(\frk{M},\bar{m}a;\frk{N},\bar{n}b)-r^{\Delta,\Omega}_{\alpha}(\frk{M},\bar{m}a^\prime;\frk{N},\bar{n}b)| \leq \frac{1}{2}\e
\]
 (this exists since $r^{\Delta,\Omega}_{\alpha}$ is uniformly continuous in the tuple arguments). By the induction hypothesis, $r^{\Delta,\Omega}_{\alpha}(\frk{M},\bar{m}a^\prime;\frk{N},\bar{n}b) \leq a_\Delta(\frk{M},\bar{m}a^\prime;\frk{N},\bar{n}b)\leq \mathrm{dis}_\Delta(R)$, so we have that $r^{\Delta,\Omega}_{\alpha}(\frk{M},\bar{m}a;\frk{N},\bar{n}b) < a_\Delta(\frk{M},\bar{m};\frk{N},\bar{n}) +\e $. Since we can do this for any $\e>0$ and we can do the same thing for any $b \in \frk{N}$, we have that $r^{\Delta,\Omega}_{\alpha+1}(\frk{M},\bar{m};\frk{N},\bar{n}) \leq a_\Delta(\frk{M},\bar{m};\frk{N},\bar{n})$.

\emph{(ii):} From part \emph{(i)} we already have that $r^{\Delta,\Omega}_\infty \leq \rho_\Delta$, so we just need to show that $\rho_\Delta \leq r^{\Delta,\Omega}_\infty$.

Fix $\e>0$ and assume that $r^{\Delta,\Omega}_\infty(\frk{M},\bar{m};\frk{N},\bar{n}) < \e$. As guaranteed by Proposition \ref{prop:tail-witness}, let $\bar{a},\bar{b}$ be tail-dense sequences in $\frk{M}$ and $\frk{N}$ which begin with $\bar{m}$ and $\bar{n}$, respectively, such that for every $n<\omega$, $r^{\Delta,\Omega}_0(\frk{M},a_{<n};\frk{N},b_{<n}) < \e$. By the condition on $\Omega$ this implies that $R = \{(a_i,b_i):i<\omega\}$ is an almost correlation between $(\frk{M},\bar{m})$ and $(\frk{N},\bar{n})$ such that $\mathrm{dis}_\Delta(R)\leq \e$. Since $\Delta$ is regular this implies that $\rho_\Delta(\frk{M},\bar{m};\frk{N},\bar{n})\leq \e$. Since we can do this for any $\e > r^{\Delta,\Omega}_\infty(\frk{M},\bar{m};\frk{N},\bar{n})$, we have that $\rho_\Delta(\frk{M},\bar{m};\frk{N},\bar{n})\leq r^{\Delta,\Omega}_\infty(\frk{M},\bar{m};\frk{N},\bar{n})$.

\emph{(iii):} %Let $\e$ be such that for any models $\frk{M},\frk{N}\models T$ and closed $R\in\mathrm{acor}(\frk{M},\frk{N})$, if $\mathrm{dis}_\Delta(R) < \e$, then $R$ is the graph of a uniformly continuous bijection between $\frk{M}$ and $\frk{N}$ with uniformly continuous inverse. 
From part \emph{(i)} we already have that $r^{\Delta,\Omega}_\infty \leq \rho_\Delta$, so we just need to show that $\rho_\Delta \leq r^{\Delta,\Omega}_\infty$.

By Proposition \ref{prop:nice-regular} part \emph{(ii)}, there is a $\delta > 0$ such that for any $\gamma > 0$ there is a formula $\psi(x,y) \in \Delta$ such that for any $\frk{M}\models T$ and $a,b\in \frk{M}$, $\psi^\frk{M}(a,a)=0$, and if $\psi^\frk{M}(a,b) < \delta$, then $d^\frk{M}(a,b) < \gamma$. By the density of the sequence $\{\varphi_i\}_{i<\omega}$, there is an $m(\gamma)$ such that $\lVert \varphi_{m(\gamma)} -\psi \rVert_\infty < \frac{1}{3}\delta$ (in any $\Lcal$-structure). This implies that for any $\frk{M}\models T$ and $a,b\in\frk{M}$, if $\varphi_{m(\gamma)}^\frk{M}(a,a) < \frac{1}{3}\delta$ and $\varphi_{m(\gamma)}^\frk{M}(a,b)<\frac{2}{3}\delta$, then $d^\frk{M}(a,b) < \gamma$.

Now  assume that $r^{\Delta,\Omega}_\infty(\frk{M},\frk{N}) < \frac{1}{6}\delta$ and pick $\eta$ such that $r^{\Delta,\Omega}_\infty(\frk{M},\frk{N}) < \eta < \frac{1}{6}\delta$. As guaranteed by Proposition \ref{prop:tail-witness}, let $\bar{a},\bar{b}$ be tail-dense sequences in $\frk{M}$ and $\frk{N}$, respectively, such that for every $n<\omega$, $r^{\Delta,\Omega}_0(\frk{M},a_{<n};\frk{N},b_{<n}) < \eta$.

Let $R$ be the set of all pairs $(c,e)\in\frk{M}\times \frk{N}$ such that there exists a sequence $\{i(j)\}_{j<\omega}$ of natural numbers such that $\lim_{j\rightarrow \infty} i(j) =\infty$ and $\{a_{i(j)}\}_{j<\omega}$ is a Cauchy sequence limiting to $c$ and $\{b_{i(j)}\}_{j<\omega}$ is a Cauchy sequence limiting to $e$. By the uniform continuity of formulas and the fact that $\Omega$ is shift-increasing, it is clear that $\mathrm{dis}_{\{\varphi_i\}}(R) \leq \eta$. Since $\{\varphi_i\}$ is dense in $\Delta$ this implies that $\mathrm{dis}_{\Delta}(R) \leq \eta$. 

So now we just need to show that $R$ is a correlation. Pick $c\in\frk{M}$ and let $\{i(j)\}_{j<\omega}$ be a sequence of natural numbers such that $\lim_{j\rightarrow \infty} i(j) =\infty$ and $\{a_{i(j)}\}_{j<\omega}$ is a Cauchy sequence limiting to $c$, which must exist by the tail-denseness of $\{a_i\}_{i<\omega}$. Consider the sequence $\{b_{i(j)}\}_{j<\omega}$.

Pick $\gamma > 0$ and consider the formula $\varphi_{m(\gamma)}$, as specified above. Find a $\sigma > 0$ such that if $d(xy,zw) < \sigma$, then $|\varphi_{m(\gamma)}(x,y) - \varphi_{m(\gamma)}(z,w)| < \frac{1}{6}\delta$ (in any $\Lcal$-structure). 

Find an $N(\gamma)$ such that $\varphi_{m(\gamma)}(x_{N(\gamma)},x_{N(\gamma)+1})$ is an $\Omega$-formula and such that for all $j,k\geq N(\gamma)$, $d^\frk{M}(a_{i(j)},a_{i(k)}) < \sigma$. This implies that for any $j,k \geq N(\gamma)$, $$|\varphi_{m(\gamma)}^\frk{M}(a_{i(j)},a_{i(k)}) - \varphi_{m(\gamma)}^\frk{M}(a_{i(j)},a_{i(j)})| < \frac{1}{6}\delta,$$ so in particular $$\varphi_{m(\gamma)}^\frk{M}(a_{i(j)},a_{i(k)}) < \frac{1}{3}\delta + \frac{1}{6}\delta =\frac{1}{2}\delta < \frac{2}{3}\delta.$$

This implies that for any $j,k\geq N(\gamma)$, $$\varphi_{m(\gamma)}^\frk{N}(b_{i(j)},b_{i(k)}) < \frac{1}{2}\delta + \eta < \frac{1}{2}\delta + \frac{1}{6}\delta = \frac{2}{3}\delta.$$
By construction this implies that $d^\frk{N}(b_{i(j)},b_{i(k)}) < \gamma$. Since we can do this for any $\gamma > 0$, we have that $\{b_{i(j)}\}_{j<\omega}$ is a Cauchy sequence in $\frk{N}$, converging to some point $e$, so we have that $(c,e)\in R$.

By symmetry we can do the same for Cauchy sequences in $\frk{N}$, showing that $R$ is a correlation, so we have that $\rho_\Delta(\frk{M},\frk{N}) \leq r^{\Delta,\Omega}_\infty(\frk{M},\frk{N})$. Therefore $ r^{\Delta,\Omega}_\infty(\frk{M},\frk{N}) = \rho_\Delta(\frk{M},\frk{N})$ whenever $r^{\Delta,\Omega}_\infty(\frk{M},\frk{N}) <  \e = \frac{1}{6}\delta$.
\end{proof}

We should note that the case of u.u.c.\ distortion systems is very close to something that can be captured by the original formalism in \cite{MSA}. In particular if $\Delta$ is a u.u.c.\ distortion system for a first-order theory $T$, then $T$ is interdefinable with a theory $T^\prime$ in a uniformly Lipschitz language \cite{MetSpaUniv} and the back-and-forth pseudo-metric coming from the $1$-Lipschitz weak modulus will be uniformly equivalent to the original $\rho_\Delta$ for separable structures. %??????OTHER CASE??????

In cases where we know that $a_\Delta$ is not a pseudo-metric, we automatically know from part \emph{(i)} that $r^{\Delta,\Omega}_\infty < a_\Delta < \rho_\Delta$, since $r^{\Delta,\Omega}_\infty$ and $\rho_\Delta$ are pseudo-metrics.

So now we can continue on to construct Scott sentences. This the analog of Definition 3.6 in \cite{MSA}.

\begin{defn}
Let $\Delta$ be a collection of formulas closed under renaming variables and $\Omega$ a weak modulus. For a pair of models $\frk{M},\frk{N}\models T$, $\alpha_{\frk{M},\frk{N}}$ is the least ordinal $\alpha$ such that for all tuples $\bar{m}\in\frk{M}$ and $\bar{n}\in\frk{N}$, $r^{\Delta,\Omega}_{\alpha}(\frk{M},\bar{m};\frk{N},\bar{n}) = r^{\Delta,\Omega}_{\alpha+1}(\frk{M},\bar{m};\frk{N},\bar{n})$. This is called the \emph{$(\Delta,\Omega)$-Scott rank of the pair $\frk{M}$ and $\frk{N}$}. If $\frk{M}=\frk{N}$ we just write $\alpha_\frk{M}$, which is the \emph{$(\Delta,\Omega)$-Scott rank of $\frk{M}$}.
\end{defn}

Just like Lemma 3.7 in \cite{MSA} we have that if $r^{\Delta,\Omega}_\infty(\frk{M},\frk{N})=0$, then $\alpha_{\frk{M}}=\alpha_{\frk{M},\frk{N}}=\alpha_{\frk{N}}$.

To define Scott sentences we need to specify what we mean by $\Lcal_{\omega_1\omega}^{\Delta,\Omega}$. 
%This definition isn't as broad as it could be, but rather is permissive enough to have Scott sentences but conservative enough to be (strictly) included in the definition in \cite{MSA} when $\Delta$ is the collection of all first-order formulas.

\begin{defn}
Given a collection of first-order formulas $\Delta$, closed under renaming variables, and a weak modulus $\Omega$,  \emph{$n$-ary $\Lcal_{\omega_1\omega}^{\Delta,\Omega}$-formulas} are defined inductively. We also need to inductively define the \emph{syntactic range}, written $I(\varphi)$, of such formulas. For first-order formulas this is the convex closure of the set of possible values of the formula in $\Lcal$-structures, which is always a compact interval.
\begin{itemize}
    \item If $\varphi \in \Delta$ and $\varphi(x_0,\dots,x_{n-1})$ obeys $\Omega$, then $\varphi(x_0,\dots,x_{n-1})$ is an $n$-ary $\Lcal_{\omega_1\omega}^{\Delta,\Omega}$-formula. $I(\varphi)$ is the syntactic range of $\varphi$ as a first-order formula.

    \item For any compact interval $I$, if $\{\varphi_i\}_{i<\omega}$ is a sequence of $n$-ary $\Lcal_{\omega_1\omega}^{\Delta,\Omega}$-formulas such that $I$ is the closure of $\bigcup_{i<\omega}I(\varphi_i)$, then $\psi = \sup_i \varphi_i$ and $\chi = \inf_i \varphi_i$ are $n$-ary $\Lcal_{\omega_1\omega}^{\Delta,\Omega}$-formulas and $I(\psi)=I(\chi)=I$.
    \item If $\varphi$ is an $(n+1)$-ary  $\Lcal_{\omega_1\omega}^{\Delta,\Omega}$-formula then $\psi = \sup_{x_n}\varphi$ and $\chi=\inf_{x_n}\varphi$ are $n$-ary $\Lcal_{\omega_1\omega}^{\Delta,\Omega}$-formulas and $I(\psi)=I(\chi)=I$.    
    \item If $\varphi_1,\dots,\varphi_k$ is a finite list of $n$-ary $\Lcal_{\omega_1\omega}^{\Delta,\Omega}$-formulas and $F:\mathbb{R}^k \rightarrow \mathbb{R}$ is a $1$-Lipschitz connective, then $\psi = F(\varphi_1,\dots,\varphi_k)$ is an $n$-ary $\Lcal_{\omega_1\omega}^{\Delta,\Omega}$-formula and $I(\psi)$ is the image of $I(\varphi_1)\times\dots\times I(\varphi_k)$ under $F$ (which is always a compact interval).
   % \item If $\varphi$ is an $\Lcal_{\omega_1\omega}^{\Delta,\Omega}$-formula then for any reals $a<b$, $\psi = [\varphi]_a^b$ is an $\Lcal_{\omega_1\omega}^{\Delta,\Omega}$-formula and $I(\psi)=I(\varphi)\cap [a,b]$.

An \emph{$\Lcal_{\omega_1\omega}^{\Delta,\Omega}$-formula} is an $n$-ary $\Lcal_{\omega_1\omega}^{\Delta,\Omega}$-formula for some $n$ and an $\Lcal_{\omega_1\omega}^{\Delta,\Omega}$-sentence is a $0$-ary $\Lcal_{\omega_1\omega}^{\Delta,\Omega}$-formula. \qedhere
\end{itemize}
\end{defn}

The interpretation of an $\Lcal_{\omega_1\omega}^{\Delta,\Omega}$-formula in a $\Lcal$-structure is obvious. It is also not hard to show that an $n$-ary $\Lcal_{\omega_1\omega}^{\Delta,\Omega}$-formula $\varphi$ is always $\Omega{\upharpoonright} n$-uniformly continuous and can only take on values in the interval $I(\varphi)$ in $\Lcal$-structures.

Next, just like in \cite{MSA} we get that for every countable ordinal $\alpha$, $n<\omega$, separable model $\frk{M}\models T$, and tuple $\bar{m}\in\frk{M}$, there is an $\Lcal_{\omega_1\omega}^{\Delta,\Omega}$-formula $\varphi_{\alpha, \frk{M},\bar{m}}$ such that for all models $\frk{N}\models T$ and tuples $\bar{n}\in\frk{N}$,
$$\varphi_{\alpha,n, \frk{M},\bar{m}}^\frk{N}(\bar{n})=r^{\Delta,\Omega}_\alpha(\frk{M},\bar{m};\frk{N},\bar{n}) \downarrow 1.$$
Fix a countable dense sub-pre-structure $\frk{M}_0=\{a_i\}_{i<\omega}$. We define these formulas inductively. 
$$\varphi_{0,n, \frk{M},\bar{m}}(x_0,\dots,x_{n-1}) = \sup_i |\psi_i^\frk{M}(\bar{a})-\psi_i(\bar{x})|,$$
where $\{\psi_i\}_{i<\omega}$ is a countable sequence of $n$-ary $\Delta$-formulas respecting $\Omega$ that are dense in the uniform norm in the collection of $n$-ary $\Delta$-formulas respecting $\Omega$. 

For a successor stage  we define $\varphi_{\alpha+1,n, \frk{M},\bar{m}}(x_0,\dots,x_{n-1})$ to be
$$\sup_i \inf_{x_n} \varphi_{\alpha,n+1, \frk{M},\bar{m}a_i}(x_0,\dots,x_n)\uparrow \sup_{x_n}\inf_i\varphi_{\alpha,n+1, \frk{M},\bar{m}a_i}(x_0,\dots,x_n).$$
And then for limit $\lambda$, obviously we define
$$\varphi_{\lambda, n, \frk{M},\bar{m}}(x_0,\dots,x_{n-1}) = \sup_{\alpha < \lambda}\varphi_{\alpha,n, \frk{M},\bar{m}}(x_0,\dots,x_{n-1}).$$

Now finally if $\alpha = \alpha_\frk{M}$ is the $(\Delta,\Omega)$-Scott rank of $\frk{M}$, then we define the $(\Delta,\Omega)$-Scott sentence, $\sigma_\frk{M}$, by
$$\sigma_\frk{M} = \varphi_{\alpha,0,\frk{M}}\uparrow \sup_{n<\omega,\bar{a}\in\frk{M}_0^n} \sup_{x_0,\dots,x_{n-1}} \frac{1}{2}|\varphi_{\alpha,n, \frk{M},\bar{a}}-\varphi_{\alpha+1,n, \frk{M},\bar{a}}|, $$
i.e.\ $\frk{N}\models \sigma_\frk{M} \leq 0$ if and only if $\alpha_\frk{N}=\alpha_\frk{M}$ and $r^{\Delta,\Omega}_\alpha(\frk{M},\frk{N})=0$.

Now we get the following analog of Theorem 3.8 in \cite{MSA}.

\begin{thm} \label{thm:Scott-sent}
If $\Delta$ is a u.u.c.\ or functional distortion system for $T$, then for any separable $\frk{M},\frk{N}\models T$, $\frk{N}\models \sigma_\frk{M} \leq 0$ if and only if $\rho_\Delta(\frk{M},\frk{N})=0$.
\end{thm}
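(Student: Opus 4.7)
The plan is to reduce the theorem to Corollary~\ref{cor:nice-b-a-f} by recognizing that $\frk{N}\models\sigma_\frk{M}\leq 0$ is equivalent to $r^{\Delta,\Omega}_\infty(\frk{M},\frk{N})=0$, and then transferring to $\rho_\Delta$ via the u.u.c.\ or functional hypothesis. Writing $\alpha = \alpha_\frk{M}$, the condition $\frk{N}\models\sigma_\frk{M}\leq 0$ unpacks to (a) $\varphi^\frk{N}_{\alpha,0,\frk{M}} = 0$, i.e.\ $r^{\Delta,\Omega}_\alpha(\frk{M},\frk{N})=0$, and (b) for every $n$, $\bar{a}\in\frk{M}_0^n$ and $\bar{x}\in\frk{N}^n$, $r^{\Delta,\Omega}_\alpha(\frk{M},\bar{a};\frk{N},\bar{x}) = r^{\Delta,\Omega}_{\alpha+1}(\frk{M},\bar{a};\frk{N},\bar{x})$. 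Since the back-and-forth pseudo-metric is uniformly continuous in its tuple arguments and $\frk{M}_0$ is dense in $\frk{M}$, (b) extends to all $\bar{a}\in\frk{M}^n$; a straightforward transfinite induction (iterating one $\sup\!/\!\inf$ step at a time) then promotes stabilization from rank $\alpha$ to all $\beta\geq\alpha$, so together (a) and (b) force $r^{\Delta,\Omega}_\infty(\frk{M},\frk{N}) = r^{\Delta,\Omega}_\alpha(\frk{M},\frk{N}) = 0$. This is the natural analog of Lemma 3.7 of \cite{MSA}.

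Given this equivalence, the forward direction is immediate: if $\frk{N}\models\sigma_\frk{M}\leq 0$ then $r^{\Delta,\Omega}_\infty(\frk{M},\frk{N})=0$, and Corollary~\ref{cor:nice-b-a-f}(ii) gives $\rho_\Delta(\frk{M},\frk{N})=0$ in the u.u.c.\ case, while in the functional case Corollary~\ref{cor:nice-b-a-f}(iii) applies because $0$ lies below the threshold $\e>0$ it supplies. For the reverse direction, $\rho_\Delta(\frk{M},\frk{N})=0$ yields $r^{\Delta,\Omega}_\infty(\frk{M},\frk{N})=0$ by Corollary~\ref{cor:nice-b-a-f}(i) (which gives $r^{\Delta,\Omega}_\infty \leq a_\Delta \leq \rho_\Delta$), and we then need to recover $\frk{N}\models\sigma_\frk{M}\leq 0$.

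To close the reverse direction I would use the fact that $r^{\Delta,\Omega}_\beta$ is a pseudo-metric for every $\beta$. Given any $\bar{a}\in\frk{M}$ and $\bar{x}\in\frk{N}$, the triangle inequality yields
\[
r^{\Delta,\Omega}_\beta(\frk{M},\bar{a};\frk{N},\bar{x}) \leq r^{\Delta,\Omega}_\beta(\frk{M},\bar{a};\frk{M},\bar{a}) + r^{\Delta,\Omega}_\beta(\frk{M};\frk{N}) \uparrow r^{\Delta,\Omega}_\infty(\frk{N};\frk{N}),
\]
and using the extension of back-and-forth matchings (picking $\bar{m}\in\frk{M}$ realizing the same $r^{\Delta,\Omega}_\infty$-type as $\bar{x}$) one shows $r^{\Delta,\Omega}_\alpha(\frk{M},\bar{a};\frk{N},\bar{x}) = r^{\Delta,\Omega}_{\alpha+1}(\frk{M},\bar{a};\frk{N},\bar{x})$ for $\alpha = \alpha_\frk{M}$, and symmetrically that $\alpha_\frk{N} = \alpha$. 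Both clauses of $\sigma_\frk{M}\leq 0$ then follow.

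The main obstacle is this propagation step in the reverse direction: vanishing of $r^{\Delta,\Omega}_\infty(\frk{M},\frk{N})$ must be shown to force agreement of Scott ranks and stabilization at rank $\alpha_\frk{M}$ across all matched tuples. Once that pseudo-metric/triangle-inequality manipulation is carried out (mirroring the argument in \cite{MSA}), both directions reduce to invocations of Corollary~\ref{cor:nice-b-a-f}, and the u.u.c.\ and functional cases are treated uniformly.
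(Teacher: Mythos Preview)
Your approach is correct and matches the paper's, which gives no explicit proof and simply defers to Theorem~3.8 of \cite{MSA}: the equivalence between $\frk{N}\models\sigma_\frk{M}\leq 0$ and $r^{\Delta,\Omega}_\infty(\frk{M},\frk{N})=0$ is exactly the Lemma~3.7 analog (stated just before the definition of $\sigma_\frk{M}$), and Corollary~\ref{cor:nice-b-a-f} then converts $r^{\Delta,\Omega}_\infty=0$ to $\rho_\Delta=0$ and back. One cosmetic point: the displayed triangle inequality in your reverse-direction paragraph is malformed (the arities of the arguments don't match and the stray $\uparrow$ is out of place); the inequality your surrounding prose actually describes is
\[
\bigl|r^{\Delta,\Omega}_\beta(\frk{M},\bar{a};\frk{N},\bar{x}) - r^{\Delta,\Omega}_\beta(\frk{M},\bar{a};\frk{M},\bar{m})\bigr| \;\leq\; r^{\Delta,\Omega}_\beta(\frk{M},\bar{m};\frk{N},\bar{x}),
\]
with $\bar{m}\in\frk{M}$ chosen so that $r^{\Delta,\Omega}_\infty(\frk{M},\bar{m};\frk{N},\bar{x})$ is arbitrarily small, which then reduces stabilization at $(\bar{a},\bar{x})$ to stabilization at $(\bar{a},\bar{m})$, guaranteed by $\alpha=\alpha_\frk{M}$.
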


Given the comment after Corollary \ref{cor:nice-b-a-f}, we know that Theorem \ref{thm:Scott-sent} is simply false for irregular distortion systems (and we will examine an example in Section \ref{sec:Irreg}), so two natural questions arise:

\begin{quest}
Do Corollary \ref{cor:nice-b-a-f} and Theorem \ref{thm:Scott-sent} hold for any regular distortion system with the appropriate choice of $\Omega$?
\end{quest}

\begin{quest}
For a fixed separable model $\frk{M}\models T$, are either of the collections of separable models $\{\frk{N} : \rho_\Delta(\frk{M},\frk{N})=0\}$ or $\{\frk{N} : a_\Delta(\frk{M},\frk{N})=0\}$ Borel in the sense of \cite{MSA} when $\Delta$ is an irregular distortion system?
\end{quest}

\section{Stability of Bounded $\Rb$-trees and ultrametric spaces}

Let $\Lc_{r}$ be the metric signature with a single constant symbol $p$ and a $[0,2r]$-valued metric. In \cite{Carlisle2020}, it is established that there is an $\Lc_r$-theory $\RTr$ whose models are precisely the (metrically complete) pointed $\Rb$-trees of radius at most $r$. Furthermore, \cite{Carlisle2020} shows that $\RTr$ has a model companion $\mathrm{rb}\RTr$ (for richly branching $\Rb$-tree of radius $r$) and that $\mathrm{rb}\RTr$ is strictly stable. Here we will generalize this result, and then use this generalization to show that all ultrametric spaces are stable.

\begin{prop}\label{prop:stab-delta-closed}
  Fix an incomplete $\Lc$-theory $T$ and a distortion system $\Delta$ for $T$. The set $\{T' \in S_0(T) : T'~\text{is stable}\}$ is closed in the metric $\delta_\Delta^0$.%For any unstable completion $T_0$ of $T$, there is an $\e > 0$ such that if $\delta^0_\Delta(T_0,T_1) < \e$ for some completion $T_1\supset T$, then $T_1$ is unstable as well.
\end{prop}
\begin{proof}
  Fix an unstable completion $T_0 \supset T$. Let $(a_i)_{i<\omega}$ be a sequence of elements of some $\frk{M} \models T_0$ such that for some formula $\varphi(x,y)$ and $r<s$, $\varphi(a_i,a_j)\leq r$ if $i\leq j$ and $\varphi(a_i,a_j) \geq s$ if $j < i$. Since $\Delta$ is logically complete, we may assume that $\varphi(x,y)$ is in $\Delta$ (changing the values of $r$ and $s$ if necessary).

  Fix $\e > 0$ with $\e < \frac{1}{3}(s-r)$. Suppose that $T_1 \supset T$ is a completion such that $\delta_\Delta^0(T_0,T_1) < \e$. By passing to a larger $\frk{M}$ if necessary, we may assume that $\rho_\Delta(\frk{M},\frk{N}) < \e$ for some $\frk{N} \models T_1$. Let $R \subseteq \frk{M} \times \frk{N}$ be a correlation such that $\mathrm{dis}_\Delta(R) < \e$. Let $(b_i)_{i<\omega}$ be a sequence such that $R(a_i,b_i)$ for all $i<\omega$. We now immediately have that $\varphi(b_i,b_j)< \frac{2}{3}r + \frac{1}{3}s$ if $i\leq j$ and $\varphi(b_i,b_j) > \frac{1}{3}r+\frac{2}{3}s$ if $j < i$. Therefore, the sequence $(b_i)_{i<\omega}$ witnesses that $T_1$ is unstable as well.

  Since we can do this for any unstable completion of $T$, we have that the set of unstable completions of $T$ is $\delta^0_\Delta$-open, so we are done.
\end{proof}

Note that Proposition~\ref{prop:stab-delta-closed} easily generalizes to many dividing lines, such as simplicity, NIP, NTP$_2$, NSOP$_1$, etc.

\begin{prop}\label{prop:R-tree-stab}
  For any $r \geq 0$, every completion of $\RTr$ is stable.
\end{prop}
\begin{proof}
  Fix a metrically complete rooted $\Rb$-tree $E$ of radius at most $r$. Let $\Delta$ be the distortion system generated by the formulas $\frac{1}{2}d(x,y)$ and $\frac{1}{2}d(x,p)$. It is immediate that $\Delta$ is atomically complete and therefore that $\overline{\Delta}$ is a distortion system by Proposition~\ref{prop:atom-comp}. ($\overline{\Delta}$ is a pointed version of the Gromov-Hausdorff distortion system.) For any positive $n<\omega$, let $E_n$ be the sub-structure of $E$ consiting of those $a \in E$ such that $\frac{n}{r}d(x,p)$ is a natural number. Let $R_n \subseteq E \times E_n$ be the relation $d(x,y) \leq \frac{r}{n}$.

  Proposition~\ref{prop:enlarge} and a direct calculation show that for any positive $n<\omega$,
  \[
\mathrm{dis}_\Delta R_n = \mathrm{dis}_{\overline{\Delta}}R_n \leq \frac{1}{2}\left(2\frac{r}{n}\right) = \frac{r}{n},
\]
whence $\mathrm{Th}(E_n)\to \mathrm{Th}(E)$ in the metric $\delta^0_\Delta$.

Finally, it is a classical result that the theory of any rooted tree of finite height is superstable. Each of the structures $E_n$ is bi-interpretable with a discrete rooted tree of height at most $n$, so by Proposition~\ref{prop:stab-delta-closed}, we have that $\mathrm{Th}(E)$ is stable, as required.
\end{proof}

Note, however, that we cannot conclude that completions of $\RTr$ are always \emph{super}stable, as demonstrated in \cite{Carlisle2020}.

\begin{cor}
  For any ultrametric space $(X,d)$ with finite diameter, $\mathrm{Th}(X)$ (in the empty language) is stable.
\end{cor}
\begin{proof}
  Let the diameter of $X$ be $r$. Put a pseudo-metric on $X \times [0,r]$ by
  \[
\rho((x,s),(y,t)) = \inf\{(u-s)+(u-t) : (\exists z \in X) B_{\leq s}(x) \cup B_{\leq t}(y)\subseteq B_{\leq u}(z)\}.
\]
The following facts are easy to verify: For any $x,y \in X$, $\rho((x,r),(y,r)) = 0$. $(X\times [0,r])/\rho$ is an $\Rb$-tree. The map $x \mapsto (x,0)$ is an isometric embedding of $X$ into $(X\times [0,r])/\rho$.

Taking $p$ to be $(x,r)$ (for any $x \in X$) gives a pointed $\Rb$-tree of radius $r$. The image of $X$ under the isometric embedding $(X \times [0,r])/\rho$ is definable by the formula $r-d(x,p)$. By Proposition~\ref{prop:R-tree-stab}, $\mathrm{Th}((X\times [0,r])/\rho, p)$ is stable. Therefore $\mathrm{Th}(X)$ is stable as well, since $X$ is isometric to a definable subset of a stable structure.
\end{proof}

\section{An Irregular Distortion System} \label{sec:Irreg}

Here we will give some explicit examples of the pathological behavior of $a_\Delta$ and the separation of $r^{\Delta,\Omega}_\infty$ and $\rho_\Delta$ for irregular $\Delta$.

\begin{defn}
Let $\Lcal$ be the single-sorted language with a $[0,1]$-valued metric and a single $[0,1]$-valued $1$-Lipschitz unary predicate $U$.

Let $\mathrm{GH}_0 =\{\frac{1}{2}d(x,y)\}$ and let $\mathrm{GH}=\ol{\mathrm{GH}_0}$.

Let $\mathrm{IU}_0 = \mathrm{GH}_0\cup\{nU(x)\}_{n<\omega}$ and let $\mathrm{IU} = \ol{\mathrm{IU}_0}$.
\end{defn}

Clearly $\mathrm{GH}$ corresponds to the ordinary Gromov-Hausdorff distance, ignoring $U$. The notion of approximate isomorphism induced by $\mathrm{IU}$ is strange.

\begin{prop} \label{prop:irreg-ex1}
Let $\frk{M},\frk{N}$ be $\Lcal$-structures and let $R \subseteq \frk{M}\times \frk{N}$. 

$\mathrm{dis}_\mathrm{IU}(R) \leq \e <\infty$ if and only if $\mathrm{dis}_{\mathrm{GH}}(R) \leq \e$ and for all $(a,b) \in R$, $U^\frk{M}(a)=U^\frk{N}(b)$.
\end{prop}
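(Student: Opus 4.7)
The plan is to reduce the claim to its explicit generators via Proposition \ref{prop:enlarge}, after which it becomes a direct unwinding of definitions. By that proposition, $\mathrm{dis}_{\mathrm{IU}}(R) = \mathrm{dis}_{\mathrm{IU}_0}(R)$ and $\mathrm{dis}_{\mathrm{GH}}(R) = \mathrm{dis}_{\mathrm{GH}_0}(R)$, so I can work entirely with the countable generating sets.

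Since $\mathrm{IU}_0 = \mathrm{GH}_0 \cup \{nU(x)\}_{n<\omega}$, the supremum in the definition of distortion splits as
\[
\mathrm{dis}_{\mathrm{IU}_0}(R) = \mathrm{dis}_{\mathrm{GH}_0}(R) \uparrow \sup_{n<\omega}\sup_{(a,b)\in R} n\bigl|U^\frk{M}(a) - U^\frk{N}(b)\bigr|.
\]
For the $(\Rightarrow)$ direction, if the left-hand side is at most $\e < \infty$, then the first summand gives $\mathrm{dis}_{\mathrm{GH}}(R) \leq \e$ and the second forces $n|U^\frk{M}(a) - U^\frk{N}(b)| \leq \e$ for every $n<\omega$ and every $(a,b)\in R$, whence $U^\frk{M}(a) = U^\frk{N}(b)$ by letting $n \to \infty$. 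For the $(\Leftarrow)$ direction, if $U$ is matched on every pair of $R$ then the second supremum is identically zero, so $\mathrm{dis}_{\mathrm{IU}_0}(R) = \mathrm{dis}_{\mathrm{GH}_0}(R) = \mathrm{dis}_{\mathrm{GH}}(R) \leq \e$, and then $\mathrm{dis}_{\mathrm{IU}}(R) \leq \e$ by the reduction above.

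There is no serious obstacle here; the proposition is essentially a bookkeeping observation, and the only substantive point is that the countable family $\{nU(x)\}_{n<\omega}$ already enforces exact agreement of $U$-values whenever any finite distortion bound is met. The appeal to Proposition \ref{prop:enlarge} to pass from $\mathrm{IU}$ and $\mathrm{GH}$ back to their generating sets is what makes this finite test legitimate.
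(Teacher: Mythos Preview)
The paper states this proposition without proof, and your argument via Proposition~\ref{prop:enlarge} is exactly the intended one-line justification: reduce to the generating sets and read off the equivalence.

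There is one subtlety worth flagging. Your appeal to Proposition~\ref{prop:enlarge} is only valid when $R$ is a correlation: the quantifier step in that proposition's proof explicitly uses totality of $R$ to find a partner $b$ for a near-minimizer $a$. For an arbitrary $R \subseteq \frk{M}\times\frk{N}$, the identity $\mathrm{dis}_{\ol{\Delta}}(R) = \mathrm{dis}_\Delta(R)$ can fail. In fact the $(\Leftarrow)$ direction of the proposition itself fails for arbitrary $R$: take $\frk{M}=\{a_1,a_2\}$ with $U(a_1)=0$, $U(a_2)=1$, $d(a_1,a_2)=1$; take $\frk{N}=\{b\}$ with $U(b)=0$; and let $R=\{(a_1,b)\}$. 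Then $U$ agrees on $R$ and $\mathrm{dis}_{\mathrm{GH}}(R)\leq \tfrac{1}{2}$ (every formula in $\mathrm{GH}$ has range of width $\leq\tfrac{1}{2}$), yet the sentences $\sup_x nU(x)\in\mathrm{IU}$ witness $\mathrm{dis}_{\mathrm{IU}}(R)=\infty$.

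So the proposition as literally stated is too strong, and your proof correctly establishes it under the tacit hypothesis that $R$ is a correlation (or almost correlation), which is the only case the paper ever uses. Your $(\Rightarrow)$ direction, incidentally, does not need Proposition~\ref{prop:enlarge} at all: $\mathrm{GH}\subseteq\mathrm{IU}$ and $\{nU(x)\}\subseteq\mathrm{IU}$ give it directly for any $R$.
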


So $R$ needs to be a correlation that rigidly obeys $U$ but which is as loose as the Gromov-Hausdorff distance for $d$.

\begin{defn}
For any $D \subseteq [0,1]$ and $\e \in [0,1]$, let $\frk{I}(D,\e)$ be the $\Lcal$-pre-structure whose universe is $D$ with $U^{\frk{I}(D,\e)}(x)=x$ for $x \in D$ and $d^{\frk{I}(D,\e)}(x,y) = |x-y|\uparrow \e$ for $x,y\in D$ with $x\neq y$.
\end{defn}

\begin{prop}
$(\mathrm{Mod}(T,\leq \omega),\rho_{\mathrm{IU}})$ is not complete as a metric space, where $T$ is the empty theory in the language $\Lcal$.
\end{prop}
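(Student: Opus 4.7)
The plan is to exhibit an explicit Cauchy sequence in $(\mathrm{Mod}(T,\leq\omega),\rho_{\mathrm{IU}})$ with no limit. I would set $D = \mathbb{Q} \cap [0,1]$ and $\frk{M}_n = \frk{I}(D, 2^{-n})$. Each $\frk{M}_n$ is a valid $\Lcal$-structure, is complete (distinct points are at distance $\geq 2^{-n}$, so Cauchy sequences in $\frk{M}_n$ are eventually constant), and has countable density character, so $\frk{M}_n \in \mathrm{Mod}(T,\leq\omega)$. The diagonal $R_{n,m} = \{(x,x) : x \in D\}$ is a correlation preserving $U$ exactly, and the elementary estimate $|d^{\frk{M}_n}(x,y) - d^{\frk{M}_m}(x,y)| \leq 2^{-\min(n,m)}$ combined with Proposition \ref{prop:irreg-ex1} yields $\rho_{\mathrm{IU}}(\frk{M}_n, \frk{M}_m) \leq 2^{-(\min(n,m)+1)}$. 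So $(\frk{M}_n)$ is Cauchy.

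Now suppose for contradiction that some $\frk{N} \in \mathrm{Mod}(T,\leq\omega)$ satisfies $\rho_{\mathrm{IU}}(\frk{M}_n, \frk{N}) \to 0$, witnessed by correlations $S_n$ with $\mathrm{dis}_{\mathrm{IU}}(S_n) \to 0$. Proposition \ref{prop:irreg-ex1} forces each $S_n$ to preserve $U$ exactly; combined with totality and surjectivity this gives $U^{\frk{N}}(\frk{N}) = D$, since every $y \in \frk{N}$ has some correlate $x \in \frk{M}_n$ with $U(y) = x \in D$, and every $q \in D$ arises as the $U$-value of its correlate. Moreover, because $U$ is injective on $\frk{M}_n$, any $y \in \frk{N}$ with $U(y) = q$ can only be correlated with $q$ itself, so totality of $S_n$ forces $(q, y) \in S_n$ for every $n$.

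The key step, and the subtlest part, is to fix an irrational $\alpha \in [0,1]$, choose rationals $q_k \in D$ with $q_k \to \alpha$, pick any $y_k \in \frk{N}$ with $U(y_k) = q_k$, and show that $(y_k)$ is Cauchy in $d^{\frk{N}}$. The preceding paragraph gives $(q_k, y_k), (q_\ell, y_\ell) \in S_n$ for every $n$, so the $\mathrm{GH}$-part of $\mathrm{dis}_{\mathrm{IU}}(S_n)$ yields $|d^{\frk{M}_n}(q_k, q_\ell) - d^{\frk{N}}(y_k, y_\ell)| \to 0$ with $k,\ell$ fixed. Since $d^{\frk{M}_n}(q_k, q_\ell) = \max(|q_k - q_\ell|, 2^{-n}) \to |q_k - q_\ell|$, I conclude $d^{\frk{N}}(y_k, y_\ell) = |q_k - q_\ell|$ exactly, so $(y_k)$ is Cauchy. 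By completeness of $\frk{N}$, $y_k \to y^* \in \frk{N}$, and since $U$ is $1$-Lipschitz, $U(y^*) = \lim q_k = \alpha \notin \mathbb{Q}$, contradicting $U^{\frk{N}}(\frk{N}) = D \subseteq \mathbb{Q}$. Morally, the $U$-rigidity of $\mathrm{IU}$-correlations prevents $\frk{N}$ from containing the ``new'' irrational points that Gromov--Hausdorff convergence of the underlying metric spaces to $[0,1]$ demands.
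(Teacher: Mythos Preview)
Your proof is correct and follows the same strategy as the paper: exhibit the Cauchy sequence $\frk{I}(D,2^{-n})$ and argue that the $U$-rigidity of $\mathrm{IU}$-correlations forces any putative complete limit to contain a point with a forbidden $U$-value. The paper uses the sparser set $X=\{2^{-i}:i<\omega\}$ (so the single missing limit point is $0$) and gives only a three-line sketch, whereas you take $D=\mathbb{Q}\cap[0,1]$ and spell out the details the paper leaves to the reader; one small terminological slip is that where you write ``totality of $S_n$ forces $(q,y)\in S_n$'' you are actually using surjectivity of $S_n$ onto $\frk{N}$.
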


\begin{proof}
Let $X = \{2^{-i}\}_{i<\omega}$ and consider the sequence of structures $\{\frk{I}(X,2^{-k})\}_{k<\omega}$. These converge in $\rho_{\mathrm{IU}}$ to the \emph{pre-structure} $\frk{I}(X,0)$, but the corresponding completion is $\frk{I}(X\cup\{0\},0)$ and $\rho_\Delta(\frk{I}(X,0),\frk{I}(X\cup\{0\},0))=\infty$. 
\end{proof}

%This gives an easy example of a pre-structure $\frk{M}$ such that $a_\Delta(\frk{M},\ol{\frk{M}})=0$ (as it always does) but $\rho_\Delta(\frk{M},\ol{\frk{M}})=\infty$. Later we will see an example of a pair of complete structures with this as well.

\begin{prop}
Let $D_0$ and $D_1$ be countable, disjoint, dense subsets of $[0,1]$, then
\begin{align*}
 \rho_\mathrm{IU}(\frk{I}(D_0,\e),\frk{I}([0,1],0)) &= \infty, \\
 \rho_\mathrm{IU}(\frk{I}(D_1,\e),\frk{I}([0,1],0)) &= \infty, \\
 \rho_\mathrm{IU}(\frk{I}(D_0,\e),\frk{I}(D_1,\e)) &= \infty,
 \end{align*}
but
\begin{align*}
a_\mathrm{IU}(\frk{I}(D_0,\e),\frk{I}([0,1],0)) &= \frac{1}{2}\e, \\
a_\mathrm{IU}(\frk{I}(D_1,\e),\frk{I}([0,1],0)) &= \frac{1}{2}\e, \\
a_\mathrm{IU}(\frk{I}(D_0,\e),\frk{I}(D_1,\e)) &= \infty.
\end{align*}

Furthermore $\frk{I}(D_0,\e)$, $\frk{I}(D_1,\e)$, and $\frk{I}([0,1],0)$ are all metrically complete and separable.
\end{prop}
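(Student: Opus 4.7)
The plan is to handle each assertion by exploiting the rigidity that $U$ forces on any finite-distortion (almost) correlation, combined with the uniform discreteness of $\frk{I}(D_i,\e)$.

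First, the ``furthermore'' statement is essentially immediate: each $\frk{I}(D_i,\e)$ has all distinct-point distances $\geq \e > 0$, so it is uniformly discrete, hence trivially complete, and countability of $D_i$ gives separability. The structure $\frk{I}([0,1],0)$ is just $[0,1]$ with the usual metric, which is complete and separable.

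For the $\rho_\mathrm{IU}$ values I would invoke Proposition~\ref{prop:irreg-ex1}: any correlation $R$ with $\mathrm{dis}_\mathrm{IU}(R) < \infty$ must satisfy $U^\frk{M}(a) = U^\frk{N}(b)$ for every $(a,b) \in R$. Since $U^{\frk{I}(D,\e)}(x) = x$, this forces $R \subseteq \{(x,x) : x \text{ in the common universe}\}$. For $\rho(\frk{I}(D_i,\e),\frk{I}([0,1],0))$, surjectivity onto $[0,1]$ would then require $[0,1] \subseteq D_i$, contradicting countability. For $\rho(\frk{I}(D_0,\e),\frk{I}(D_1,\e))$, one would need $R \subseteq \{(x,x):x \in D_0 \cap D_1\} = \emptyset$, which cannot be a correlation between nonempty sets. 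So in all three cases no finite-distortion correlation exists.

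The $a_\mathrm{IU}$ values with $\frk{I}([0,1],0)$ are the main content. The crucial observation is that $\frk{I}(D_i,\e)$, being uniformly discrete, has \emph{only itself} as a dense sub-pre-structure, while dense sub-pre-structures of $\frk{I}([0,1],0)$ are exactly the $\frk{I}(E,0)$ with $E\subseteq[0,1]$ dense. So an almost correlation is a correlation between $\frk{I}(D_i,\e)$ and some $\frk{I}(E,0)$. The same $U$-rigidity forces $R\subseteq\{(x,x)\}$, and then totality and surjectivity together pin down $E = D_i$ with $R$ the graph of the identity on $D_i$. For this unique candidate, $U$ is exactly preserved and
\[
\left|\tfrac{1}{2}d^{\frk{I}(D_i,\e)}(x,y) - \tfrac{1}{2}d^{\frk{I}(D_i,0)}(x,y)\right| \;=\; \tfrac{1}{2}\max\!\bigl(0,\,\e - |x-y|\bigr),
\]
whose supremum over distinct $x,y \in D_i$ equals $\tfrac{1}{2}\e$ by density of $D_i$ in $[0,1]$. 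Proposition~\ref{prop:enlarge} reduces $\mathrm{dis}_\mathrm{IU}(R)$ to $\mathrm{dis}_{\mathrm{IU}_0}(R)$, so this computation finishes the upper and lower bounds simultaneously, yielding $a_\mathrm{IU} = \tfrac{1}{2}\e$ for both $D_0$ and $D_1$.

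Finally, for $a_\mathrm{IU}(\frk{I}(D_0,\e),\frk{I}(D_1,\e))$, both structures are uniformly discrete, so almost correlations coincide with correlations between the full structures, and the disjointness $D_0 \cap D_1 = \emptyset$ combined with the $R \subseteq \{(x,x)\}$ constraint again rules out any finite-distortion correlation. The only step requiring any care is the reduction of $\mathrm{dis}_\mathrm{IU}$ to the generators $\mathrm{IU}_0$ via Proposition~\ref{prop:enlarge}; once that is in hand, everything else is bookkeeping on which pairs $(x,y)$ can appear in $R$.
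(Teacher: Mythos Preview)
Your proof is correct and follows the same approach as the paper's own argument: use Proposition~\ref{prop:irreg-ex1} to force $R\subseteq\{(x,x)\}$, observe that uniform discreteness of $\frk{I}(D_i,\e)$ collapses almost correlations to correlations on the $D_i$-side, and identify the diagonal on $D_i$ as the unique finite-distortion almost correlation with $\frk{I}([0,1],0)$. You supply more detail than the paper does---in particular the explicit computation of $\mathrm{dis}_{\mathrm{IU}_0}$ on the diagonal and the totality/surjectivity argument pinning down $E=D_i$---but the skeleton is identical.
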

\begin{proof}
For $\rho_\Delta$ there simply are no correlations between these structures that satisfy the requirement on $U$ given in Proposition \ref{prop:irreg-ex1}. Any almost correlation between $\frk{I}(D_0,\e)$ and $\frk{I}(D_1,\e)$ is automatically a correlation, so the same holds for almost correlations.

For $\frk{I}(D_i,\e)$ and $\frk{I}([0,1],\e)$, let $R_i = \{(x,x):x\in D_i\}$, then we have that $\mathrm{dis}_\Delta(R) \leq \frac{1}{2}\e$. Finally  $R_i$ is clearly an almost correlation so we have that $a_\Delta(\frk{I}(D_i,\e),\allowbreak\frk{I}([0,1],0)) \leq \frac{1}{2}\e$. To see that they are actually equal to $\frac{1}{2}\e$, note that these are the \emph{only} almost correlations between these structures with finite $\mathrm{IU}$-distortion.
\end{proof}

%Let $\Omega_U(\mathrm{Lip})$ be the universal weak modulus for Lipschitz languages\cite{MSA}. $\Omega_U(\mathrm{Lip})$ has the property that every restricted formula in $\mathrm{IU}$ (with are dense in $\mathrm{IU}$ in the uniform norm) eventually respects $\Omega$ for large enough variable indices, so it `should be sensitive enough to capture $\rho_\mathrm{IU}$.' We know that something different must happen with $r^{\mathrm{IU},\Omega_U(\mathrm{Lip})}_\infty$ on these structures, because $r^{\mathrm{IU},\Omega_U(\mathrm{Lip})}_\infty$ is a pseudo-metric.

We know that for any given weak modulus $\Omega$ something different must happen with $r^{\mathrm{IU},\Omega}_\infty$ on these structures, because $r^{\mathrm{IU},\Omega}_\infty$ is a pseudo-metric, so it cannot be equal to $a_{\mathrm{IU}}$, and therefore also cannot be equal to $\rho_{\mathrm{IU}}$. In particular we must have $r^{\mathrm{IU},\Omega}_\infty(\frk{I}(D_0,\e),\frk{I}(D_1,\e)) \leq \e$.

Finally to get an example of $\rho_\Delta(\frk{M},\frk{N})=\infty$ yet $a_\Delta(\frk{M},\frk{N})=0$ for $\frk{M}$ and $\frk{N}$ complete structures, fix a countable dense $D\subseteq [0,1]$ and let
\begin{align*}
\frk{M} &= \bigsqcup_{i<\omega} \frk{I}(D,2^{-i}),\\
\frk{N} &= \frk{I}([0,1],0)\sqcup \frk{M},
\end{align*}
where in a disjoint union the distances between things in different structures are always $1$. It is easy to check that $\frk{M}$ and $\frk{N}$ are separable $\Lcal$-structures.

\begin{prop}
$\rho_\mathrm{IU}(\frk{M},\frk{N})=\infty$ but $a_\mathrm{IU}(\frk{M},\frk{N}) = 0$.
\end{prop}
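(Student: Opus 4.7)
My plan is to prove the two halves separately, using Proposition~\ref{prop:irreg-ex1} to reduce the question about $\mathrm{IU}$-distortion to an exact agreement condition on $U$ together with an ordinary Gromov--Hausdorff-style control on $d$. The easy half is $\rho_\mathrm{IU}(\frk{M},\frk{N})=\infty$; the interesting half is constructing almost correlations of vanishing distortion despite the $U$-values of $\frk{I}([0,1],0)$ being uncountable.

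For $\rho_\mathrm{IU}(\frk{M},\frk{N})=\infty$: suppose, toward a contradiction, that some correlation $R\in\mathrm{cor}(\frk{M},\frk{N})$ has $\mathrm{dis}_\mathrm{IU}(R) < \infty$. By Proposition~\ref{prop:irreg-ex1} every pair $(a,b)\in R$ must satisfy $U^\frk{M}(a) = U^\frk{N}(b)$. However $U^\frk{M}$ takes values only in the countable set $D$ (since each summand $\frk{I}(D,2^{-i})$ has $U$-image $D$), while the $\frk{I}([0,1],0)$ summand of $\frk{N}$ contains points $b$ with $U^\frk{N}(b)\in[0,1]\setminus D$. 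Surjectivity of $R$ on sorts would then force the existence of $a\in\frk{M}$ with $U^\frk{M}(a)\notin D$, a contradiction.

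For $a_\mathrm{IU}(\frk{M},\frk{N})=0$: fix $k<\omega$ and produce an almost correlation $R_k$ with $\mathrm{dis}_\mathrm{IU}(R_k)\le 2^{-k}$. Label the summands $M_i = \frk{I}(D,2^{-i})\subseteq\frk{M}$ and $N_{-1} = \frk{I}([0,1],0)$, $N_i = \frk{I}(D,2^{-i})\subseteq\frk{N}$ for $i\ge 0$. Replace $N_{-1}$ by its dense sub-pre-structure $\frk{I}(D,0)$ and leave the other $N_i$'s and all of $\frk{M}$ untouched; both resulting sub-pre-structures are dense. Define $R_k$ by the following identity-on-$D$ pairings: $M_i\leftrightarrow N_i$ for $i<k$, $M_k\leftrightarrow \frk{I}(D,0)\subseteq N_{-1}$, and $M_{k+j}\leftrightarrow N_{k+j-1}$ for $j\ge 1$. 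This is total and surjective sort-by-sort. Each pair $(a,b)\in R_k$ satisfies $U^\frk{M}(a)=U^\frk{N}(b)$ by construction, and a check of the three types of same-summand correspondences shows the metric is perturbed by at most $2^{-k}$ (the $M_k\leftrightarrow\frk{I}(D,0)$ piece contributes exactly $2^{-k}$, and the shift $M_{k+j}\leftrightarrow N_{k+j-1}$ contributes at most $2^{-(k+j-1)}\le 2^{-k}$). Pairs from different summands on each side have cross-component distance $1$ on both sides, so the cross-component distortion vanishes. Applying Proposition~\ref{prop:irreg-ex1} gives $\mathrm{dis}_\mathrm{IU}(R_k)\le 2^{-k}$, hence $a_\mathrm{IU}(\frk{M},\frk{N})=0$.

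The main subtlety is ensuring the shift is genuinely sort-by-sort total/surjective and that cross-summand pairs behave well, so that doubling up $M_k$ (which is matched both to $N_{-1}$'s dense part and, indirectly, is not matched to $N_k$) does not introduce pairs $(a_1,b_1),(a_2,b_2)\in R_k$ with $a_1,a_2$ in the same $M_i$ but $b_1,b_2$ in different $\frk{N}$-summands. The chosen shift avoids this precisely because each $M_i$ is sent to exactly one summand of $\frk{N}$; verifying this bookkeeping carefully is the only nontrivial calculation.
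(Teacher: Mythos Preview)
Your proof is correct and essentially the same as the paper's. Your almost correlations $R_k$ are exactly the paper's (the paper has a small typo in the shift indexing, writing ``for $i>k$'' where ``for $i\geq k$'' is meant), and both the $\rho_\mathrm{IU}=\infty$ argument and the distortion estimate proceed identically; the paper records the slightly sharper bound $\mathrm{dis}_\mathrm{IU}(R_k)\leq 2^{-k-1}$ coming from the factor $\tfrac{1}{2}$ in $\mathrm{GH}_0$, but your $2^{-k}$ is fine for the conclusion.
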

\begin{proof}
$\rho_\mathrm{IU}(\frk{M},\frk{N})=\infty$ is clear because there are no correlations between $\frk{M}$ and $\frk{N}$ which can correlate the elements of $[0,1]\setminus D$ in $\frk{N}$ to anything in $\frk{M}$ while having finite distortion.

To see that $a_\mathrm{IU}(\frk{M},\frk{N})=0$, for each $k<\omega$, let $R_k\subseteq \frk{M}\times\frk{N}$ be the almost correlation that relates the copies of $\frk{I}(D,2^{-i})$ for $i < k$ isomorphically, which relates $\frk{I}(D,2^{-k}) \subset \frk{M}$ to the subset of $\frk{I}([0,1],0) \subset \frk{N}$ whose points are those in $D$, and which relates $\frk{I}(D,2^{-i-1}) \subset \frk{M}$ to $\frk{I}(D,2^{-i} \subset \frk{N})$ for $i > k$. Then we have that $\mathrm{dis}_\mathrm{IU}(R_k)=\mathrm{dis}_{\mathrm{IU}_0}(R_k)\leq 2^{-k-1}$, so $a_\mathrm{IU}(\frk{M},\frk{N})\leq 2^{-k-1}$ for each $k<\omega$ and $a_\mathrm{IU}(\frk{M},\frk{N}) = 0$.
\end{proof}

This all raises the question of whether or not $a_\Delta = 0$ is an equivalence relation for irregular $\Delta$. In general we have that for any $\Omega$, that $a_\Delta(\frk{M},\frk{N})=0$ implies $r^{\Delta,\Omega}_\infty(\frk{M},\frk{N})=0$, which is an equivalence relation.

\begin{quest}
Is the relation $a_\Delta(\frk{M},\frk{N})=0$ transitive for irregular $\Delta$?
\end{quest}

Given the fact that $a_\Delta$ is not always a pseudo-metric, it would be very surprising if the answer were yes.

%CAN I ANSWER THESE QUESTIONS WITH THIS EXAMPLE?
%\begin{quest}
%Let $\Delta$ be an irregular distortion system for $T$.
%\begin{itemize}
    %\item[(i)] Is $a_\Delta(\frk{M},\frk{N})=0$ transitive?
    %\item[(ii)] When is there always a modulus $\Omega$ such that $r^{\Delta,\Omega}_\infty(\frk{M},\frk{N})=0$ implies $a_\Delta(\frk{M},\frk{N})=0$, for separable $\frk{M},\frk{N}\models T$?
    %\item[(iii)] Does $(\forall \Omega)r^{\Delta,\Omega}_\infty(\frk{M},\frk{N})=0$ ever imply $a_\Delta(\frk{M},\frk{N})=0$, for separable $\frk{M},\frk{N}\models T$?
%\end{itemize}
%\end{quest}
%Note that while $(\forall \Omega)r^{\Delta,\Omega}_\infty(\frk{M},\frk{N})=0$ is certainly an equivalence relation, there's no reason in particular to think that it's Borel in the sense of \cite{MSA}.

%\section*{References}

\bibliographystyle{plain}
\bibliography{../ref}

\begin{thebibliography}{10}

\bibitem{Yaacov2008-ITACFO}
Ita{\"i} Ben~Yaacov.
\newblock Continuous first order logic for unbounded metric structures.
\newblock {\em Journal of Mathematical Logic}, 8(2):197--223, 2008.

\bibitem{OnPert}
Ita{\"i} Ben~Yaacov.
\newblock On perturbations of continuous structures.
\newblock {\em Journal of Mathematical Logic}, 08(02):225--249, 2008.

\bibitem{BYTopo2010}
Ita{\"i} Ben~Yaacov.
\newblock Lipschitz functions on topometric spaces.
\newblock {\em Journal of Logic and Analysis}, 10 2010.

\bibitem{10.2307/23809583}
Ita{\"i} Ben~Yaacov.
\newblock The linear isometry group of the {G}urarij space is universal.
\newblock {\em Proceedings of the American Mathematical Society},
  142(7):2459--2467, 2014.

\bibitem{MTFMS}
Ita{\"i} Ben~Yaacov, Alexander Berenstein, C.~Ward Henson, and Alexander
  Usvyatsov.
\newblock {\em Model theory for metric structures}, volume~2 of {\em London
  Mathematical Society Lecture Note Series}, pages 315--427.
\newblock Cambridge University Press, 2008.

\bibitem{MSA}
Ita\"i {Ben Yaacov}, Michal Doucha, Andr{\'e} Nies, and Todor Tsankov.
\newblock Metric {S}cott analysis.
\newblock {\em Advances in Mathematics}, 318:46 -- 87, 2017.

\bibitem{Carlisle2020}
Sylvia Carlisle and C.~Ward Henson.
\newblock Model theory of {$\mathbb{R}$}-trees.
\newblock {\em Journal of Logic and Analysis}, 12, December 2020.

\bibitem{ComplexDistNew}
Marek C{\'{u}}th, Michal Doucha, and Ond{\v{r}}ej Kurka.
\newblock Complexity of distances: {T}heory of generalized analytic equivalence
  relations.
\newblock {\em Journal of Mathematical Logic}, September 2022.

\bibitem{Goldbring2015}
Isaac Goldbring and Thomas Sinclair.
\newblock Omitting types in operator systems.
\newblock {\em Indiana University Mathematics Journal}, 66, 01 2015.

\bibitem{MetSpaUniv}
James Hanson.
\newblock Metric spaces are universal for bi-interpretation with metric
  structures.
\newblock {\em Annals of Pure and Applied Logic}, page 103204, 2022.

\bibitem{Iovino95stablebanach}
José Iovino.
\newblock Stable {B}anach spaces and {B}anach space structures, {I}:
  Fundamentals.
\newblock In {\em Models, Algebras, and Proofs ({B}ogot\'a, 1995)}, volume 203,
  pages 77--95. Marcel Dekker, 1999.

\bibitem{NanoThesis}
Hernando Tellez.
\newblock {\em Contributions to model theory of metric structures}.
\newblock PhD thesis, University of Illinois at Urbana--Champaign, 2010.

\end{thebibliography}

\end{document}